\newtheorem{Theorem}{Theorem}[section]
\newtheorem{Lemma}[Theorem]{Lemma}
\newtheorem{Corollary}[Theorem]{Corollary}
\newtheorem{Remark}[Theorem]{Remark}
\newtheorem{Hypothesis}{Hypothesis}
\numberwithin{equation}{section}
\begin{document}

\def\le{\left}
\def\r{\right}
\def\cost{\mbox{const}}
\def\a{\alpha}
\def\d{\delta}
\def\ph{\varphi}
\def\e{\epsilon}
\def\la{\lambda}
\def\si{\sigma}
\def\La{\Lambda}
\def\B{{\cal B}}
\def\A{{\mathcal A}}
\def\L{{\mathcal L}}
\def\O{{\mathcal O}}
\def\bO{\overline{{\mathcal O}}}
\def\F{{\mathcal F}}
\def\K{{\mathcal K}}
\def\H{{\mathcal H}}
\def\D{{\mathcal D}}
\def\C{{\mathcal C}}
\def\M{{\mathcal M}}
\def\N{{\mathcal N}}
\def\G{{\mathcal G}}
\def\T{{\mathcal T}}
\def\R{{\mathbb R}}
\def\I{{\mathcal I}}

\def\bw{\overline{W}}
\def\phin{\|\varphi\|_{0}}
\def\s0t{\sup_{t \in [0,T]}}
\def\lt{\lim_{t\rightarrow 0}}
\def\iot{\int_{0}^{t}}
\def\ioi{\int_0^{+\infty}}
\def\ds{\displaystyle}
\def\pag{\vfill\eject}
\def\fine{\par\vfill\supereject\end}
\def\acapo{\hfill\break}

\def\beq{\begin{equation}}
\def\eeq{\end{equation}}
\def\barr{\begin{array}}
\def\earr{\end{array}}
\def\vs{\vspace{.1mm}   \\}
\def\rd{\reals\,^{d}}
\def\rn{\reals\,^{n}}
\def\rr{\reals\,^{r}}
\def\bD{\overline{{\mathcal D}}}
\newcommand{\dimo}{\hfill \break {\bf Proof - }}
\newcommand{\nat}{\mathbb N}
\newcommand{\E}{\mathbb E}
\newcommand{\Pro}{\mathbb P}
\newcommand{\com}{{\scriptstyle \circ}}
\newcommand{\reals}{\mathbb R}

\def\Amu{{A_\mu}}
\def\Qmu{{Q_\mu}}
\def\Smu{{S_\mu}}
\def\H{{\mathcal{H}}}
\def\Im{{\textnormal{Im }}}
\def\Tr{{\textnormal{Tr}}}
\def\E{{\mathbb{E}}}
\def\P{{\mathbb{P}}}
\def\span{{\textnormal{span}}}
\title{On the Smoluchowski-Kramers approximation for a system with  infinite degrees of freedom exposed to a magnetic field}
\author{Sandra Cerrai, Michael Salins\\
\vspace{.1cm}\\
Department of Mathematics\\
 University of Maryland\\
College Park\\
 Maryland, USA
}

\date{}

\maketitle

\begin{abstract}
We study the validity of the so-called Smoluchowski-Kramers approximation for a two dimensional system of stochastic partial differential equations, subject to a constant magnetic field. As the small mass limit does not yield to the solution of the corresponding first order system, we regularize our problem by adding a small friction. We show that in this case the Smoluchowski-Kramers approximation holds. We also give a justification of the regularization, by showing that the regularized problems provide a good approximation to the original ones.
\end{abstract}

\section{Introduction}

We consider here the following two dimensional system of stochastic PDEs

\begin{equation} \label{intro-eq}
\left\{
  \begin{array}{l}
    \ds{\mu \frac{\partial^2 u_{\mu}}{\partial t^2}(\xi,t) = \Delta u_{\mu}(\xi,t) + B(u_\mu(\cdot,t),t)  +\vec{m}\times \frac{\partial u_{\mu}}{\partial t}(\xi,t) + G(u_\mu(\cdot,t),t)\,\frac{\partial  w^{\,Q}}{\partial t}(\xi,t),}\\
    \vs
 \ds{u_\mu(\xi,0) = u_0(\xi), \ \ \ \frac{\partial u_\mu}{\partial t}(\xi,0) = v_0(\xi), \ \ \ \xi \in D,\ \ \ \ \ \ \ u_\mu(\xi,t) = 0, \ \ \xi \in \partial D, }\\
  \end{array} \right.
\end{equation}
where $D$ is a bounded regular domain in $\reals^d$, with $d\geq 1$, $B$ and $G$ are suitable nonlinearities, $\vec{m}=(0,0,m)$ is a constant vector and $w^Q(t,\xi)$ is a cylindrical Wiener process, white in time and colored in space, in the case of space dimension $d>1$.

By Newton's law, the vector field $u_\mu:D\to \reals^2$ models the displacement of a continuum of particles with constant density $\mu>0$ in the region $D \subset \mathbb{R}^d$, in presence of a noisy perturbation and a constant magnetic field $\vec{m}=(0,0,m)$, which is orthogonal to the plane where the motion occurs (in what follows we shall assume just for simplicity of notations $m=1$). For example, if $d=1$ and $D=[0,1]$, this could model the displacement of a one-dimensional string, with fixed endpoints, that can move through two other spacial dimensions, where the Laplacian $\Delta$ models the forces neighboring particles exert on each other, $B$ is some nonlinear forcing, and $\partial w^Q/\partial t$ is a Gaussian random forcing field, whose intensity $G$ may depend on the state $u_\mu$.

In \cite{cf-2006-add} and \cite{cf-2006-mult}, we prove the validity of the  so-called  Smoluchowski-Kramers approximation, in the case the magnetic field is  replaced by a constant friction. Namely, it has been shown that, as $\mu$ tends to $0$, the solutions of the second order system converge to the solution of the first order system which is obtained simply by taking $\mu=0$. Moreover, in \cite {cs1} and \cite{cs2} we have studied the interplay between the Smoluchowski-Kramers approximation and the large deviation principle. In particular, we have shown how some relevant quantities associated with large deviations and exit problems from a basin of attraction for the second order problem can be approximated by the corresponding quantities for the first order problem, in terms of the small mass asymptotics described by the Smoluchowski-Kramers approximation.

\medskip

One might hope that a similar result would be true in the case treated in the present paper. Namely, one would expect that for any $T>0$ and $p\geq 1$
\begin{equation}
\label{lim}
\lim_{\mu\to 0}\,\E\sup_{t \in\,[0,T]}|u_\mu(t)-u(t)|_{ L^2(D;\reals^2)}^p=0,
\end{equation}
where $u(t)$ is the solution of the following system of stochastic PDEs
\begin{equation}
\label{intro-eq2}
\left\{
  \begin{array}{l}
    \ds{\frac{\partial u}{\partial t}(\xi,t) = J_{0}^{-1}\Big[\Delta u(\xi,t) +  B(u(\cdot,t),t) + G(u(\cdot,t),t) \frac{\partial w^{Q}}{\partial t}(\xi,t)\Big] }\\
    \vs
   \ds{u(\xi,t) = 0, \ \ \xi \in \partial D,\ \ \ \ \ \ u(\xi,0) = u_0(\xi),  \xi \in D, }
  \end{array} \right.
\end{equation}
where
\[J_0^{-1}=-\,J_0=\begin{pmatrix} 0 & -1 \\ 1 & 0 \end{pmatrix}.\]

Unfortunately, as shown in \cite{cf-2011} such a limit is not valid, even for finite dimensional analogues of this problem.
Actually, one can prove that if the stochastic term in \eqref{intro-eq} is replaced by a continuous function, then $u_\mu$ would converges uniformly in $[0,T]$ to the solution of \eqref{intro-eq2}. But if we have the white noise term, this is not true anymore. An explanation of this lies in the fact that, while for any continuous function $\varphi(s)$ it holds
\[\lim_{\mu\to 0}\int_0^t\sin (s/\mu)\,\varphi(s)\,ds=0,\]
if we consider a stochastic integral and replace $\varphi(s)ds$ with $dB(s)$, we have
\[\lim_{\mu\to 0}\int_0^t\sin (s/\mu)\,dB(s)\neq 0,\]
since
\[\text{Var}\le(\int_0^t\sin (s/\mu)\,dB(s)\r)=\int_0^t\sin^2(s/\mu)\,ds\to \frac t2,\ \ \ \ \text{as}\ \ \mu\downarrow 0.\]

\medskip

Nevertheless,  problem under consideration can be regularized, in such a way that a counterpart of the Smoluchowski-Kramers approximation is still valid.
To this purpose, there are various ways to regularize the problem. One possible way consists in regularizing the noise (to this purpose, see \cite{cf-2011}  and \cite{jjl} for  the analysis of finite dimensional systems, both in the case of constant and in the case of state dependent magnetic field). Another possible way, which is the one we are using in the present paper, consists  in introducing a small friction proportional to the velocity in equation \eqref{intro-eq}  and considering the regularized problem
\begin{equation} \label{regular-intro}
\left\{
  \begin{array}{l}
    \ds{\mu \frac{\partial^2 u^\e_{\mu}}{\partial t^2}(t) = \Delta u_{\mu}^\e(t) + B(u^\e_\mu(\cdot,t),t)  +\vec{m}\times \frac{\partial u^\e_{\mu}}{\partial t}(t) -\e\,\frac{\partial u^\e_{\mu}}{\partial t}(t)+ G(u^\e_\mu(\cdot,t),t)\frac{\partial w^{Q}}{\partial t}(t),}\\
    \vs
     \ds{u^\e_\mu(0) = u_0, \ \ \ \frac{\partial u^\e_\mu}{\partial t}(0) = v_0, \ \ \ \ \ \ \ u^\e_\mu(\xi,t) = 0, \ \ \xi \in \partial D, }\\
  \end{array} \right.
\end{equation}
which now depends on two small positive parameters $\e$ and $\mu$.
  Our purpose here is showing that, for any fixed $\e>0$, we can take the limit as $\mu$ goes to $0$. Namely, we want to prove that for any $T>0$ and $p\geq1$
\begin{equation}
\label{lim-mu}
\lim_{\mu\to 0}\E\sup_{t \in\,[0,T]}|u^\e_\mu(t)-u_\e(t)|_{L^2(D;\reals^2)}^p=0,\end{equation}
where $u_\e(t)$ is the unique mild solution of the problem
\begin{equation}
\label{limeps}
\left\{
  \begin{array}{l}
    \ds{\frac{\partial u_{\e}}{\partial t}(\xi,t) = \le(J_0+\e\,I\r)^{-1}\Big[ \Delta u_{\e}(\xi,t) + B(u_\e(\cdot,t),t) + G(u_\e(\cdot,t),t) \frac{\partial w^{Q}}{\partial t}(\xi,t) \Big],}\\
    \vs
    \ds{u_\e(\xi,t) = 0, \ \ \xi \in \partial D,\ \ \ \ \ \ u_\e(\xi,0) = u_0(\xi),  \xi \in D, }
  \end{array} \right.
\end{equation}
which is precisely what we get from \eqref{regular-intro} when we put $\mu=0$.

The proof of \eqref{lim-mu} is not at all straightforward. First of all, it requires a thorough analysis of the linear semigroup $S^\e_\mu(t)$ in the space $L^2(D)\times H^{-1}(D)$, associated with the  differential operator
\[A^\e_\mu(u,v)=\frac 1\mu (\mu v,\Delta u-(J_0+\e I)v),\ \ \ \ \ (u,v) \in\,D(A^\e_\mu)=H^1(D)\times L^2(D).\]
Suitable uniform bounds with respect to $\mu$ have to proven in order to prove the convergence in an appropriate sense of the semigroup $S^\e_\mu(t)$ to the semigroup $T_\e(t)$ associated with the linear differential operator  $(J_0+\e I)^{-1}\Delta$ in equation \eqref{limeps}.

Next, as the nonlinearities $B$ and $G$ are assumed to be Lipschitz-continuous, in order to obtain \eqref{lim-mu} the whole point is showing that the stochastic convolution associated with equation \eqref{regular-intro} converges to the stochastic convolution associated with
equation \eqref{limeps}. To this purpose, we have to distinguish the case of additive noise ($G$ constant) and of multiplicative noise ($G$  depending on the state $u$). As a matter of fact, while for additive noise the result is true in any space dimension, for multiplicative noise we are only able to treat the case of space dimension $d=1$ (see also  \cite{cf-2006-mult} for an analogous situation). In both cases, one of the key tools in the proof is the stochastic factorization formula combined with a-priori bounds.

\medskip

Once we have obtained \eqref{lim-mu},  we show that the regularized problems \eqref{regular-intro} and \eqref{limeps} provide a good approximation for the original problems \eqref{intro-eq} and \eqref{intro-eq2}, where the magnetic field is acting in absence of  friction. Thus, we  prove
that for any fixed $\mu>0$ and for any $\e>0$ small enough the solution $u^\e_\mu$ of the regularized system \eqref{regular-intro} is close to the solution of the original system \eqref{intro-eq}.
More precisely,
\begin{equation}
\label{lim-eps-mu}
\lim_{\e\to 0}\E\,\sup_{t \in\,[0,T]}|u^\e_\mu(t)-u_\mu(t)|_{L^2(D;\reals^2)}^p=0,
\end{equation}
and
\begin{equation}
\label{lim-eps-mu-v}
\lim_{\e\to 0}\E\,\sup_{t \in\,[0,T]}\le|\frac{\partial u^\e_\mu}{\partial t}(t)-\frac{\partial u_\mu}{\partial t}(t)\r|_{H^{-1}(D;\reals^2)}^p=0.\end{equation}
In the same way, we prove that
\begin{equation}
\label{lim-eps}
\lim_{\e\to 0}\E\sup_{t \in\,[0,T]}|u_\e(t)-u(t)|_{L^2(D;\reals^2)}^p=0,\end{equation}
where $u(t)$ is the solution of system \eqref{intro-eq2}. To this purpose, we would like to stress that system \eqref{intro-eq2} is not of parabolic type and the semigroup $T_0(t)$ associated with the differential operator
\[J_0^{-1}\Delta(u_1, u_2)=(-\Delta u_2, \Delta u_1)\] is not analytic  in $L^2(D;\reals^2)$ (in fact, it is an isometry). In particular,  equation \eqref{intro-eq2} is not well posed in $L^2(D;\reals^2)$ under the minimal regularity assumptions on the noise required for systems \eqref{intro-eq}, \eqref{regular-intro} and \eqref{limeps} to be well posed and for limit \eqref{lim-mu} to hold. Actually, the noise in system \eqref{intro-eq2} has to be assumed to be taking values in $L^2(D;\reals^2)$ (which means that the covariance of the noise is a trace-class operator).
Moreover, in spite of the fact that both system \eqref{intro-eq} and system \eqref{regular-intro} are well defined under weaker regularity conditions on the noise, limit \eqref{lim-eps-mu} is true only if the covariance is trace-class.

\section{Assumptions and notations}
\label{sec2}

Let us assume that $D$ is a bounded regular domain in $\reals^d$, with $d\geq 1$. In what follows, we shall denote by $H$ the Hilbert space $L^2(D,\reals^2)$, endowed with the scalar product
\[\le<(x_1,y_1),(x_2,y_2)\r>_H=\int_D x_1(\xi)x_2(\xi)\,d\xi+\int_D y_1(\xi)y_2(\xi)\,d\xi,\]
and the corresponding norm $|\cdot|_H$.

Now, let $\hat{A}$ denote the realization of the Laplace operator in $L^2(D)$, endowed with Dirichlet boundary conditions.  Then there exists an orthonormal basis $\{\hat{e}_k\}$ for $L^2(D)$ and a positive sequence $\{\hat{\alpha}_k\}$ such that $\hat{A}\hat{e}_k = -\hat{\alpha}_k \hat{e}_k$, with $0<\hat{\alpha}_1\leq  \hat{\alpha}_k \leq \hat{\alpha}_{k+1}$. Thus, if we define for any $k \in \nat$,
 \[e_{2k-1} = (\hat{e}_k ,0), \ \alpha_{2k} = \hat{\alpha_k},\]
 \[e_{2k} = (0, \hat{e}_k), \ \alpha_{2k+1} = \hat{\alpha_k},\]
 we have that $\{e_k\}_{k=1}^\infty$ is a complete orthonormal basis of $H$. Moreover, if we define
 \[D(A)=D(\hat{A})\times D(\hat{A}),\ \ \ A(x,y)=(\hat{A}x,\hat{A}y),\ \ \ (x,y) \in\,D(A),\]
 we have that
 \[A e_k=-\alpha_k e_k,\ \ \ \ k \in\,\nat.\]

 Next,
for any  $\delta \in \mathbb{R}$, we define $H^\delta$ to be the completion of $C^{\infty}_0(D;\mathbb{R}^2)$ with respect to  the norm
\[|u|_{H^\delta}^2 = \sum_{k=1}^\infty \alpha_k^\delta \left<u, e_k \right>_H^2.\]
Moreover, we define
$\H_\delta := H^\delta \times H^{\delta -1}$, and in the case $\delta=0$ we  simply set   $\H := \H_0$. Finally, for any $(x,y) \in\,\H_\delta$, we  denote
\[\Pi_1(x,y)=x,\ \ \ \ \Pi_2(x,y)=y.\]

The cylindrical Wiener process $w^Q(t)$ is defined as the formal sum
\[w^Q(t) = \sum_{k=1}^\infty Qe_k \beta_k(t),\]
where $Q=(Q_1,Q_2) \in\,{\cal L}(H)$,
 $\{\beta_k\}$ is a sequence of identical, independently distributed one-dimensional, Brownian motions defined on some probability space $(\Omega,\cal{F},\mathbb{P})$ and $\{e_k\}$ is the orthonormal basis of $\H$ introduced above.

Concerning the non-linearity $B$ we assume the following conditions
\begin{Hypothesis} \label{H2}
The mapping  $B:H\times [0,+\infty)\to H$ is measurable. Moreover, for any $T>0$ there exists $\kappa_B(T)>0$ such that
  \[\left|B(x,t) - B(y,t) \right|_H \leq \kappa_B(T)|x-y|_H,\ \ \ \ x,y \in\,H,\ \ \ t \in\,[0,T],\]
  and
  \[\sup_{t \in\,[0,T]}|B(0,t)|_H\leq \kappa_B(T).\]
\end{Hypothesis}
In the case there exists some measurable $b:\reals\times D\times [0,+\infty)\to \reals$ such that for any $x \in\,L^2(D)$ and $t\geq 0$
\[B(x,t)(\xi)=b(x(\xi),\xi,t),\ \ \ \xi \in\,D,\]
then Hypothesis \ref{H2} is satisfied if $b(\cdot,\xi,t):\reals\to\reals$ is Lipschitz continuous and has linear growth, uniformly with respect to $\xi \in\,D$ and $t \in\,[0,T]$, for any $T>0$.

Concerning the diffusion coefficient $G$, we assume the following
\begin{Hypothesis}
\label{H3}
The mapping $G:H\times [0,+\infty)\to{\cal L}(L^\infty(D);H)$ is measurable and for any $T>0$ there exists $\kappa_G(T)>0$ such that
 \[\left|\le[G(x,t) - G(y,t)\r]z \right|_H \leq \kappa_G(T)|x-y|_H|z|_\infty,\ \ \ \ x,y \in\,H,\ \ z \in\,L^\infty(D),\ \ \ t \in\,[0,T],\]
  and
  \[\sup_{t \in\,[0,T]}|G(0,t)z|_H\leq \kappa_G(T)|z|_\infty,\ \ \ \ z \in\,L^\infty(D),\ \ \  t \in\,[0,T].\]
\end{Hypothesis}
In particular, this implies that for any $x,y,z \in\,H$
\begin{equation}
\label{s12}
|[G^\star(x,t)-G^\star(y,t)]z|_{(L^\infty(D))^\prime}\leq \kappa_G(T)|x-y|_H\,|z|_H, \ \ \ \ t \in\,[0,T].
\end{equation}
If for any $x \in\,L^2(D)$ and $z \in\,L^\infty(D)$ we define
\[[G(x,t)z](\xi)=g(x(\xi),\xi,t)z(\xi),\ \ \ \ \ \xi \in\,D,\]
for some measurable
$g:\reals^2 \times D \times [0,+\infty] \to {\cal L}(\reals^2)$, then Hypothesis \ref{H3} is satisfied if
  \[\sup_{\xi \in D}\sup_{t \in [0,T]} | g(x,\xi,t) - g(y,\xi,t)|_{{\cal L}(\reals^2)} \leq \kappa_T |x-y|_{\reals^2}\]
  and that is has linear growth
  \[\sup_{\xi \in D} \sup_{t \in [0,T]} |g(x,\xi,t)|_{{\cal L}(\reals^2)} \leq \kappa_T(1 + |x|_{\reals^2}).\]
Actually, in this case
\[\begin{array}{l}
\ds{|(G(x_1,t) - G(x_2,t))y |_H^2 = \int_D |(g(x_2(\xi),\xi,t) - g(x_2(\xi),\xi,t))y(\xi)|_{\reals^2}^2 d\xi}\\
\vs
\ds{\leq \kappa_T \int_D |x_2(\xi)-x_1(\xi)|_{\reals^2}^2 |y(\xi)|_{\reals^2}^2 d\xi \leq |x_2 -x_1|_H^2 |y|_\infty^2,}
\end{array}
\]
and by the same reasoning
\begin{equation} \label{G-bound}
  |G(x,t)y|_{H} \leq \kappa_T(1+ |x|_H) |y|_\infty.
\end{equation}

\medskip

Now, for any $\mu>0$ and $\d \in\,\reals$, we define on $\H_\d$ the unbounded linear operator
\[A_\mu (u,v)=\frac 1\mu(\mu v,Au-J_0v),\ \ \ \ (u,v) \in\,D(A_\mu)=\H_{\d+1},\]
where
$J_0$ is the skew symmetric $2 \times 2$ matrix
\[J_0 = \begin{pmatrix} 0 & 1 \\ -1 & 0 \end{pmatrix}.\]
It can be proven that $A_\mu$ is the generator of a strongly continuous group of bounded linear operators $\{S_\mu(t)\}_{t\geq 0}$ on each $\H_\d$ (for a proof see \cite[Section 7.4]{pazy}).

Moreover, for any $\mu>0$ we define
\[B_\mu:\H\times [0,+\infty)\to\H,\ \ \ \ \ (z,t) \in\,\H\times [0,+\infty)\mapsto \frac 1\mu(0,B(\Pi_1 z,t)),\]
and
\[G_\mu:\H\times [0,+\infty)\to{\cal L}(L^\infty(D),\H),\ \ \ \ \ (z,t) \in\,\H\times [0,+\infty)\mapsto \frac 1\mu (0,G(\Pi_1 z,t)).\]
With these notations, if we set
\[z_\mu(t)=\le(u_\mu(t),\frac{\partial u_\mu}{\partial t}(t)\r),\]
system \eqref{intro-eq} can be rewritten as the following stochastic equation in the Hilbert space $\H$
\begin{equation}
\label{abstract}
dz_\mu(t)=\le[A_\mu z_\mu(t)+B_\mu(z_\mu(t),t)\r]\,dt+G_\mu(z_\mu(t),t)dw^Q(t),\ \ \ \ \ z_\mu(0)=(u_0,v_0).\end{equation}

\section{The approximating semigroup}
\label{sec3}

In what follows we will consider \eqref{regular-intro}.
For any $\mu,\e>0$ and $\d \in\,\reals$, we define
\[A_{\mu}^{\e}(u,v)=\frac 1\mu(\mu\,v,Au-J_\e v),\ \ \ \ \ (u,v) \in\,D(A_{\mu}^{\e})=\H_{\d+1},\]
where
\[J_\e = J_0 +\e I =  \begin{pmatrix} \e & 1 \\ -1 & \e \end{pmatrix},\ \ \ \ \e>0. \]
As we have seen in the previous section for $A_\mu$, it is possible to prove that for any $\mu, \e>0$ the operator $A_{\mu}^{\e}$ generates a strongly continuous group of bounded linear operators $S_{\mu}^\e(t)$, $t\geq 0$,  on $\H_\d$.

\begin{Lemma}
\label{energy-est-lem}
  For any $(x,y) \in\,\H_\theta$, with $\theta \in\,\reals$, and for any $\mu, \e>0$ let us define
  \[u_{\mu}^\e(t): = \Pi_1 S_\mu^\e(t)(x,y),\ \ \ \ \ \ v_\mu^\e(t):=\Pi_2 S_\mu^\e(t)(x,y).\]
Then
\begin{equation} \label{energy-est-1}
  \mu \left|v_\mu^\e(t) \right|_{H^{\theta-1}}^2 + |u_\mu^\e(t)|_{H^\theta}^2+2\e\int_0^t|v_\mu^\e(s)|_{H^{\theta-1}}^2\,ds
 = \mu|y|_{H^{\theta-1}}^2 + |x|_{H^\theta},
\end{equation}
and
\begin{equation} \label{energy-est-2}
  \mu |u_\mu^\e(t)|_{H^\theta}^2 + \left| \mu v_\mu^\e(t) + J_\e u_\mu^\e(t) \right|_{H^{\theta-1}}^2+2\e\int_0^t|u_\mu^\e(s)|_{H^\theta}^2\,ds
=\mu |x|_{H^\theta}^2 + |\mu y + J_\e x|_{H^{\theta -1}}^2.
\end{equation}
\end{Lemma}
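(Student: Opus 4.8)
The plan is to establish both identities by the standard energy method: differentiate the relevant quantity along the flow of the regularized wave equation and check that the "dissipative" terms combine to give exactly the friction contribution $2\e\int_0^t|\cdot|^2\,ds$, while all the other terms cancel. First I would record that, for smooth initial data $(x,y)\in\H_{\theta+1}=D(A_\mu^\e)$, the pair $(u_\mu^\e,v_\mu^\e)$ solves
\[
\frac{du_\mu^\e}{dt}=v_\mu^\e,\qquad
\mu\,\frac{dv_\mu^\e}{dt}=A u_\mu^\e-J_\e v_\mu^\e
= A u_\mu^\e - J_0 v_\mu^\e - \e\,v_\mu^\e,
\]
in the spaces $H^\theta$ and $H^{\theta-1}$ respectively, the derivatives being classical since $S_\mu^\e(t)$ is a $C_0$-group and the data lie in the domain. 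Once the identities are proven for data in $\H_{\theta+1}$, a density argument extends them to all $(x,y)\in\H_\theta$, since both sides of \eqref{energy-est-1} and \eqref{energy-est-2} are continuous in $(x,y)$ with respect to the $\H_\theta$-norm (here one uses that $S_\mu^\e(t)$ is bounded on $\H_\theta$, that $J_\e$ is bounded on each $H^{\theta-1}$, and that $v_\mu^\e(s)$ depends continuously on the data, uniformly for $s\in[0,t]$, so the time integral passes to the limit).

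For \eqref{energy-est-1}, I would work with the shifted inner products: write $\langle\cdot,\cdot\rangle_{H^{\theta-1}}$ and $\langle\cdot,\cdot\rangle_{H^\theta}$ for the scalar products associated with the norms $|\cdot|_{H^{\theta-1}}$, $|\cdot|_{H^\theta}$, and note the key "integration by parts" relation $\langle A u,v\rangle_{H^{\theta-1}} = -\langle u,v\rangle_{H^\theta}$, which follows at once from the spectral definition $|u|_{H^\delta}^2=\sum_k\alpha_k^\delta\langle u,e_k\rangle_H^2$ together with $Ae_k=-\alpha_k e_k$. Then
\[
\frac{d}{dt}\Big(\mu|v_\mu^\e|_{H^{\theta-1}}^2+|u_\mu^\e|_{H^\theta}^2\Big)
=2\langle \mu\,\dot v_\mu^\e,v_\mu^\e\rangle_{H^{\theta-1}}+2\langle \dot u_\mu^\e,u_\mu^\e\rangle_{H^\theta}
=2\langle A u_\mu^\e-J_0 v_\mu^\e-\e v_\mu^\e,v_\mu^\e\rangle_{H^{\theta-1}}+2\langle v_\mu^\e,u_\mu^\e\rangle_{H^\theta}.
\]
The term $2\langle Au_\mu^\e,v_\mu^\e\rangle_{H^{\theta-1}}$ cancels $2\langle v_\mu^\e,u_\mu^\e\rangle_{H^\theta}$ by the integration-by-parts relation; the term $\langle J_0 v_\mu^\e,v_\mu^\e\rangle_{H^{\theta-1}}$ vanishes because $J_0$ is skew-symmetric (and commutes with $A$, hence is skew on every $H^\delta$); what remains is $-2\e|v_\mu^\e|_{H^{\theta-1}}^2$. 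Integrating in time from $0$ to $t$ gives \eqref{energy-est-1}. (I would also flag the apparent typo $|x|_{H^\theta}$ versus $|x|_{H^\theta}^2$ on the right-hand side.)

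For \eqref{energy-est-2} the natural move is to compute the derivative of $\mu|u_\mu^\e|_{H^\theta}^2+|\mu v_\mu^\e+J_\e u_\mu^\e|_{H^{\theta-1}}^2$. Since $\mu\dot v_\mu^\e+J_\e\dot u_\mu^\e = (Au_\mu^\e-J_\e v_\mu^\e)+J_\e v_\mu^\e = Au_\mu^\e$, the cross term differentiates very cleanly: $\frac{d}{dt}|\mu v_\mu^\e+J_\e u_\mu^\e|_{H^{\theta-1}}^2 = 2\langle Au_\mu^\e,\ \mu v_\mu^\e+J_\e u_\mu^\e\rangle_{H^{\theta-1}}$. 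Meanwhile $\frac{d}{dt}\mu|u_\mu^\e|_{H^\theta}^2 = 2\mu\langle v_\mu^\e,u_\mu^\e\rangle_{H^\theta} = -2\mu\langle Au_\mu^\e,v_\mu^\e\rangle_{H^{\theta-1}}$ by integration by parts again. Adding, the $\mu v_\mu^\e$ contributions cancel and we are left with $2\langle Au_\mu^\e, J_\e u_\mu^\e\rangle_{H^{\theta-1}} = 2\langle Au_\mu^\e,(J_0+\e I)u_\mu^\e\rangle_{H^{\theta-1}}$; the $J_0$ part is $2\langle Au_\mu^\e,J_0 u_\mu^\e\rangle_{H^{\theta-1}}=-2\langle u_\mu^\e,J_0 u_\mu^\e\rangle_{H^\theta}=0$ by skew-symmetry, and the $\e I$ part is $2\e\langle Au_\mu^\e,u_\mu^\e\rangle_{H^{\theta-1}}=-2\e|u_\mu^\e|_{H^\theta}^2$. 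Integrating in time yields \eqref{energy-est-2}.

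The computations themselves are routine once the two structural facts are isolated—skew-symmetry of $J_0$ on each $H^\delta$ and the duality $\langle Au,v\rangle_{H^{\theta-1}}=-\langle u,v\rangle_{H^\theta}$—so the only genuine point requiring care is the justification of differentiating under the norm at the regularity level $\theta$, i.e. making the formal computation rigorous. I expect this to be the main (mild) obstacle: one should either (i) first assume $(x,y)\in\H_{\theta+1}$ so that $t\mapsto S_\mu^\e(t)(x,y)$ is a strict solution and all the manipulations are legitimate classical calculus in the Hilbert spaces $H^\theta$ and $H^{\theta-1}$, then pass to the limit by density and continuity of $S_\mu^\e(t)$ on $\H_\theta$; or (ii) use a Galerkin/spectral truncation, verify the identities for each finite-dimensional projection (where they are finite-dimensional ODE identities), and let the truncation parameter tend to infinity using Parseval. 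Either route closes the argument.
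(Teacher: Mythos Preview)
Your proof is correct and follows essentially the same route as the paper: both argue by differentiating the two energy quantities along the flow, using $\mu\dot v_\mu^\e = Au_\mu^\e - J_\e v_\mu^\e$ together with the duality $\langle Au,v\rangle_{H^{\theta-1}}=-\langle u,v\rangle_{H^\theta}$ and the skew-symmetry of $J_0$, and then integrating in time. Your version is in fact more careful than the paper's, which performs the formal computation without spelling out the density step or the structural identities; your observation that $\mu\dot v_\mu^\e + J_\e\dot u_\mu^\e = Au_\mu^\e$ is exactly the simplification the paper uses for \eqref{energy-est-2}, and your flag of the missing square on $|x|_{H^\theta}$ is accurate.
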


\begin{proof}
Since
\[\frac{\partial u_\mu^\e}{\partial t}(t)=v_\mu^\e(t),\]
we have
\begin{equation}
\label{s1}
\mu \frac{\partial v_\mu^\e}{\partial t} (t) + J_\e v_\mu^\e (t) = A u_\mu^\e(t).\end{equation}
Then, if we take the scalar product of both sides above with $v_\mu^\e$ in $H^{\theta-1}$, we get
\[ \frac 12 \frac d{dt}\le(\mu\,\left| v_\mu^\e(t) \right|_{H^{\theta-1}}^2 + \left| u_\mu^\e(t) \right|_{H^\theta}^2\right)=-\e \left|v_\mu^\e(t) \right|_{H^{\theta-1}}^2,\]
which implies \eqref{energy-est-1}, as
$u_\mu^\e(0) =x$ and $v_\mu^\e(0) =  y$.

  Next, by using again \eqref{s1}, we have
  \[
  \begin{array}{l}
  \ds{\frac{d}{dt} \frac{1}{2} \left| \mu v_\mu^\e(t) + J_\e u_\mu^\e(t)  \right|_{H^{\theta-1}}^2 = \left< \mu v_\mu^\e(t) + J_\e u_\mu^\e(t), A u_\mu^\e(t)  \right>_{H^{\theta -1} }} \\
  \vs
  \ds{ = -\frac{\mu}{2} \frac{d}{dt} \left| u_\mu^\e(t) \right|_{H^\theta}^2 - \e \left| u_\mu^\e(t) \right|_{H^\theta}^2, }
  \end{array}
  \]
  and then, integrating with respect to time, we get \eqref{energy-est-2}.
\end{proof}

Notice that in particular this implies that for any $\mu,\e>0$ there exists $c_{\mu,\e}>0$ such that for any $(x,y) \in\,\H_\theta$
\[\int_0^\infty |S^\e_\mu(t)(x,y)|_{\H_\theta}^2\,dt\leq \frac {c_{\mu,\e}}{2\e}|(x,y)|_{\H_\theta}^2.\]
As a consequence of the Datko theorem (see \cite{ben} for a proof), we can conclude that there exist $M_{\mu,\e},$ and  $\omega_{\mu,\e}>0$ such that
\[\|S_\mu^\e(t)\|_{{\cal L}(\H_\theta)}\leq M_{\mu,\e}\,e^{-\omega_{\mu,\e}t},\ \ \ \ t\geq 0.\]

\begin{Lemma} \label{Smu-bound-lem}
  For any $\mu, \e>0$,  and for any $\theta \in \reals$ and $\gamma \in [0,1]$ it holds
\begin{equation}
\label{s2}
\left|\Pi_1 S_\mu^\e(t)(0,y) \right|_{H^\theta} \leq 2^{\gamma} \mu^{\frac{1+\gamma}{2}}  |y|_{H^{\theta+\gamma-1}},\ \ \ t\geq 0,\ \ \ \ y \in\,H^{\theta+\gamma-1}.\end{equation}
\end{Lemma}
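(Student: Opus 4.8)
The plan is to obtain the estimate \eqref{s2} by spectral decomposition, reducing the second-order linear system to a family of $2\times 2$ constant-coefficient ODEs indexed by the eigenvalues $\alpha_k$ of $-A$, and then to extract the growth of the relevant matrix entry by an elementary (but careful) analysis of the fundamental solution. Concretely, writing $y = \sum_k y_k e_k$ and $u_\mu^\e(t) = \Pi_1 S_\mu^\e(t)(0,y) = \sum_k u_k(t) e_k$, the equation \eqref{s1} together with $\partial_t u_\mu^\e = v_\mu^\e$ decouples on each two-dimensional eigenspace $\span\{e_{2k-1}, e_{2k}\}$, on which $A$ acts as $-\hat\alpha_k I$ and $J_\e = J_0 + \e I$; so the pair $(u_k, \mu v_k)$ solves a linear system $\frac{d}{dt}\binom{u_k}{\mu v_k} = M_k \binom{u_k}{\mu v_k}$ with $u_k(0)=0$, $v_k(0)=y_k$, where $M_k$ is built from $\hat\alpha_k$, $\e$, and $\mu$. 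The quantity to bound is then $|u_\mu^\e(t)|_{H^\theta}^2 = \sum_k \hat\alpha_k^\theta |u_k(t)|_{\reals^2}^2$ (grouping the two components per $k$), and it suffices to show $|u_k(t)|_{\reals^2} \le 2^\gamma \mu^{(1+\gamma)/2} \hat\alpha_k^{(\gamma-1)/2} |y_k|_{\reals^2}$ uniformly in $t\ge 0$ and then sum against the weights.

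The key step is the pointwise bound on $u_k(t)$. Here I would solve the scalar complexified problem: since $J_0$ is a rotation by $90^\circ$, on the complexification of each eigenspace $J_\e$ acts as multiplication by $\e \pm i$, and $u_k(t)$ is a combination of $e^{\lambda_\pm t}$ where $\lambda_\pm$ are the roots of $\mu \lambda^2 + (\e \mp i)\lambda + \hat\alpha_k = 0$. Because of the energy identity \eqref{energy-est-1}, both roots have nonpositive (in fact strictly negative) real part, so no exponential growth occurs and the supremum over $t$ is attained through the transient behaviour. The component $u_k(t)$ generated by the initial condition $(0, y_k)$ equals $y_k \cdot \mu (e^{\lambda_+ t} - e^{\lambda_- t})/(\mu(\lambda_+ - \lambda_-))$, i.e. $y_k (e^{\lambda_+ t}-e^{\lambda_- t})/(\lambda_+ - \lambda_-)$. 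One then estimates $|e^{\lambda_+ t}-e^{\lambda_- t}|/|\lambda_+-\lambda_-| \le \min\{ t \cdot \sup |e^{\lambda t}|, \ 2/|\lambda_+-\lambda_-|\} $ — the first bound via $e^{\lambda_+ t}-e^{\lambda_- t} = (\lambda_+-\lambda_-)\int_0^t e^{\lambda_+ s}e^{\lambda_-(t-s)}\,ds$ (giving the $\mu^{1}$-type scaling when combined with the second-order structure), the second bound trivial. Interpolating these two bounds with exponent $\gamma$ and $1-\gamma$, and using the rough root separation estimate $|\lambda_+ - \lambda_-| \gtrsim \sqrt{\hat\alpha_k/\mu}$ valid in the relevant regime (or the complementary regime where both roots are comparable and the argument is even easier), yields exactly a factor $(\mu/\hat\alpha_k)^{(1-\gamma)/2}$ times a factor of order $\mu^{\gamma}$... and tracking the constants produces the claimed $2^\gamma \mu^{(1+\gamma)/2}\hat\alpha_k^{(\gamma-1)/2}$. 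Summing $\hat\alpha_k^\theta$-weighted squares over $k$ gives $|u_\mu^\e(t)|_{H^\theta}^2 \le 4^\gamma \mu^{1+\gamma}\sum_k \hat\alpha_k^{\theta+\gamma-1}|y_k|^2 = 4^\gamma \mu^{1+\gamma}|y|_{H^{\theta+\gamma-1}}^2$, which is \eqref{s2}.

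The main obstacle is the case analysis for the roots $\lambda_\pm$: depending on whether $\mu \hat\alpha_k$ is small or large relative to $1$ (equivalently, whether the discriminant $(\e\mp i)^2 - 4\mu\hat\alpha_k$ is dominated by its real or its $-4\mu\hat\alpha_k$ part), the two roots are either both of size $O(1)$ with one near $-\hat\alpha_k$ and one near $-1/\mu$ (overdamped-like regime), or a complex conjugate-ish pair of modulus $\approx \sqrt{\hat\alpha_k/\mu}$ (underdamped regime); the separation $|\lambda_+ - \lambda_-|$ and the $\sup_t$ behave differently in the two, and one must check the interpolated bound holds with the stated constant $2^\gamma$ in both. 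A cleaner route that avoids the explicit root formulae is to go back to the energy identities: \eqref{energy-est-1} applied with $x=0$ gives $|u_\mu^\e(t)|_{H^\theta}^2 \le \mu |y|_{H^{\theta-1}}^2$, which is the $\gamma=1$ endpoint of \eqref{s2} (up to the constant $2$); and a similar energy computation for $\Pi_1 S_\mu^\e$ tested differently, or a direct integration $u_\mu^\e(t) = \int_0^t v_\mu^\e(s)\,ds$ combined with Cauchy--Schwarz and \eqref{energy-est-1}, gives $|u_\mu^\e(t)|_{H^{\theta-1}} \le \int_0^t |v_\mu^\e(s)|_{H^{\theta-1}}\,ds \le \sqrt{t}\,(\int_0^t|v_\mu^\e|_{H^{\theta-1}}^2)^{1/2} \le \sqrt{t}\cdot \sqrt{\mu}|y|_{H^{\theta-1}}/\sqrt{2\e}$ — but the $\e$-dependence there is undesirable, so one instead bounds $\int_0^t |v_\mu^\e(s)|_{H^{\theta-1}}^2\,ds$ directly from the $u$-equation without invoking the friction term. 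Whichever of these two endpoints ($\gamma=0$ and $\gamma=1$) one can establish cleanly, the intermediate $\gamma\in(0,1)$ follows by interpolation between the corresponding Hilbert-space norms, and I expect the write-up to proceed along exactly these lines, with the $\gamma=0$ endpoint being the delicate one.
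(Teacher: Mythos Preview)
Your interpolation strategy is exactly the one the paper uses, and the endpoint you obtain from \eqref{energy-est-1} (namely $|u_\mu^\e(t)|_{H^{\theta}}^2 \le \mu |y|_{H^{\theta-1}}^2$) is correct --- though note this is the $\gamma=0$ case of \eqref{s2}, not $\gamma=1$. The substantive gap is the other endpoint, which you correctly flag as ``the delicate one'' but do not resolve: your Cauchy--Schwarz attempt via $u=\int_0^t v$ introduces an unwanted $\e^{-1/2}$, and the spectral root analysis is left as a case split whose constants you do not verify.

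The paper's clean fix is exactly the ``energy computation tested differently'' you gesture at: it is the \emph{second} energy identity \eqref{energy-est-2}. With $x=0$ that identity gives, at level $\theta' = \theta+\gamma$,
\[
\left|\mu v_\mu^\e(t) + J_\e u_\mu^\e(t)\right|_{H^{\theta'-1}}^2 \le \mu^2 |y|_{H^{\theta'-1}}^2,
\]
and combining this with $\mu^2 |v_\mu^\e(t)|_{H^{\theta'-1}}^2 \le \mu \cdot \mu |y|_{H^{\theta'-1}}^2$ from \eqref{energy-est-1} yields
\[
|u_\mu^\e(t)|_{H^{\theta'-1}}^2 \le |J_\e u_\mu^\e(t)|_{H^{\theta'-1}}^2 \le 2\,\mu^2|y|_{H^{\theta'-1}}^2 + 2\,\mu^2|y|_{H^{\theta'-1}}^2 = 4\mu^2|y|_{H^{\theta'-1}}^2,
\]
with no $\e$-dependence in the constant. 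Interpolating $|u|_{H^\theta}\le |u|_{H^{\theta+\gamma-1}}^{\gamma}|u|_{H^{\theta+\gamma}}^{1-\gamma}$ between this bound and the one from \eqref{energy-est-1} gives precisely $2^{\gamma}\mu^{(1+\gamma)/2}|y|_{H^{\theta+\gamma-1}}$. So the missing ingredient is not a new idea but simply \eqref{energy-est-2}, which was proved alongside \eqref{energy-est-1} in the preceding lemma; once you use it, the explicit characteristic-root analysis becomes unnecessary.
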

\begin{proof}
  Let $u_\mu^\e(t):= \Pi_1 S_\mu^\e(t)(0,y)$.  By \eqref{energy-est-1},
  \[\left|u_\mu^\e(t) \right|_{H^{\theta+\gamma}}^2 \leq \mu |y|_{H^{\theta-1}}^2.\]
  By \eqref{energy-est-2} and \eqref{energy-est-1},
  \[\begin{array}{l}
  \ds{\left|u_\mu^\e(t)\right|_{H^{\theta + \gamma -1}}^2 \leq (1+ \e^2) \left|u_\mu^\e(t) \right|_{H^{\theta + \gamma -1}}^2 = \left| J_\e u_\mu^\e(t) \right|_{H^{\theta + \gamma-1}}^2 }\\
  \vs
  \ds{\leq 2 \left| \mu \frac{\partial u_\mu^\e}{\partial t}(t) + J_\e u_\mu^\e(t) \right|_{H^{\theta + \gamma-1}}^2 +2 \mu^2 \left| \frac{\partial u_\mu^\e}{\partial t}(t)  \right|_{H^{\theta + \gamma -1}}^2\leq 4 \mu^2  |y|_{H^{\theta + \gamma - 1}}^2. }\end{array}  \]
Then, since for any $x \in H^{\theta+ \gamma}$
  \[|x|_{H^\theta} \leq |x|_{H^{\theta+ \gamma-1}}^{\gamma}|x|_{H^{\theta+ \gamma}}^{1-\gamma},\]
we conclude that
  \[|u_\mu^\e(t)|_{H^\theta} \leq |u_\mu^\e(t)|_{H^{\theta+\gamma-1}}^\gamma |u_\mu^\e(t)|_{H^{\theta+ \gamma}}^{1-\gamma} \leq 2^{\gamma} \mu^{\frac{1+\gamma}{2}} |y|_{H^{\theta+\gamma-1}}.\]
\end{proof}

Now, for any $\mu>0$ we define the bounded linear operator
\[Q_\mu:H\to \H,\ \ \ \ x \in\,H\mapsto \frac1\mu(0,Qx) \in\,\H.\]

\begin{Lemma}
\label{l1}
Assume that there exists a non-negative sequence $\{\la_k\}_{k \in\,\nat}$ such that
\[Qe_k=\la_k e_k,\ \ \ \ k \in\,\nat.\]
Then,
for any $0<\delta<1$ and $\e>0$ there exists a constant $c= c(\e, \d)>0$ such that for any $k \in \nat$ and $\theta>0$
  \begin{equation} \label{bound-1}
    \sup_{\mu>0} \int_0^\infty s^{-\d} \left| \Pi_1 S_\mu^\e(s) Q_\mu e_k \right|_H^2 ds \leq c\, \frac{\la_k^2}{\alpha_k^{1-\d}},
  \end{equation}
  and
  \begin{equation} \label{bound-3}
    \sup_{\mu>0} \mu^{1+ \d} \int_0^\infty s^{-\d} \left| \Pi_2 S_\mu^\e(s) Q_\mu e_k \right|_{H^{\theta-1}}^2 ds \leq c\, \frac{\la_k^2}{\alpha_k^{1-\theta}}.
  \end{equation}
\end{Lemma}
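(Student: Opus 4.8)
The plan is to reduce both estimates to the (essentially scalar) dynamics generated on the $k$-th eigenmode and then to deduce each bound by interpolating between a $\mu$-uniform pointwise estimate and a bound on the time integral, both of which are immediate from Lemma \ref{energy-est-lem} and Lemma \ref{Smu-bound-lem}. First, since $Qe_k=\la_k e_k$, we have $Q_\mu e_k=\frac{\la_k}{\mu}(0,e_k)$, so that, writing $u_\mu^\e(s):=\Pi_1 S_\mu^\e(s)(0,e_k)$ and $v_\mu^\e(s):=\Pi_2 S_\mu^\e(s)(0,e_k)$, we get $\Pi_1 S_\mu^\e(s)Q_\mu e_k=\frac{\la_k}{\mu}u_\mu^\e(s)$ and $\Pi_2 S_\mu^\e(s)Q_\mu e_k=\frac{\la_k}{\mu}v_\mu^\e(s)$. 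Note also that $(0,e_k)\in\H_\theta$ for every $\theta\in\reals$, since $|e_k|_{H^{\theta-1}}^2=\alpha_k^{\theta-1}<\infty$. Thus \eqref{bound-1} is equivalent to $\sup_{\mu>0}\int_0^\infty s^{-\d}|u_\mu^\e(s)|_H^2\,ds\le c\,\mu^2\alpha_k^{\d-1}$, and \eqref{bound-3} to $\sup_{\mu>0}\mu^{\d-1}\int_0^\infty s^{-\d}|v_\mu^\e(s)|_{H^{\theta-1}}^2\,ds\le c\,\alpha_k^{\theta-1}$.

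The key elementary tool is the following interpolation inequality: if $f:[0,\infty)\to[0,\infty)$ satisfies $f\le A$ a.e.\ and $\int_0^\infty f\,ds\le B$, then for every $\d\in(0,1)$
\[\int_0^\infty s^{-\d}f(s)\,ds\ \le\ \frac{2-\d}{1-\d}\,A^{\d}B^{1-\d},\]
which follows by splitting the integral at $s=B/A$ and using $\int_0^{B/A}s^{-\d}f\,ds\le A(B/A)^{1-\d}/(1-\d)=A^{\d}B^{1-\d}/(1-\d)$ together with $\int_{B/A}^\infty s^{-\d}f\,ds\le(B/A)^{-\d}B=A^{\d}B^{1-\d}$. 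Everything then reduces to choosing the correct $A$ and $B$ in the two cases.

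For \eqref{bound-1}, I would take $A:=4\mu^2$, which is legitimate by Lemma \ref{Smu-bound-lem} applied with $\theta=0$ and $\gamma=1$, giving $|u_\mu^\e(s)|_H\le 2\mu|e_k|_H=2\mu$ uniformly in $\mu$ and $s$; and $B:=\mu^2/(2\e\alpha_k)$, which is legitimate by the energy identity \eqref{energy-est-2} with $\theta=0$ and datum $(0,e_k)$, since it yields $2\e\int_0^\infty|u_\mu^\e(s)|_H^2\,ds\le\mu^2|e_k|_{H^{-1}}^2=\mu^2\alpha_k^{-1}$. The interpolation inequality then gives $\int_0^\infty s^{-\d}|u_\mu^\e(s)|_H^2\,ds\le c(\d,\e)\mu^2\alpha_k^{\d-1}$, which after multiplication by $\la_k^2/\mu^2$ is exactly \eqref{bound-1}. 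For \eqref{bound-3}, both $A$ and $B$ come from \eqref{energy-est-1} at level $\theta$ with datum $(0,e_k)$: it gives $\mu|v_\mu^\e(s)|_{H^{\theta-1}}^2\le\mu\alpha_k^{\theta-1}$, hence $A:=\alpha_k^{\theta-1}$, and $2\e\int_0^\infty|v_\mu^\e(s)|_{H^{\theta-1}}^2\,ds\le\mu\alpha_k^{\theta-1}$, hence $B:=\mu\alpha_k^{\theta-1}/(2\e)$. The interpolation inequality gives $\int_0^\infty s^{-\d}|v_\mu^\e(s)|_{H^{\theta-1}}^2\,ds\le c(\d,\e)\mu^{1-\d}\alpha_k^{\theta-1}$, and multiplying by $\mu^{\d-1}\la_k^2$ produces $c\,\la_k^2\alpha_k^{\theta-1}$, which is \eqref{bound-3}; all constants are independent of $\mu$, $k$ and $\theta$.

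The only delicate point is the $\mu$-uniform pointwise control of $u_\mu^\e$ in the proof of \eqref{bound-1}. The energy identities alone only give $|u_\mu^\e(s)|_H^2\le\mu\alpha_k^{-1}$, and inserting this bound into the interpolation inequality leaves a spurious factor $\mu^{-\d}$, which is harmless only in the regime $\mu\alpha_k\gtrsim 1$ but blows up as $\mu\to 0$ for fixed $k$. It is precisely the gain of a full power of $\mu$ in Lemma \ref{Smu-bound-lem} at the endpoint $\gamma=1$ — available because the datum $(0,e_k)$ lies entirely in the velocity component — that makes the estimate uniform in $\mu$. Once this is in place, the remainder is only bookkeeping of the powers of $\mu$ and $\alpha_k$.
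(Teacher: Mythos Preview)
Your proof is correct and is essentially the paper's argument repackaged: both estimates are obtained by splitting the time integral at a crossover time and combining a $\mu$-uniform pointwise bound with an $L^1$-in-time bound, using exactly the same ingredients (Lemma~\ref{Smu-bound-lem} with $\gamma=1$ and the two energy identities \eqref{energy-est-1}, \eqref{energy-est-2}). The only difference is cosmetic: you abstract the splitting as the interpolation inequality $\int_0^\infty s^{-\d}f\,ds\le c(\d)A^\d B^{1-\d}$, whose optimal cut $s=B/A$ reproduces (up to $\e$-dependent constants) the paper's explicit split points $\alpha_k^{-1}$ for \eqref{bound-1} and $\mu$ for \eqref{bound-3}.
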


\begin{proof}
We have
\[\int_0^\infty s^{-\d} \left| \Pi_1 S_\mu^\e(s) Q_\mu e_k \right|_H^2 ds=\int_0^{\a_k^{-1}} s^{-\d} \left| \Pi_1 S_\mu^\e(s) Q_\mu e_k \right|_H^2 ds+
\int_{\a_k^{-1}}^\infty s^{-\d} \left| \Pi_1 S_\mu^\e(s) Q_\mu e_k \right|_H^2 ds.\]
Due to \eqref{s2}, with $\theta=0$ and $\gamma=1$, we have
\begin{equation}
\label{s3}
\int_0^{\a_k^{-1}} s^{-\d} \left| \Pi_1 S_\mu^\e(s) Q_\mu e_k \right|_H^2 ds\leq 4\,|Q e_k|_H^2\int_0^{\a_k^{-1}} s^{-\d}\,ds=\frac{4}{1-\d}\frac {\la_k^2}{\a_k^{1-\d}}.\end{equation}
Moreover, due to \eqref{energy-est-2} we have
\[\int_{\a_k^{-1}}^\infty s^{-\d} \left| \Pi_1 S_\mu^\e(s) Q_\mu e_k \right|_H^2 ds\leq \frac1{2\e} \a_k^{-\d}|Q e_k|_{H^{-1}}^2=\frac1{2\e}\frac{\la_k^2}{\a_k^{1-\delta}}.\]
Together with \eqref{s3} this implies \eqref{bound-1}.

  To establish \eqref{bound-3}, we write
 \[\int_0^\infty s^{-\d} \left| \Pi_2 S_\mu^\e(s) Q_\mu e_k \right|_{H^{\theta-1}}^2 ds=\int_0^\mu s^{-\d} \left| \Pi_2 S_\mu^\e(s) Q_\mu e_k \right|_{H^{\theta-1}}^2 ds+\int_\mu^\infty s^{-\d} \left| \Pi_2 S_\mu^\e(s) Q_\mu e_k \right|_{H^{\theta-1}}^2 ds.\]
Thanks to \eqref{energy-est-1} we have
  \[\int_0^\mu s^{-\d} \left| \Pi_2 S_\mu^\e(s) Q_\mu e_k \right|_{H^{\theta-1}}^2 ds\leq \frac 2{\mu^2}|Qe_k|_{H^{\theta-1}}^2\int_0^\mu s^{-\d}\,ds= \frac1{\mu^{1+\d}}\frac{2}{1-\d}\frac{\lambda_k^2}{\alpha_k^{1-\theta}}, \]
  and
  \[\int_\mu^\infty s^{-\d} \left| \Pi_2 S_\mu^\e(s) Q_\mu e_k \right|_{H^{\theta-1}}^2 ds\leq \frac 1{2\e}\mu^{-\d}\frac1{\mu}|Qe_k|_{H^{\theta-1}}^2=
\frac 1{2\e}  \frac1{\mu^{1+\d}}\frac{\lambda_k^2}{\alpha_k^{1-\theta}},\]
and these two estimates together imply \eqref{bound-3}.
\end{proof}

Now, for any $\e>0$ we define
\[A_\e:=J_\e^{-1} A=\frac{1}{1+\e^2}\begin{pmatrix} \e  & -1 \\ 1 & \e \end{pmatrix}\hat{A},\]
and we denote by $T_\e(t)$, $t\geq 0$, the strongly continuous semigroup generated by $A_\e$ in $H^\theta$, for any $\theta \in\,\reals$.
Moreover, we denote
\[Q_\e=J_\e^{-1}Q.\]

\begin{Lemma}
\label{l7}
We have
  \begin{equation} \label{Se-bound}
    \|T_\e(t) \|_{{\cal L}(H^\theta)}  \leq e^{-\frac{\e\a_1}{1+\e^2}t},\ \ \ \ \ t\geq 0.
  \end{equation}

 Moreover, if there exists a non-negative sequence $\{\la_k\}_{k \in\,\nat}$ such that
\[Qe_k=\la_k e_k,\ \ \ \ k \in\,\nat,\]
then, for any $0<\d<1$ and $\e>0$ there exists a constant $c=c(\d,\e)$ such that for any $k \in\,\nat$
  \begin{equation} \label{bound-2}
    \int_0^\infty s^{-\d} \left| T_\e(s) Q_\e e_k \right|_H^2 ds \leq c \frac{\lambda_k^2}{\alpha_k^{1-\d}}.
  \end{equation}
  Finally, for any  $k \in\,\nat$
  \begin{equation}
  \label{s45}
    \int_0^T s^{-\d} \left| T_\e(s) Q_\e e_k \right|_H^2 ds \leq \frac{1}{1-\d}\,T^{1-\d}\,\lambda_k^2.
    \end{equation}

\end{Lemma}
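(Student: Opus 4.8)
The plan is to diagonalize the problem: $T_\e(t)$ leaves invariant every two‑dimensional eigenspace of $\hat A$ and acts on it as an exponentially damped rotation, and all three estimates then become elementary. Fix $j\in\nat$ and let $V_j=\span\{(\hat e_j,0),(0,\hat e_j)\}$, the eigenspace of $\hat A$ relative to $\hat\a_j$; this is the span of two consecutive vectors among $\{e_k\}$. On $V_j$ the operator $A$ is scalar multiplication by $-\hat\a_j$, whereas $J_\e^{-1}$ is multiplication by the matrix $(1+\e^2)^{-1}(\e I+R)$, with $R=\bigl(\begin{smallmatrix}0&-1\\ 1&0\end{smallmatrix}\bigr)$; hence $A_\e=J_\e^{-1}A$ acts on $V_j$ as $-\frac{\hat\a_j}{1+\e^2}(\e I+R)$. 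Since $\e I$ commutes with $R$ and $e^{\psi R}$ is, for every $\psi\in\reals$, a rotation (hence orthogonal), I would conclude
\[T_\e(t)|_{V_j}=\exp\!\Big(-\tfrac{\hat\a_j t}{1+\e^2}(\e I+R)\Big)=e^{-\frac{\e\hat\a_j}{1+\e^2}t}\,e^{-\frac{\hat\a_j t}{1+\e^2}R},\]
so that $|T_\e(t)w|_H=e^{-\frac{\e\hat\a_j}{1+\e^2}t}|w|_H$ for every $w\in V_j$.

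Granting this, \eqref{Se-bound} follows at once: on $V_j$ the $H^\theta$‑norm equals $\hat\a_j^{\theta/2}$ times the $H$‑norm, so $T_\e(t)|_{V_j}$ is, in $H^\theta$ as well, the isometry above rescaled by $e^{-\frac{\e\hat\a_j}{1+\e^2}t}$; since the $V_j$ are mutually orthogonal in $H^\theta$ and together span a dense subspace, $\|T_\e(t)\|_{\mathcal L(H^\theta)}$ equals $\sup_j e^{-\frac{\e\hat\a_j}{1+\e^2}t}$, and $\hat\a_j\geq\hat\a_1=\a_1$ gives the claimed bound (in fact with equality).

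For \eqref{bound-2} and \eqref{s45}, I would note that $Q_\e e_k=J_\e^{-1}Qe_k=\la_k J_\e^{-1}e_k\in V_j$ with $\hat\a_j=\a_k$, and that $|J_\e^{-1}e_k|_H=(1+\e^2)^{-1/2}$ because $(1+\e^2)^{-1/2}(\e I+R)$ is orthogonal. Hence $|T_\e(s)Q_\e e_k|_H^2=\frac{\la_k^2}{1+\e^2}\,e^{-\frac{2\e\a_k}{1+\e^2}s}$, and substituting this into the integral, with the elementary identity $\int_0^\infty s^{-\d}e^{-\beta s}\,ds=\Gamma(1-\d)\,\beta^{\d-1}$ (valid for $0<\d<1$) applied at $\beta=\frac{2\e\a_k}{1+\e^2}$, yields \eqref{bound-2} with the explicit constant $c(\d,\e)=\Gamma(1-\d)\,(2\e)^{\d-1}(1+\e^2)^{-\d}$. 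Finally, bounding $e^{-\frac{2\e\a_k}{1+\e^2}s}\le 1$ and $(1+\e^2)^{-1}\le 1$ and integrating $s^{-\d}$ over $[0,T]$ gives \eqref{s45}.

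I do not expect a genuine obstacle: the whole content is the block decomposition together with the observation that the skew part of each block generator is a rotation generator, so that $T_\e$ is, block by block, an exponentially damped rotation. The only points that actually need verification are the commutation $[\e I,R]=0$, the orthogonality of $(1+\e^2)^{-1/2}(\e I+R)$, and the restriction $\d<1$ ensuring convergence of the Gamma integral — all routine.
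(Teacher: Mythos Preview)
Your proof is correct and proceeds essentially as the paper's does: both hinge on the fact that $T_\e(t)$ acts on each two-dimensional eigenspace of $A$ as an exponentially damped rotation, yielding the exact formula $|T_\e(s)Q_\e e_k|_H=\frac{\lambda_k}{\sqrt{1+\e^2}}\,e^{-\frac{\e\a_k}{1+\e^2}s}$, from which all three bounds follow. The only cosmetic differences are that the paper derives \eqref{Se-bound} by an energy estimate (differentiating $|T_\e(t)x|_{H^\theta}^2$) rather than by explicit block diagonalization, and handles \eqref{bound-2} by splitting the integral at $s=\a_k^{-1}$ instead of invoking the Gamma identity --- your route is slightly more direct and produces an explicit constant.
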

\begin{proof}
  Let $x \in H^\theta$, and $u_\e(t) = T_\e(t)x.$ This means
\[\frac{\partial u_\e}{\partial t}(t) =  A_\e u_\e(t) = \frac{1}{1+ \e^2}
\begin{pmatrix}
  \e & -1 \\ 1 & \e
\end{pmatrix}\hat{A} u_\e(t)\]
  Then, if we take the scalar product in $H^\theta$ of the above equation by $u_{\e}(t)$
  \[\begin{array}{l}
  \ds{\frac d{dt}\le(|u_{\e}(t)|_{H^\theta}^2\r)=-\frac {2\e}{1+\e^2}\le(|u_{\e}(t)|_{H^{1+\theta}}^2\r)}\\
  \vs
\ds{\leq -\frac {2\e\a_1}{1+\e^2}\le(|u_{\e}(t)|_{H^{\theta}}^2\r),}
\end{array}
\]
and this implies \eqref{Se-bound}.

In  order to prove \eqref{bound-2}, we observe that if $u_\e(t) = T_\e(t) Q_\e e_k$, then
\[\frac{\partial u_\e}{\partial t}(t) = -\alpha_k J_\e^{-1} u_\e(t).\]
By the same arguments that we used above,
\[|u_\e(t)|_H = e^{-\frac{\e \a_k}{\e^2+1}t} |Q_\e e_k|_H = \frac{\lambda_k}{\sqrt{\e^2+1}} e^{-\frac{\e \a_k}{\e^2+1}t},\]
and therefore,
\[ \begin{array}{l}
\ds{   \int_0^\infty s^{-\d} \left| T_\e(s) Q_\e e_k \right|_H^2 ds \leq \frac {\la_k^2}{(1+\e^2)}\int_0^\infty s^{-\d} e^{-\frac {2\e\a_k}{1+\e^2} s} ds}\\
\vs
\ds{  \leq \frac{\la_k^2}{\e^2+1}\le(\int_0^{\a_k^{-1}} s^{-\d} ds+  \a_k^\d \int_{\a_k^{-1}}^\infty e^{-\frac {2\e\a_1}{1+\e^2} s} ds \r) \leq \frac{\la_k^2}{\e^2+1}\le(\frac 1{1-\d}\a_k^{\d-1}+\a_k^\d\frac {1+\e^2}{2\e\a_k}\r)\leq c(\a,\e)\, \frac{\lambda_k^2}{\alpha_k^{1-\d}}.}
\end{array}\]

Finally, in order to prove \eqref{s45}, we notice that
\[ \int_0^T s^{-\d} \left| T_\e(s) Q_\e e_k \right|_H^2 ds\leq \la_k^2\int_0^T s^{-\d}\,ds=\frac{T^{1-\d}}{1-\d}\,\la_k^2.\]

\end{proof}

In view of the previous estimates for $S_\mu^\e(t)$ and $T_\e(t)$, we can prove the following convergence result.
\begin{Theorem}
  For any $\e >0$, $0<t_0<T$, and   $n \in \nat$,
  \begin{equation} \label{Smu-to-Se-1}
    \lim_{\mu \to 0} \sup_{t \leq T} \sup_{|x|_H\leq 1} \left| \Pi_1 S_\mu^\e(t)(P_n x,0) - T_\e(t) P_n x \right|_{H} =0,
  \end{equation}
  and
  \begin{equation} \label{Smu-to-Se-3}
    \lim_{\mu \to 0} \sup_{0<t_0 \leq t\leq T} \sup_{|y|_H \leq 1} \left|\frac{1}{\mu} \Pi_1 S_\mu^\e(t)(0,P_n y) - T_\e(t)J_\e^{-1} P_n y \right|_H = 0,
  \end{equation}
  where $P_n$ is the projection of $H$ onto the $n$-dimensional subspace $H_n:=\text{\em{span}}\{e_1,\ldots,e_{2n}\}$.
\end{Theorem}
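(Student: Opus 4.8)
The plan is to reduce the statement to an explicitly solvable finite-dimensional singular perturbation problem. Since $Ae_{2k-1}=-\hat\alpha_ke_{2k-1}$, $Ae_{2k}=-\hat\alpha_ke_{2k}$ and the matrix $J_\e=J_0+\e I$ maps $\mathrm{span}\{e_{2k-1},e_{2k}\}$ into itself, the finite-dimensional subspace $H_n\times H_n$ is invariant under the generator $A_\mu^\e$, hence under the group $S_\mu^\e(t)$, while $H_n$ is invariant under $A_\e=J_\e^{-1}A$, hence under $T_\e(t)$; on these subspaces the (semi)groups are genuine matrix exponentials. As the suprema over $|x|_H\le1$ (resp. $|y|_H\le1$) in \eqref{Smu-to-Se-1}--\eqref{Smu-to-Se-3} are just operator norms of differences of endomorphisms of $H_n$, and $H_n=\bigoplus_{k=1}^nW_k$ orthogonally with $W_k:=\mathrm{span}\{e_{2k-1},e_{2k}\}$, and since $A_\mu^\e,S_\mu^\e(t)$ leave every $V_k:=W_k\times W_k$ invariant while $A_\e,T_\e(t)$ leave every $W_k$ invariant, it suffices to prove, for each fixed $k\le n$, the corresponding convergence in norm on the four-dimensional space $V_k$, uniformly for $t\in[0,T]$ (resp. $t\in[t_0,T]$), and then take the maximum over the finitely many $k$. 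On $V_k$ the operator $A_\mu^\e$ acts in the coordinates $(u,v)$ through $\begin{pmatrix}0&I\\ -\hat\alpha_k\mu^{-1}I& -\mu^{-1}J_\e\end{pmatrix}$ and $A_\e$ through $-\hat\alpha_kJ_\e^{-1}$, with $I,J_\e$ the $2\times2$ matrices.

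Next I would diagonalize $J_\e$. Since everything is built from $I$ and $J_\e$, and $(J_\e-\e I)^2=-I$, all the matrices at hand commute; passing to $W_k\otimes\mathbb C$, where $J_\e$ has the two simple eigenvalues $\e\pm i$, splits $V_k$ into two one-complex-dimensional invariant pieces. Fixing one such eigenvalue $\kappa\in\{\e+i,\e-i\}$, on the corresponding piece the function $t\mapsto\Pi_1S_\mu^\e(t)$ solves the scalar complex Cauchy problem for
\[
\mu\,\ddot z(t)+\kappa\,\dot z(t)+\hat\alpha_k\,z(t)=0,
\]
while $t\mapsto T_\e(t)$ solves $\dot w(t)+\hat\alpha_k\kappa^{-1}w(t)=0$. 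Let $\lambda_\pm^{(\mu)}=(-\kappa\pm\sqrt{\kappa^2-4\mu\hat\alpha_k})/(2\mu)$ be the characteristic roots. Expanding the square root near $\kappa^2\neq0$, one checks that for $\mu$ small the roots are distinct with strictly negative real part, that the ``slow'' root $\lambda^{\mathrm s}$ tends to $\lambda_\infty:=-\hat\alpha_k\kappa^{-1}$ (whose real part equals $-\hat\alpha_k\e/(1+\e^2)<0$), that the ``fast'' root $\lambda^{\mathrm f}$ satisfies $\mu\lambda^{\mathrm f}\to-\kappa$, so $\Re\lambda^{\mathrm f}\to-\infty$, and that $\mu(\lambda^{\mathrm s}-\lambda^{\mathrm f})\to\kappa$.

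Now I would solve the two Cauchy problems explicitly. With data $z(0)=z_0$, $\dot z(0)=0$ one gets $z(t)=c^{\mathrm s}e^{\lambda^{\mathrm s}t}+c^{\mathrm f}e^{\lambda^{\mathrm f}t}$ with $c^{\mathrm s}=-\lambda^{\mathrm f}z_0/(\lambda^{\mathrm s}-\lambda^{\mathrm f})\to z_0$ and $c^{\mathrm f}=\lambda^{\mathrm s}z_0/(\lambda^{\mathrm s}-\lambda^{\mathrm f})\to0$; since $\Re\lambda^{\mathrm f}<0$ the fast term is bounded in modulus by $|c^{\mathrm f}|\to0$, uniformly on $[0,\infty)$, while $c^{\mathrm s}e^{\lambda^{\mathrm s}t}\to z_0e^{\lambda_\infty t}$ uniformly on $[0,T]$ by continuity of $(\lambda,t)\mapsto e^{\lambda t}$; as $z_0e^{\lambda_\infty t}$ is the solution of the limiting equation, recombining the two eigen-pieces over $k=1,\dots,n$ yields \eqref{Smu-to-Se-1}. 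With data $z(0)=0$, $\dot z(0)=y_0$ one gets $z(t)=\frac{y_0}{\lambda^{\mathrm s}-\lambda^{\mathrm f}}(e^{\lambda^{\mathrm s}t}-e^{\lambda^{\mathrm f}t})$, hence $\frac1\mu z(t)=\frac{y_0}{\mu(\lambda^{\mathrm s}-\lambda^{\mathrm f})}(e^{\lambda^{\mathrm s}t}-e^{\lambda^{\mathrm f}t})$, whose prefactor tends to $y_0\kappa^{-1}$, the $\kappa$-component of $J_\e^{-1}y$; the slow part converges uniformly on $[0,T]$ to $y_0\kappa^{-1}e^{\lambda_\infty t}$, whereas the fast part $e^{\lambda^{\mathrm f}t}$ is now merely bounded and does \emph{not} vanish near $t=0$ --- which is exactly why the statement is confined to $[t_0,T]$ --- but for $t\ge t_0$ one has $|e^{\lambda^{\mathrm f}t}|\le e^{t_0\Re\lambda^{\mathrm f}}\to0$. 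Since $J_\e^{-1}$ and $T_\e(t)$ commute on each $W_k$, recombining identifies the limit as $T_\e(t)J_\e^{-1}(\,\cdot\,)$, which is \eqref{Smu-to-Se-3}.

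The main obstacle is the root analysis of the singularly perturbed characteristic equation: one must identify correctly which root is slow and which is fast, verify that \emph{both} lie in the open left half-plane for small $\mu$ (this is what makes $|e^{\lambda^{\mathrm f}t}|\le1$, hence everything uniform) and that the fast one escapes to $-\infty$, together with the asymptotics $\lambda^{\mathrm s}\to\lambda_\infty$ and $\mu(\lambda^{\mathrm s}-\lambda^{\mathrm f})\to\kappa$; once these are in place, the rest is a bounded-interval continuity argument, aided by the uniform a priori bound $|\Pi_1S_\mu^\e(t)(P_nx,0)|_H\le|P_nx|_H$ furnished by \eqref{energy-est-1}. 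A cosmetic point is the possibility of a double characteristic root: since $\kappa^2\neq0$, it does not occur for $\mu$ below an explicit threshold, so it may be ignored. Finally, the presence of $t\ge t_0>0$ in \eqref{Smu-to-Se-3} and its absence in \eqref{Smu-to-Se-1} are explained by the analysis itself: in the $(0,P_ny)$ case the coefficient of the fast exponential stays of order one near $t=0$, while in the $(P_nx,0)$ case it tends to $0$.
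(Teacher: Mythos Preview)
Your argument is correct, but it follows a genuinely different route from the paper's proof. The paper never diagonalizes $J_\e$ or computes characteristic roots. Instead, working on the same two-dimensional block $W_k$, it treats the velocity equation $\frac{d}{dt}\big(e^{J_\e t/\mu}\dot u_\mu^\e\big)=-(\alpha_{2k}/\mu)e^{J_\e t/\mu}u_\mu^\e$ by variation of constants and then integrates once more to obtain the Volterra-type identity
\[
u_\mu^\e(t)=x+\big(I-e^{-J_\e t/\mu}\big)J_\e^{-1}y-\alpha_{2k}\int_0^t\big(I-e^{-J_\e(t-s)/\mu}\big)J_\e^{-1}u_\mu^\e(s)\,ds,
\]
compares it with the analogous integral equation for $T_\e(t)x$ (resp.\ $T_\e(t)J_\e^{-1}y$), and closes with Gr\"onwall's inequality together with the explicit bound $\|e^{-J_\e t/\mu}\|=e^{-\e t/\mu}$. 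This yields quantitative rates, e.g.\ $|\Pi_1S_\mu^\e(t)(x,0)-T_\e(t)x|_H\le(\mu/\e)\alpha_{2k}|x|_He^{\alpha_{2k}T}$, without any spectral analysis. Your approach, by contrast, passes to the complex eigenlines of $J_\e$ and reduces to the scalar singular perturbation $\mu\ddot z+\kappa\dot z+\hat\alpha_k z=0$, whose explicit solution makes the slow/fast splitting and the role of $t_0>0$ in \eqref{Smu-to-Se-3} completely transparent. The paper's argument is shorter, avoids the complex square-root bookkeeping, and generalizes more readily (it only uses that $e^{-J_\e t/\mu}$ decays), while yours gives sharper structural insight into why the convergence happens. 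Both are valid.
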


\begin{proof}
  Fix $k \in\,\nat$, and let us consider the function $u_\mu^\e(t) = \Pi_1 S_\mu^\e(t)\left(x,\frac{y}{\mu}\right)$, with $x,y \in \span\{e_{2k-1}, e_{2k}\}$.
 We have
  \[
  \left\{
  \begin{array}{l}
  \ds{\mu \frac{\partial^2 u_\mu^\e}{\partial t^2}(t) + J_\e \frac{\partial u^\e_\mu}{\partial t}(t) = - \alpha_{2k} u^\e_\mu(t)}\\
  \vs
  \ds{u^\e_\mu(0) = x, \ \ \ \ \frac{\partial u^\e_\mu}{\partial t}(0) = \frac{y}{\mu},}\end{array}\right.\]
so that
  \[\frac{d}{dt} \left(e^{\frac{J_\e}{\mu}t}\frac{\partial u^\e_\mu}{\partial t}(t) \right) = -\frac{\alpha_{2k}}{\mu}\, e^{\frac{J_\e}\mu t}u^\e_\mu(t).\]
  By integrating in time, we see that
  \[\frac{\partial u^\e_\mu}{\partial t}(t) =  e^{-\frac{J_\e}{\mu}t} \frac{y}{\mu} -\frac{\alpha_{2k}}{\mu} \int_0^t e^{-\frac{J_\e}{\mu}(t-s)} u^\e_\mu(s) ds.  \]
  Integrating once again, and exchanging the order of integration, we conclude that $\Pi_1 S^\e_\mu(t)(x,y/\mu)=u^\e_\mu(t)$ solves
  \begin{equation} \label{int-by-parts-eq}
   u^\e_\mu(t) = x +  \left(I - e^{-\frac{J_\e}{\mu}t}\right) J_\e^{-1} y -\alpha_{2k} \int_0^t \left(I-e^{-\frac{J_\e}{\mu}(t-s)}\right)J_\e^{-1}u^\e_\mu(s) ds.
  \end{equation}

Now, since $T_\e(t)x$ solves the equation
  \[T_\e(t)x = x -\alpha_{2k} \int_0^t J_\e^{-1} T_\e(s)x ds,\]
 from \eqref{int-by-parts-eq} and \eqref{energy-est-1}, we get
  \[\begin{array}{l}
  \ds{\left|\Pi_1S^\e_\mu(t)(x,0) - T_\e(t)x \right|_H }\\
  \vs
  \ds{\leq \alpha_{2k} \int_0^t \left|J_\e^{-1}( \Pi_1S^\e_\mu(s)(x,0)- T_\e(s)x) \right|_H ds + \alpha_{2k} \int_0^t |e^{-\frac{J_\e}{\mu}(t-s)} J_\e^{-1}S^\e_\mu(s)(x,0)|_H ds.}\\
  \vs
  \ds{\leq \alpha_{2k} \int_0^t \left| \Pi_1S^\e_\mu(s)(x,0)- T_\e(s)x \right|_H ds+ \alpha_{2k} \int_0^t e^{-\frac \e\mu(t-s)}\,ds|x|_H.}
  \end{array}\]
From Gr\"onwall's inequality we see that for $0 \leq t \leq T$,
  \[\left|\Pi_1S^\e_\mu(t)(x,0) - T_\e(t)x \right|_H \leq \frac{\mu}{\e}\, \a_{2k}|x|_H e^{\a_{2k} T}, \]
 and  this yields \eqref{Smu-to-Se-1}.

  We prove \eqref{Smu-to-Se-3} analogously, by taking $x=0$ in \eqref{int-by-parts-eq}. In this case, thanks to \eqref{s2} and the fact that $\|J_\e^{-1}\|_{\mathcal{L}(\reals^2)} \leq 1.$
  \[\begin{array}{l}
  \ds{\left|\frac 1\mu \Pi_1 S_\mu^\e(t)(0,y) - T_\e(t) J_\e^{-1} y \right|_H \leq \left|e^{-\frac{J_\e}{\mu}t} J_\e^{-1} y \right|_H}\\
 \vs
 \ds{ + \alpha_{2k} \int_0^t \left|J_\e^{-1} \le(\frac 1\mu \Pi_1 S^\e_\mu(s)(0,y)- T_\e(s)J_\e^{-1} y\r) \right|_H ds + \alpha_{2k} \int_0^t \left|e^{-\frac{J_\e}{\mu}(t-s)} J_\e^{-1} \frac 1\mu \Pi_1 S^\e_\mu(s)(0,y) \right|_H ds}\\
 \vs
 \ds{\leq e^{-\frac \e\mu t}|y|_H+ \alpha_{2k} \int_0^t \left|\frac 1\mu \Pi_1 S^\e_\mu(s)(0,y)- T_\e(s)J_\e^{-1} y \right|_H ds+\alpha_{2k} \int_0^t e^{-\frac \e\mu (t-s)}\,ds|y|_H}\\
 \vs
 \ds{\leq \le(e^{-\frac \e\mu t}+\frac {\a_{2k}\mu}\e\r)|y|_H+\alpha_{2k} \int_0^t \left|\frac 1\mu \Pi_1 S^\e_\mu(s)(0,y)- T_\e(s)J_\e^{-1} y \right|_H ds.}
 \end{array} \]
  By Gr\"onwall's inequality,
  \[\left| \frac 1\mu \Pi_1 S_\mu^\e(t)(0,y) - T_\e(t) J_\e^{-1} y \right|_H \leq \left( e^{-\frac{\e t}{\mu}}\left| y \right|_H + \frac{\mu \a_{2k}}{\e}  |y|_H \right)e^{\a_{2k}T},\ \ \ \ \ t \in\,[0,T],\]
  and this implies \eqref{Smu-to-Se-3}.
\end{proof}

\begin{Corollary}
  For any $\e>0$ and $T>0$ and for any $(x,y) \in \H$,
  \begin{equation} \label{Smu-to-Se-at-a-point}
    \lim_{\mu \to 0} \sup_{t \leq T} |\Pi_1 S_\mu^\e(t)(x,y) - T_\e(t) x|_H=0.
  \end{equation}
Moreover, for any $y \in H$ and $0< t_0 \leq T$,
  \begin{equation} \label{Smu-to-Se-3-at-a-point}
    \lim_{\mu \to 0} \sup_{t_0 \leq t \leq T} \left|\frac 1\mu\Pi_1 S_\mu^\e(t) \left(0, y \right) - T_\e(t) J_\e^{-1} y \right|_H =0.
  \end{equation}
\end{Corollary}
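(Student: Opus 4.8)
The plan is to deduce the Corollary from the preceding Theorem by a density argument, exploiting the uniform bounds on the semigroups $S_\mu^\e(t)$ and $T_\e(t)$ established earlier. For the first limit \eqref{Smu-to-Se-at-a-point}, fix $(x,y)\in\H$ and let $\eta>0$. First I would split
\[\Pi_1 S_\mu^\e(t)(x,y)-T_\e(t)x = \Pi_1 S_\mu^\e(t)(x,0)-T_\e(t)x + \Pi_1 S_\mu^\e(t)(0,y),\]
and treat the two contributions separately. For the first piece, I approximate $x$ by $P_n x$: write $\Pi_1 S_\mu^\e(t)(x,0)-T_\e(t)x = \Pi_1 S_\mu^\e(t)(P_n x,0)-T_\e(t)P_n x + \Pi_1 S_\mu^\e(t)((I-P_n)x,0)-T_\e(t)(I-P_n)x$. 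By \eqref{energy-est-1} (with $\theta=0$, $y=0$) we have $|\Pi_1 S_\mu^\e(t)(z,0)|_H\le |z|_H$ uniformly in $\mu$ and $t$, and by \eqref{Se-bound} we have $\|T_\e(t)\|_{\mathcal L(H)}\le 1$; hence the tail term is bounded by $2|(I-P_n)x|_H$, which is $<\eta/2$ for $n$ large. With $n$ so fixed, \eqref{Smu-to-Se-1} gives that the finite-dimensional term tends to $0$ as $\mu\to0$, uniformly in $t\le T$.

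For the second piece $\Pi_1 S_\mu^\e(t)(0,y)$, I must show it tends to $0$ uniformly in $t\le T$. This requires care because the analogue of \eqref{Smu-to-Se-3} only holds on $[t_0,T]$. The point is that by \eqref{s2} with $\theta=0$, $\gamma=1$ we have $|\Pi_1 S_\mu^\e(t)(0,y)|_H\le 2\mu|y|_H$ uniformly in $t\ge0$, so in fact $\sup_{t\le T}|\Pi_1 S_\mu^\e(t)(0,y)|_H\le 2\mu|y|_H\to0$ directly, with no need to split off a neighborhood of $t=0$. Combining, $\sup_{t\le T}|\Pi_1 S_\mu^\e(t)(x,y)-T_\e(t)x|_H<\eta$ for $\mu$ small, which proves \eqref{Smu-to-Se-at-a-point}.

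For \eqref{Smu-to-Se-3-at-a-point}, fix $y\in H$, $0<t_0\le T$, and $\eta>0$, and approximate $y$ by $P_n y$. I would write
\[\frac1\mu\Pi_1 S_\mu^\e(t)(0,y)-T_\e(t)J_\e^{-1}y = \frac1\mu\Pi_1 S_\mu^\e(t)(0,P_ny)-T_\e(t)J_\e^{-1}P_ny + \frac1\mu\Pi_1 S_\mu^\e(t)(0,(I-P_n)y)-T_\e(t)J_\e^{-1}(I-P_n)y.\]
For the tail, \eqref{energy-est-2} with $\theta=0$, $x=0$ yields $\big|\mu\Pi_2 S_\mu^\e(t)(0,z)+J_\e\Pi_1 S_\mu^\e(t)(0,z)\big|_{H^{-1}}^2\le|z|_{H^{-1}}^2$, and combined with \eqref{energy-est-1} ($\mu|\Pi_2 S_\mu^\e(t)(0,z)|_{H^{-1}}^2\le|z|_{H^{-1}}^2$) this is not quite what is needed since I want an $H$-estimate, not an $H^{-1}$-estimate; instead I observe that from the Duhamel identity \eqref{int-by-parts-eq} with $x=0$, or more simply from \eqref{energy-est-1} with $\theta=1$ giving $|\Pi_1 S_\mu^\e(t)(0,z)|_{H^1}\le\sqrt\mu|z|_H$ together with $|\Pi_1 S_\mu^\e(t)(0,z)|_{H^{-1}}\le 2\mu|z|_{H^{-1}}\le 2\mu\alpha_1^{-1/2}|z|_H$ from \eqref{energy-est-2}, interpolation gives $\frac1\mu|\Pi_1 S_\mu^\e(t)(0,z)|_H\le \frac1\mu|\Pi_1 S_\mu^\e(t)(0,z)|_{H^{-1}}^{1/2}|\Pi_1 S_\mu^\e(t)(0,z)|_{H^1}^{1/2}\le C|z|_H$ uniformly in $\mu$ and $t$; together with $\|T_\e(t)J_\e^{-1}\|_{\mathcal L(H)}\le 1$ the tail is $\le (C+1)|(I-P_n)y|_H<\eta/2$ for $n$ large. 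With $n$ fixed, \eqref{Smu-to-Se-3} makes the finite-dimensional term vanish as $\mu\to0$, uniformly on $[t_0,T]$, giving \eqref{Smu-to-Se-3-at-a-point}. The only delicate point — the main obstacle — is obtaining a $\mu$-uniform bound on $\frac1\mu\Pi_1 S_\mu^\e(t)(0,z)$ in $H$ valid for all $t$ (or at least on $[t_0,T]$) to control the projection tail; this is handled by the interpolation estimate above, which combines the regularizing bound \eqref{energy-est-1} at level $\theta=1$ with the $H^{-1}$-bound from \eqref{energy-est-2}.
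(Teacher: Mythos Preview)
Your overall strategy matches the paper's: split off $\Pi_1 S_\mu^\e(t)(0,y)$, approximate $x$ (resp.\ $y$) by $P_n x$ (resp.\ $P_n y$), use the uniform semigroup bounds to control the tails, and invoke \eqref{Smu-to-Se-1} (resp.\ \eqref{Smu-to-Se-3}) on the finite-dimensional piece. The first part \eqref{Smu-to-Se-at-a-point} is handled correctly and essentially as in the paper.

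There is, however, a genuine arithmetic error in your treatment of the tail term for \eqref{Smu-to-Se-3-at-a-point}. Your interpolation combines $|\Pi_1 S_\mu^\e(t)(0,z)|_{H^{-1}}\le 2\mu\alpha_1^{-1/2}|z|_H$ and $|\Pi_1 S_\mu^\e(t)(0,z)|_{H^1}\le\sqrt{\mu}\,|z|_H$, which gives
\[
\frac{1}{\mu}\,|\Pi_1 S_\mu^\e(t)(0,z)|_H \;\le\; \frac{1}{\mu}\bigl(2\mu\alpha_1^{-1/2}\bigr)^{1/2}\bigl(\sqrt{\mu}\bigr)^{1/2}|z|_H \;=\; C\,\mu^{-1/4}|z|_H,
\]
not $C|z|_H$ as you claim; the bound blows up as $\mu\downarrow 0$ and cannot control the projection tail uniformly.

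The fix is simply to drop the interpolation and apply directly the bound you already used in the first part: \eqref{s2} with $\theta=0$, $\gamma=1$ gives $|\Pi_1 S_\mu^\e(t)(0,z)|_H\le 2\mu|z|_H$, hence
\[
\frac{1}{\mu}\,|\Pi_1 S_\mu^\e(t)(0,z)|_H\le 2|z|_H
\]
uniformly in $\mu>0$ and $t\ge 0$. This is precisely what the paper does (it cites Lemma~\ref{Smu-bound-lem}), and with this single correction your argument goes through exactly as written.
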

\begin{proof}
 We have
   \[ \begin{array}{l}
  \ds{ |\Pi_1 S_\mu^\e(t)(x,y) - T_\e(t) x|_H
  \leq |\Pi_1 S_\mu^\e(t)(0,y) |_H+  |\Pi_1 S_\mu^\e(t) (P_nx,0) - T_\e(t) P_nx|_H}\\
  \vs
  \ds{
  +  |\Pi_1S_\mu^\e(t) (x-P_n x,0)| + \sup_{t \leq T} | T_\e(t) (x-P_nx)|_H:= I_1(t) +\sum_{j=2}^4 I_{n,j}(t).}
   \end{array}\]
  By \eqref{energy-est-1} and \eqref{Se-bound}, for any $\eta>0$ there exists $n_\eta \in\,\nat$ such that
  \[I_{n_\eta,3}(t)+I_{n_\eta,4}(t) \leq 3 |x-P_{n_\eta}x|_H\leq \frac{\eta}3,\ \ \ \ t\geq 0.\]
 Moreover, by \eqref{Smu-to-Se-1}, we can then find $\mu_1$ such that for $\mu<\mu_1$
  \[\sup_{t \in\,[0,T]}\,I_{n_\eta,2}(t) < \frac{\eta}{3},\]
and then since from \eqref{energy-est-1}
  \[\sup_{t\geq 0}I_1(t) \leq 2 \mu |y|_H,\]
we can conclude that
\[\sup_{t \in\,[0,T]}|\Pi_1 S_\mu^\e(t)(x,y) - T_\e(t) x|_H\leq \eta,\ \ \ \mu\leq \mu_0,\]
and
 \eqref{Smu-to-Se-at-a-point} follows from the arbitrariness of $\eta>0$.

In order to prove \eqref{Smu-to-Se-3-at-a-point}, we have
  \[\begin{array}{l}
  \ds{\left|\frac 1\mu S_\mu^\e(t)\left(0, y \right) - T_\e(t) J_\e^{-1} y \right|_H \leq  \left| \frac 1\mu S_\mu^\e(t) \left(0, P_n y \right) - T_\e(t) J_\e^{-1} P_n y \right|_H}\\
  \vs
  \ds{ +  \left|\frac 1\mu \Pi_1S_\mu^\e(t) \left( 0 , y-P_n y \right) \right|_H + \left| T_\e(t) J_\e^{-1} (y-P_n y) \right|_H : = \sum_{j=1}^3I_{n,j}(t) . }
  \end{array}\]
  By Lemma \ref{Smu-bound-lem} and \eqref{Se-bound}, we have
  \[I_{n,2}(t)+I_{n,3}(t) \leq c | y - P_n y|_H,\ \ \ \ \ t \geq 0.\]
    Then, for any $\eta>0$ we can fix $n_\eta \in\,\nat$ such that
    \[\sup_{t\geq 0}I_{n,2}(t)+I_{n,3}(t)\leq \frac \eta 2.\]
Moreover, thanks to
\eqref{Smu-to-Se-3}, we can find $\mu_0$ such that for all $\mu< \mu_0$,
  \[\sup_{t \in\,[t_0,T]}I_{n_\eta,1}(t) < \frac{\eta}{2}.\]
  Because $\eta>0$ was arbitrary, \eqref{Smu-to-Se-3-at-a-point} follows.
\end{proof}

\begin{Corollary}
\label{cor3.7}
  For any $\e>0$, $T>0$ and $p\geq 1$ and for any $\psi \in L^p(\Omega; L^p([0,T];H))$,
  \begin{equation} \label{Smu-to-Se-2}
    \lim_{\mu \to 0} \,\E \sup_{t \in\,[0,T]} \left|\frac{1}{\mu} \int_0^t \Pi_1 S_\mu^\e(t-s) (0,\psi(s)) ds - \int_0^t T_\e(t-s) J_\e^{-1} \psi(s) ds \right|_H^p = 0.
  \end{equation}
\end{Corollary}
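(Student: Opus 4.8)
The plan is to use linearity of the two integral maps in $\psi$, a uniform-in-$\mu$ bound, and a density argument. Set
\[\Phi_\mu\psi(t):=\frac1\mu\int_0^t\Pi_1 S_\mu^\e(t-s)(0,\psi(s))\,ds,\qquad\Phi_0\psi(t):=\int_0^t T_\e(t-s)J_\e^{-1}\psi(s)\,ds.\]
Taking $\theta=0$ and $\gamma=1$ in \eqref{s2} gives $\frac1\mu|\Pi_1 S_\mu^\e(r)(0,z)|_H\le 2|z|_H$ for all $r\ge0$, $z\in H$ and $\mu>0$, while $\|T_\e(r)\|_{\mathcal L(H)}\le1$ by \eqref{Se-bound} and $\|J_\e^{-1}\|_{\mathcal L(\reals^2)}\le1$. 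Hence $\sup_{t\le T}|\Phi_\mu\psi(t)-\Phi_0\psi(t)|_H\le 3\int_0^T|\psi(s)|_H\,ds$, and by H\"older's inequality in time and integration in $\omega$,
\[\E\sup_{t\le T}|\Phi_\mu\psi(t)-\Phi_0\psi(t)|_H^p\le 3^p\,T^{p-1}\,\E\int_0^T|\psi(s)|_H^p\,ds=3^p\,T^{p-1}\|\psi\|_{L^p(\Omega;L^p([0,T];H))}^p,\]
uniformly in $\mu>0$.

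Thanks to this uniform bound, it suffices to prove \eqref{Smu-to-Se-2} for $\psi$ ranging over a dense subset of $L^p(\Omega;L^p([0,T];H))$, the general case then following from a standard density argument. I choose the dense subset formed by finite linear combinations of functions of product type $\psi(s,\omega)=g(\omega)\,\mathbbm 1_{(a,b]}(s)\,h$, with $g\in L^p(\Omega)$, $0\le a<b\le T$ and $h\in H$; by linearity it is enough to treat one such $\psi$. For it $\Phi_\mu\psi(t,\omega)=g(\omega)\varphi_\mu(t)$ and $\Phi_0\psi(t,\omega)=g(\omega)\varphi_0(t)$ with $\varphi_\mu,\varphi_0$ deterministic, so that $\E\sup_{t\le T}|\Phi_\mu\psi(t)-\Phi_0\psi(t)|_H^p=\E|g|^p\cdot\sup_{t\le T}|\varphi_\mu(t)-\varphi_0(t)|_H^p$, and the matter reduces to the deterministic statement $\sup_{t\le T}|\varphi_\mu(t)-\varphi_0(t)|_H\to0$ as $\mu\to0$. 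Performing the change of variable $r=t-s$ one finds, for every $t\in[0,T]$,
\[\varphi_\mu(t)-\varphi_0(t)=\int_{(t-b)^+}^{(t-a)^+}D_\mu(r)\,dr,\qquad D_\mu(r):=\frac1\mu\Pi_1 S_\mu^\e(r)(0,h)-T_\e(r)J_\e^{-1}h.\]

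This last deterministic estimate is the crux, and the only delicate point is the behaviour of $D_\mu$ near $r=0$: because $S_\mu^\e(0)=I$ we have $D_\mu(0)=-J_\e^{-1}h$, which does not vanish, so there is a genuine boundary layer and $D_\mu(r)$ cannot tend to $0$ uniformly on $[0,T]$. This is harmless for the integral, though: \eqref{s2} gives the uniform bound $|D_\mu(r)|_H\le3|h|_H$ for every $r\ge0$ and $\mu>0$, and \eqref{Smu-to-Se-3-at-a-point} gives $\lim_{\mu\to0}\sup_{r_0\le r\le T}|D_\mu(r)|_H=0$ for each fixed $r_0>0$. Since $[(t-b)^+,(t-a)^+]\subseteq[0,T]$ whenever $t\in[0,T]$, splitting the integral at $r_0$ yields
\[\sup_{t\le T}|\varphi_\mu(t)-\varphi_0(t)|_H\le\int_0^{r_0}|D_\mu(r)|_H\,dr+\int_{r_0}^T|D_\mu(r)|_H\,dr\le3|h|_H\,r_0+T\sup_{r_0\le r\le T}|D_\mu(r)|_H.\]
Given $\eta>0$, first pick $r_0$ with $3|h|_H r_0<\eta/2$ and then let $\mu\to0$ to bound the second term by $\eta/2$; since $\eta$ is arbitrary the deterministic claim follows, and unwinding the reductions gives \eqref{Smu-to-Se-2}.
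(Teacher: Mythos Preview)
Your proof is correct and follows the same three–step skeleton as the paper: a uniform-in-$\mu$ bound on the difference operator, an approximation/density argument, and a boundary-layer splitting near $r=0$ (equivalently $s=t$). The implementations differ slightly. The paper approximates $\psi$ by $\psi_n:=\mathbbm 1_{\{|\psi|_H\le n\}}P_n\psi$ (bounded with values in $H_n$), keeps the randomness throughout, and splits the time integral at $t-\delta$, invoking the uniform-on-$P_n$ limit \eqref{Smu-to-Se-3} on $[0,t-\delta]$. You instead pass to a dense class of product-form simple functions $g(\omega)\mathbbm 1_{(a,b]}(s)h$, which lets you factor out the randomness entirely and reduce to a deterministic estimate for a fixed $h\in H$; you then use the pointwise limit \eqref{Smu-to-Se-3-at-a-point} on $[r_0,T]$. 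Your route is marginally cleaner in that it makes the purely deterministic core visible, at the cost of appealing to the density of tensor-type simple functions in $L^p(\Omega\times[0,T];H)$; the paper's route avoids that density step but carries the stochastic integrand through the whole argument.
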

\begin{proof}
For any $\psi \in L^p(\Omega;L^p([0,T];H))$ and $n \in\,\nat$, let us define
  \[\psi_n(t) = I_{\{|\psi(t)|_H\leq n\}}P_n\psi(t),\ \ \ \ t \in\,[0,T]. \]
  We have clearly $\psi_n \in\,L^\infty(\Omega\times [0,T];H_n)$ and by the dominated convergence theorem
  \[\lim_{n\to\infty}\E|\psi_n-\psi|^p_{L^p(0,T;H)}=0.\]
If for any $\mu,\e>0$ and $t\geq 0$ we define
  \[\Phi^\mu_\e(t) y =  \frac{1}{\mu} \Pi_1 S_\mu^\e(t) (0, y) - T_\e(t) J_\e^{-1} y,\ \ \ \ y \in\,H, \]
for any $0<\d<t$, we have
  \begin{equation*}
   \begin{array}{l}
     \ds{ \frac{1}{\mu} \int_0^t \Pi_1 S_\mu^\e(t-s) (0,\psi(s)) ds - \int_0^t T_\e(t-s) J_\e^{-1} \psi(s) ds}\\
     \vs
     \ds{  =   \int_0^t \Phi^\mu_\e(t-s) (\psi(s) - \psi_n(s) ) ds + \int_0^{t-\delta}  \Phi^\mu_\e(t-s) \psi_n(s) ds}\\
     \vs
     \ds{+
       \int_{t-\delta}^t  \Phi^\mu_\e(t-s) \psi_n(s)  ds :=  I_{n,1}(t)+I_{n,2}(\d,t)+ I_{n,3}(\d,t).}
   \end{array}
  \end{equation*}
As a consequence of Lemma \ref{Smu-bound-lem} and \eqref{Se-bound}, we have that for any fixed $\e>0$
\[\sup_{t  \in\,[0,T]}\, \sup_{\mu>0}\,\|\Phi^\mu_\e(t)\|_{{\cal L}(H)} := M  < +\infty,\]
so that
\[\E\sup_{t \in\,[0,T]}|I_{n,1}(t)|_H^p\leq T^{p-1} M^p \E|\psi - \psi_n|_{L^p([0,T];H)}^p.\]
Therefore, for any $\eta>0$ there exists $n_\eta \in\,\nat$ such that
\begin{equation}
\label{s4}
\E\sup_{t \in\,[0,T]}|I_{n_\eta,1}(t)|_H^p<\frac{\eta}3.
\end{equation}
Next, since $|\psi_{n_\eta}(s)|_H\leq n_\eta$, for every $s \in\,[0,T]$, we have
\begin{equation}
\label{s5}
\d_{\eta}(t)= \le(\frac{\eta}{3}\r)^{\frac 1p}\frac 1{n_\eta M}\wedge t\Longrightarrow \E \sup_{t \in\,[0,T]}|I_{n,3}(\d_{\eta}(t),t)|_H^p\leq \frac\eta 3.
\end{equation}
Finally, by \eqref{Smu-to-Se-3} we can find $\mu_0>0$ small enough so that for $\mu<\mu_0$
\[\sup_{t \in\,[\d_{\eta}(t),T]}\,\sup_{|y|_H\leq n_\eta} |\Phi^\mu_\e(t) P_{n_\eta}y|_{H}<\le(\frac{\eta}{3T}\r)^{\frac 1p},\]
so that
\[\E \sup_{t \in\,[0,T]}|I_{n,2}(\d_{\eta}(t),t)|_H^p\leq \frac\eta 3.\]
Together with \eqref{s4} and \eqref{s5}, this implies \eqref{Smu-to-Se-2}.

\end{proof}

\section{Approximation by small friction for additive noise } \label{sec4}
In this section, we assume that the noisy perturbation in system \eqref{intro-eq} is of additive type, that is $G(x,t) = I$, for any $x \in\,H$ and $t\geq 0$. Moreover, we assume that the covariance operator $Q$ satisfies the following condition.
\begin{Hypothesis}
\label{H5}
There exists a non-negative sequence $\{\la_k\}_{k \in\,\nat}$ such that $Q e_k=\la_k e_k$, for any $k \in\,\nat$. Moreover, there exists $\d>0$ such that
\[\sum_{k=1}^\infty \frac{\la_k^2}{\a_k^{1-\d}}<\infty.\]
\end{Hypothesis}

With the notations we have introduced in Sections \ref{sec2} and \ref{sec3}, if we denote
\[z_\mu^\e(t)=(u_\mu^\e(t),\frac{\partial u_\mu^\e}{\partial t}(t)),\ \ \ t \geq 0,\]
the regularized system \eqref{regular-intro} can be rewritten as the abstract evolution equation
\begin{equation}
\label{reg-abst}
dz_{\mu}^\e(t)=\le[A^\e_\mu z^\e_\mu(t)+B_\mu(z_\mu^\e(t),t)\r]\,dt+Q_\mu dw(t),\ \ \ \ \ z_\mu^\e(0)=(x,y)
\end{equation}
in the Hilbert space $\H$.

Our purpose here is to prove that for any fixed $\e>0$ the process $u_\mu^\e(t)$ converges to the solution $u_\e(t)$ of the following system of stochastic PDEs
\begin{equation} \label{first-order-eq-w-frict}
\left\{
  \begin{array}{l}
    \ds{\frac{\partial u_\e}{\partial t}(t) = J_\e^{-1} \Delta u_\e(t) + B_\e(u_\e(t),t) + \frac{\partial w^{Q_\e}}{\partial t}}\\
    \vs
    \ds{u_\e(0) = u_0,\ \ \ \ u_\e(\xi,t)=0,\ \ \ \xi \in\,\partial D,}
  \end{array}
  \right.
\end{equation}
where for any $\e>0$ we have defined $Q_\e=J_\e^{-1} Q$ and
\[B_\e(x,t)=J_\e^{-1} B(x,t),\ \ \ x \in\,H,\ \ \ t\geq 0.\]
Notice that with these notations, system \eqref{first-order-eq-w-frict} can be rewritten as the abstract evolution equation
\begin{equation}
\label{first-abst}
du_\e(t)=\le[A_\e u_\e(t)+B_\e(u_\e(t),t)\r]\,dt+Q_\e dw(t),\ \ \ \ u_\e(0)=u_0,\end{equation}
in the Hilbert space $H$.

\medskip

According to Lemma \ref{l1}, due to Hypothesis \ref{H5} for any $t\geq 0$ we have
\[\int_0^t s^{-\d}\sum_{k=1}^\infty |S^\e_\mu(t-s)Q_\mu e_k|^2_{\H}\,ds\leq c\le(1+\mu^{-(1+\d)}\r)\sum_{k=1}^\infty\frac{\la_k^2}{\a_k^{1-\d}}.\]
This implies that the stochastic convolution
\[\Gamma^\e_\mu(t):=\int_0^t S^\e_\mu(s) Q_\mu\,dw(s),\ \ \ \ t\geq 0,\] takes values in $L^p(\Omega;C([0,T];\H))$, for any $T>0$ and $p\geq 1$ (for a proof see \cite{dpz}).
Therefore, as the mapping $B_\mu(\cdot,t):\H\to\H$ is Lipschitz-continuous, uniformly with respect to $t \in\,[0,T]$, we have that there exists a unique process $z^\e_\mu \in\,L^p(\Omega;C([0,T];\H))$  which solves equation \eqref{reg-abst} in the mild sense, that is
\[z^\e_\mu(t)=S^\e_\mu(t)(u_0,v_0)+\int_0^t S^\e_\mu(t-s)B_\mu(z^\e_\mu(s),s)\,ds+\Gamma^\e_\mu(t).\]

In the same way, due to \eqref{bound-2} we have that the stochastic convolution
\[\Gamma_\e(t):=\int_0^t T_\e(s) Q_\e\,dw(s),\ \ \ \ t\geq 0,\] takes values in $L^p(\Omega;C([0,T];H))$, for any $T>0$ and $p\geq 1$, so that, as the mapping $B_\e(\cdot,t):H\to H$ is Lipschitz-continuous, uniformly with respect to $t \in\,[0,T]$, we can conclude that there exists a unique process $u_\e \in\,L^p(\Omega;C([0,T];H))$ solving equation \eqref{first-abst} in mild sense, that is
\[
  u_\e(t) = T_\e(t)u_0 + \int_0^t T_\e(t-s) B_\e(u_\e(s),s) ds + \Gamma_\e(t).
\]

\begin{Theorem} \label{conv-in-mu-thm}
Under Hypotheses \ref{H2} and \ref{H5}, for any $\e>0$, $T>0$ and $p\geq 1$ and for any initial conditions $z_0=(u_0,v_0) \in\,\H$, we have
  \begin{equation}
    \lim_{\mu \to 0} \E \sup_{t\leq T} \left|u_\mu^\e(t) - u_\e(t) \right|_H^p  = 0.
  \end{equation}
\end{Theorem}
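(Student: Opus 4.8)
The plan is to rephrase the two equations as fixed-point problems and reduce the claim to a comparison of the two mild-solution maps evaluated at the single, $\mu$-independent process $u_\e$; the only genuinely new ingredient will be the convergence of the stochastic convolutions. Applying $\Pi_1$ to the mild formulations recalled before the statement one gets
\[u_\mu^\e(t)=\Pi_1 S_\mu^\e(t)(u_0,v_0)+\frac1\mu\int_0^t\Pi_1 S_\mu^\e(t-s)(0,B(u_\mu^\e(s),s))\,ds+\Pi_1\Gamma_\mu^\e(t),\]
\[u_\e(t)=T_\e(t)u_0+\int_0^t T_\e(t-s)J_\e^{-1}B(u_\e(s),s)\,ds+\Gamma_\e(t),\]
so $u_\mu^\e$, $u_\e$ are the fixed points in $L^p(\Omega;C([0,T];H))$ of the maps $\K_\mu$, $\K_0$ defined by these right-hand sides. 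Since $\|\frac1\mu\Pi_1 S_\mu^\e(\tau)(0,\cdot)\|_{\mathcal L(H)}\le 2$ by \eqref{s2} with $\gamma=1$ and $\|T_\e(\tau)\|_{\mathcal L(H)}\le 1$ by \eqref{Se-bound}, Hypothesis \ref{H2} makes $\K_\mu$, $\K_0$ Lipschitz on $L^p(\Omega;C([0,T];H))$ with constants $2\kappa_B(T)T$ and $\kappa_B(T)T$; in the equivalent norm $\|u\|_\la^p:=\E\sup_{t\le T}e^{-p\la t}|u(t)|_H^p$, for $\la$ large enough all of them become $\tfrac12$-contractions, uniformly in $\mu\in(0,1]$. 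Writing $u_\mu^\e-u_\e=[\K_\mu(u_\mu^\e)-\K_\mu(u_\e)]+[\K_\mu(u_\e)-\K_0(u_\e)]$ yields $\|u_\mu^\e-u_\e\|_\la\le 2\|\K_\mu(u_\e)-\K_0(u_\e)\|_\la$, so that, by equivalence of the norms, it is enough to show $\E\sup_{t\le T}|\K_\mu(u_\e)(t)-\K_0(u_\e)(t)|_H^p\to 0$ as $\mu\to0$.

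The difference $\K_\mu(u_\e)-\K_0(u_\e)$ has three pieces. The initial-datum piece $\Pi_1 S_\mu^\e(t)(u_0,v_0)-T_\e(t)u_0$ is deterministic and goes to $0$ uniformly on $[0,T]$ by \eqref{Smu-to-Se-at-a-point}. The drift piece is exactly the quantity controlled by Corollary \ref{cor3.7}, applied with the fixed process $\psi=B(u_\e(\cdot),\cdot)$, which belongs to $L^p(\Omega;L^p([0,T];H))$ because $u_\e\in L^p(\Omega;C([0,T];H))$ and $B$ has linear growth; hence it tends to $0$ in $L^p(\Omega;C([0,T];H))$. The last piece, the convergence of the stochastic convolutions $\Pi_1\Gamma_\mu^\e-\Gamma_\e\to0$ in $L^p(\Omega;C([0,T];H))$, is the main obstacle.

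For it, fix $\beta>0$ with $2\beta<\min(\d,1)$ ($\d$ from Hypothesis \ref{H5}); by Jensen's inequality it suffices to treat $p>1/\beta$. With $c_\beta=\sin(\pi\beta)/\pi$, for each fixed $\mu$ the stochastic factorization formula gives
\[\Gamma_\mu^\e(t)=c_\beta\int_0^t(t-s)^{\beta-1}S_\mu^\e(t-s)Y_\mu^\e(s)\,ds,\qquad Y_\mu^\e(s)=\int_0^s(s-r)^{-\beta}S_\mu^\e(s-r)Q_\mu\,dw(r)\in\H,\]
and likewise $\Gamma_\e(t)=c_\beta\int_0^t(t-s)^{\beta-1}T_\e(t-s)Z_\e(s)\,ds$ with $Z_\e(s)=\int_0^s(s-r)^{-\beta}T_\e(s-r)Q_\e\,dw(r)\in H$. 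Decomposing $S_\mu^\e(t-s)Y_\mu^\e(s)=S_\mu^\e(t-s)(\Pi_1 Y_\mu^\e(s),0)+S_\mu^\e(t-s)(0,\Pi_2 Y_\mu^\e(s))$ and projecting, $\Pi_1\Gamma_\mu^\e(t)-\Gamma_\e(t)$ becomes a sum of three convolutions of $(t-s)^{\beta-1}$ against $H$-valued processes; a Young inequality in time (using $\beta>1/p$) reduces the $L^p(\Omega;C([0,T];H))$-estimate to pointwise-in-$s$ second moments, treated thus. For the term with $\Pi_1 S_\mu^\e(t-s)(0,\Pi_2 Y_\mu^\e(s))$: choosing $\theta$ with $2\beta<\theta\le\d$, \eqref{s2} with $\gamma=\theta$ gives $|\Pi_1 S_\mu^\e(\tau)(0,y)|_H\le 2^\theta\mu^{(1+\theta)/2}|y|_{H^{\theta-1}}$, while \eqref{bound-3} (with $2\beta$ in place of its $\d$, and with $\theta$) gives $\E|\Pi_2 Y_\mu^\e(s)|_{H^{\theta-1}}^2\le c\,\mu^{-(1+2\beta)}\sum_k\la_k^2/\a_k^{1-\theta}$, the sum finite by Hypothesis \ref{H5}; as $Y_\mu^\e(s)$ is Gaussian this contribution is $O(\mu^{(\theta-2\beta)p/2})$ and vanishes. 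For the term $\Pi_1 S_\mu^\e(t-s)(\Pi_1 Y_\mu^\e(s)-Z_\e(s),0)$: by \eqref{energy-est-1} it is bounded by $|\Pi_1 Y_\mu^\e(s)-Z_\e(s)|_H$, and an It\^o-isometry computation — using \eqref{bound-1} and \eqref{bound-2} (with parameter $2\beta$) for a $\mu$-uniform summable-in-$k$ majorant, and the convergence $\frac1\mu\Pi_1 S_\mu^\e(\tau)(0,e_k)\to T_\e(\tau)J_\e^{-1}e_k$ from \eqref{Smu-to-Se-3-at-a-point} together with \eqref{s2} near $\tau=0$ — shows $\E|\Pi_1 Y_\mu^\e(s)-Z_\e(s)|_H^2\to0$, so dominated convergence applies. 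For $\Pi_1 S_\mu^\e(t-s)(Z_\e(s),0)-T_\e(t-s)Z_\e(s)$: approximate $Z_\e(s)$ by $P_nZ_\e(s)$, use \eqref{Smu-to-Se-1} for the finite-dimensional part and the contractivity of $\Pi_1 S_\mu^\e(\tau)(\cdot,0)$ and of $T_\e(\tau)$ for the remainder, and let $n\to\infty$ and then $\mu\to0$.

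The crux is the first of these three contributions: $\Pi_2 Y_\mu^\e$ diverges like $\mu^{-(1+2\beta)/2}$, and one must spend precisely the smoothing $\mu^{(1+\theta)/2}$ afforded by Lemma \ref{Smu-bound-lem}, with $2\beta<\theta\le\d$, to defeat it — this is exactly where Hypothesis \ref{H5} is used. Granting this, the remaining pieces are routine consequences of the uniform bounds in Lemmas \ref{l1} and \ref{l7} and of the semigroup convergences \eqref{Smu-to-Se-1}, \eqref{Smu-to-Se-at-a-point}, \eqref{Smu-to-Se-3-at-a-point}, and the theorem follows from the contraction estimate above.
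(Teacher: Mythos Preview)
Your proof is correct and follows essentially the same route as the paper: reduce via a Lipschitz/Gr\"onwall-type argument to the convergence of the initial-datum term (handled by \eqref{Smu-to-Se-at-a-point}), the drift convolution (Corollary \ref{cor3.7}), and the stochastic convolution, and then treat the latter by factorization and a three-term splitting in which the $\Pi_2$ piece is controlled by the interplay of Lemma \ref{Smu-bound-lem} with \eqref{bound-3}. The only cosmetic differences are your use of a weighted-norm contraction in place of Gr\"onwall and your choice to put the $\mu$-independent process $Z_\e$ (rather than the paper's $\Pi_1 Y_\mu^\e$) inside the semigroup-difference term, which slightly streamlines the $P_n$-approximation step since no uniformity in $\mu$ is needed there.
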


\begin{proof}
Due to Lemma \ref{Smu-bound-lem} and the Lipschitz continuity of $B$, we have
  \begin{equation*}
    \begin{array}{l}
      \ds{\left|u_\mu^\e(t) - u_\e(t) \right|_{H} \leq \left| \Pi_1 S_\mu^\e(t)z_0 - T_\e(t)u_0 \right|_{H} +c \int_0^t \left|u_\mu^\e(s) - u_\e(s) \right|_{H}ds} \\
      \vs
      \ds{+ \left|\int_0^t \left[\Pi_1 S_\mu^\e(t-s)  B_\mu((u_\e(s),0),s) - T_\e(t-s) B_\e(u_\e(s),s)  \right] ds \right|_{H}+|\Gamma_\mu^\e(t)-\Gamma_\e(t)|_H,}    \end{array}
  \end{equation*}
and then, from the Gr\"onwall's Lemma, for any $p\geq 1$ we get
  \begin{equation*}
    \begin{array}{l}
      \ds{\sup_{t \in\,[0,T]}\left|u_\mu^\e(t) - u_\e(t) \right|^p_{H} \leq c_{p}(T) \sup_{t \in\,[0,T]}\left| \Pi_1 S_\mu^\e(t)(u_0,v_0) - T_\e u_0 \right|^p_{H}} \\
      \vs
      \ds{+c_{p}(T)\sup_{t \in\,[0,T]}\left|\int_0^t \left( \Pi_1 S_\mu^\e(t-s)B_\mu(X_\e(s),s) - T_\e(t-s)B_\e(X_\e(s),s) \right) ds \right|^p_{H}}\\
      \vs
      \ds{+ c_{p}(T)\sup_{t \in\,[0,T]} \left|\Gamma_\mu^\e(t)-\Gamma_\e(t)\right|^p_{H}:= c_p(T)\,\sum_{k=1}^3 \sup_{t \in\,[0,T]}|I_k(t)|_H^p.}
    \end{array}
  \end{equation*}

By \eqref{Smu-to-Se-at-a-point}
\[\lim_{\mu\to 0}\sup_{t \in\,[0,T]}|I_{1}(t)|_H=0.\]
Moreover, by \eqref{Smu-to-Se-2}, we know that
\[\lim_{\mu\to 0} \E\sup_{t \in\,[0,T]}| I_2(t)|_H^p=0.\]

The analysis of $I_3(t)$ is more delicate. By using the factorization method (see \cite[Chapter 5]{dpz}) for any $\alpha \in (0, 1)$ we have
 \[\begin{array}{l}
 \ds{\frac{\pi}{\sin(\pi \alpha)}\,I_3(t) =    \int_0^t (t-\sigma)^{\alpha -1} \int_0^\sigma (\sigma - s)^{-\alpha}  \left( \Pi_1 S_\mu^\e(t-s)Q_\mu - T_\e(t-s) Q_\e \right) dw(s) d\sigma}\\
 \vs
 \ds{=\int_0^t (t-\sigma)^{\alpha -1} T_\e(t-\sigma)  Y^\a_{\mu,1}(\si)d\sigma+\int_0^t (t-\sigma)^{\alpha-1} \Pi_1 S_\mu^\e(t-\sigma) (0,Y^\a_{\mu,2}(\si)) d\sigma}\\
 \vs
 \ds{+
 \int_0^t (t-\sigma)^{\alpha - 1} \left[ \Pi_1 S_\mu^\e(t - \sigma)\Pi_1^\star - T_\e(t- \sigma) \right] Y^\a_{\mu,3}(\si) d \sigma,}
 \end{array}
 \]
   where
  \[\le\{\begin{array}{l}
  \ds{Y^\a_{\mu,1}(\si):=\int_0^\sigma (\sigma - s)^{-\alpha} \left[ \Pi_1 S_\mu^\e(\sigma - s) Q_\mu - T_\e(\sigma - s) Q_\e \right] dw(s)}\\
  \vs
  \ds{Y^\a_{\mu,2}(\si):=  \int_0^\sigma (\sigma-s)^{-\alpha} \Pi_2 S_\mu^\e(\sigma-s) Q_\mu dw(s),}\\
  \vs
  \ds{Y^\a_{\mu,3}(\si):=\int_0^\sigma (\sigma - s)^{-\alpha} \Pi_1 S_\mu^\e(\sigma - s) Q_\mu dw(s).}
  \end{array}\r.\]

We have
\[\E \left| Y^\a_{\mu,1}(\sigma) \right|_{H}^2 \leq  \int_0^T s^{-2\alpha} \sum_{k=1}^\infty \left| \left[ \Pi_1 S_\mu^\e(s) Q_\mu - T_\e(s) Q_\e \right] e_k \right|_{H}^2 ds.\]
  If we choose $\alpha= \frac{\delta}{4}$,
  then by \eqref{bound-1}, \eqref{bound-2},  \eqref{Smu-to-Se-3-at-a-point}, and Hypothesis \ref{H5}, from  the dominated convergence theorem we have
    \[\lim_{\mu\to 0} \int_0^T  s^{-{\d/2}} \sum_{k=1}^\infty \left| \left[ \Pi_1 S_\mu^\e(s) Q_\mu - T_\e(s) Q_\e \right] e_k \right|_{H}^2 ds = 0.\]
Therefore, from the Gaussianity of $Y^{\d/4}_{\mu,1}(\sigma)$, for any $p\geq 2$
\[\lim_{\mu \to 0} \sup_{\sigma \leq T}\E|Y^{\d/4}_{\mu,1}(\sigma)|_{H}^p = 0.\]
Thanks to \eqref{Se-bound}, this implies that if we take $p$ large enough so that
$p(\d-4)/4(p-1)>-1$, we have
\[\begin{array}{l}
\ds{\lim_{\mu\to 0} \E\sup_{t \in\,[0,T]}\le|\int_0^t (t-\sigma)^{\d/4 -1} T_\e(t-\sigma)  Y^{\d/4}_{\mu,1}(\si)d\sigma\r|_H^p}\\
\vs
\ds{\leq  \sup_{0 \leq t \leq T} \|T_\e(t) \|_{{\cal L}(H)} \left( \int_0^T \sigma^{\frac{p(\d -4)}{4(p -1)}} d\sigma\right)^{p -1}\lim_{\mu\to 0} \int_0^T \E \left|Y_{\mu,1}^{\d/4}(\sigma) \right|_{H}^{p} d\sigma=0.}
\end{array}\]

Next, we remark that in view of \eqref{bound-3} and Hypothesis \ref{H5},
  \[\E |Y_{\mu,2}^{\d/4}(\sigma)|_{H^{\d -1}}^2 \leq \ c\,\mu^{-(1 + \frac{\d}2)}\sum_{k=1}^\infty \frac{\lambda_k^2} {\alpha_k^{1-\d}}<\infty,\]
and by Lemma \ref{Smu-bound-lem},
  \[\left| \Pi_1 S_\mu^\e(t) (0, Y_{\mu,2}^{\d/4}(\sigma)) \right|^2_H \leq 2^\d \mu^{1+\d} |Y_{\mu,2}^{\d/4}(\sigma) |^2_{H^{\d-1}}. \]
  Therefore,
  \[\sup_{\sigma \leq t \leq T} \E \left|\Pi_1 S_\mu^\e(t-\sigma) (0, \Pi_2 Y_{\mu,2}^{\d/4}(\sigma) )\right|_{H^{\d-1}}^{p} \leq
  c_p\,\mu^{\frac{p\d}4} \le(\sum_{k=1}^\infty \frac{\lambda_k^2}{\alpha_k^{1-\d}}\r)^{\frac p2}.\]
Therefore, if we pick again $p$ large enough so that $p(\d-4)/4(p-1)>-1$, we get
\[\begin{array}{l}
\ds{\lim_{\mu \to 0} \E \sup_{t \leq T} \le|\int_0^t (t-\sigma)^{\alpha-1} \Pi_1 S_\mu^\e(t-\sigma) (0,Y^\a_{\mu,2}(\si)) d\sigma\r|_H^p}\\
\vs
\ds{\leq T\left( \int_0^T \sigma^{\frac{(\delta-4) p}{4(p-1)}} d \sigma \right)^{p-1}\lim_{\mu\to 0}\sup_{\sigma\leq t\leq T} \E |\Pi_1 S_\mu^\e(t-\sigma) Y_{\mu,2}^{\d/4}(\sigma)|_{H^{\delta-1}}^{p} =0.}
\end{array}\]

  Finally, for any $n \in \nat$, we have
\[\begin{array}{l}
\ds{\int_0^t (t - \sigma)^{\frac{\d-4}4} \left[ \Pi_1 S_\mu^\e(t-\sigma)\Pi_1^\star - T_\e(t-\sigma) \right] Y_{\mu,3}^{\d/4}(\sigma) d\si}\\
\vs
\ds{= \int_0^t (t - \sigma)^{\frac{\d-4}4} \left[\Pi_1 S_\mu^\e(t-\sigma)\Pi_1^\star - T_\e(t-\sigma) \right] \left( P_nY_{\mu,3}^{\d/4}(\sigma)+ (Y_{\mu,3}^{\d/4}(\sigma) - P_nY_{\mu,3}^{\d/4}(\sigma))  \right)d \sigma.}
\end{array}
  \]
By \eqref{bound-1},
  \begin{equation*}
\sup_{\mu>0,\,n \in\,\nat}\E \left|P_nY_{\mu,3}^{\d/4}(\sigma) \right|_{H}^2 \leq \int_0^T s^{-\frac \d 2} \sum_{k=1}^\infty \left| S_\mu^\e(s) Q_\mu e_k \right|_{\H}^2 ds \leq c\,\sum_{k=1}^\infty \frac{\lambda_k^2}{\alpha_k^{1-\frac \d 2}}.
  \end{equation*}
  and
  \begin{equation*}
    \E \left| Y_{\mu,3}^{\d/4}(\sigma) - P_n Y_{\mu,3}^{\d/4}(\sigma) \right|_{H}^2 = \int_0^T \sigma^{-\frac \d 2} \sum_{k=n+1}^\infty \left| \Pi_1 S_\mu^\e(\sigma) Q_\mu e_k \right|_{H}^2 ds \leq c \sum_{k=n+1}^\infty \frac{\lambda_k^2}{\alpha_k^{1 - \frac\d 2}}.
  \end{equation*}
  This implies
\begin{equation}
\label{s6}
 \sup_{\mu>0, n \in \nat} \sup_{\si \in\,[0,T]}\E \left| P_n Y_{\mu,3}^{\d/4}(\sigma) \right|_{H}^{p} <+\infty
 \end{equation}
and
\begin{equation}
\label{s7}
\lim_{n \to +\infty} \sup_{\mu>0}\sup_{\si \in\,[0,T]}\E \left| Y_{\mu,3}^{\d/4}(\sigma) - P_n Y_{\mu,3}^{\d/4}(\sigma) \right|_{\H}^{p} = 0.\end{equation}
This implies that for any $p$ such that $p(\d-4)/4(p-1)>-1$
\[  \begin{array}{l}
  \ds{\E\sup_{t \in\,[0,T]}\le|\int_0^t (t - \sigma)^{\frac{\d-4}4} \left[ \Pi_1 S_\mu^\e(t-\sigma)\Pi_1^\star - T_\e(t-\sigma) \right] Y_{\mu,3}^{\d/4}(\sigma) d\si\r|_H^p}\\
  \vs
    \ds{\leq  \left(\int_0^T \sigma^{\frac{p(\d-4)}{4(p-1)}} d \sigma \right)^{p - 1}\le( \sup_{t \in\,[0,T]} \sup_{|x|_{H} \leq 1} |\Pi_1 S_\mu^\e(t)(P_n x,0) - T_\e(t)P_n x  |_H^{p}
  \int_0^T \E\left| P_n Y_{\mu,3}^{\d/4}(\sigma) \right|_{H}^{p} d\sigma\r.} \\
   \vs
   \ds{ \le.+ \sup_{t  \in\,[0,T]} \left( \|\Pi_1 S_\mu^\e(t) \Pi_1^\star \|_{{\cal L}(H)} + \|T_\e(t) \|_{{\cal L}(H)} \right) \int_0^T \E\left|Y_{\mu,3}^{\d/4}(\sigma) - P_n Y_{\mu,3}^{\d/4}(\sigma) \right|^{p}_\H d \sigma\r)}
  \end{array}
  \]
By  choosing $n$ large enough, \eqref{s6} and \eqref{s7} yield
\[\lim_{\mu \to 0}  \E  \sup_{t \in\,[0,T]} \le|\int_0^t (t - \sigma)^{\frac{\d-4}4} \left[ \Pi_1 S_\mu^\e(t-\sigma)\Pi_1^\star - T_\e(t-\sigma) \right] Y_{\mu,3}^{\d/4}(\sigma) d\si\r|_H^p=0.\]

\end{proof}

\section{Approximation by small friction for multiplicative noise } \label{sec5}

In this section we assume that the space dimension $d=1$ and $D$ is a bounded interval, the diffusion coefficient $G$ satisfies Hypothesis \ref{H3} and the covariance operator $Q$ satisfies the following condition.
\begin{Hypothesis}
\label{H7}
There exists a bounded non-negative sequence $\{\la_k\}_{k \in\,\nat}$ such that
\[Q e_k=\la_k e_k,\ \ \ \ k \in\,\nat.\]
\end{Hypothesis}

We begin by studying the stochastic convolutions
\[
\Gamma_\mu^\e(z)(t) := \int_0^t S_\mu^\e(t-s) G_\mu(z(s),s)dw^Q(s),\ \ \ \ z \in\,L^p(\Omega,C([0,T];\H)),\]
and
\[\Gamma_\e(u)(t) = \int_0^t T_\e(t-s) G_\e(u(s),s) dw^Q(s), u \in\,L^p(\Omega,C([0,T];H)).\]
With the notations introduced in Sections \ref{s2} and \ref{s3}, the regularized system \eqref{regular-intro} can be rewritten as
\begin{equation}
\label{s9}
dz^\e_\mu(t)=\le[A^\e_\mu z^\e_\mu(t)+B_\mu(z^\e_\mu(t),t)\r]\,dt+G_\mu(z^\e_\mu(t),t)\,dw^Q(t),\ \ \ \ z^\e_\mu(0)=(u_0,v_0),
\end{equation}
and the limiting problem \eqref{first-order-eq-w-frict} can be rewritten as
\begin{equation}
\label{s10}
du_\e(t)=\le[A_\e u_\e(t)+B_\e (u_\e(t),t)\r]\,dt+G_\e(u_\e(t),t)\,dw^Q(t),\ \ \ \ \ u_\e(0)=u_0,\end{equation}
where
\[G_\e(u,t)=J_\e^{-1}G(u,t).\]

\begin{Lemma} \label{lem:stoch-conv-Lip}
Under Hypotheses \ref{H3} and \ref{H7}, for any $\mu, \e>0$, $T \geq 0$ and $p>4$ we have
  \[z \in\,L^p(\Omega;C([0,T];\H))\Longrightarrow \Gamma_\mu^\e(z) \in L^p(\Omega;C([0,T];\H)). \]
Moreover, there exists a constant $c:=c(\e,\mu,p,T)$ such that
  \begin{equation}
  \label{s15}
  \E | \Gamma_\mu^\e(z_1) - \Gamma_\mu^\e(z_2) |^p_{C([0,T];\H)} \leq c_p\, \int_0^T\E |\Pi_1 z_1-\Pi_1 z_2|^p_{C([0,\si];H)}\,d\si.\end{equation}
\end{Lemma}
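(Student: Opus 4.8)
The estimate \eqref{s15} will follow from the factorization method combined with the uniform-in-$\mu$ bounds on $S^\e_\mu$ obtained in Section \ref{sec3}, specialized to the multiplicative setting in dimension $d=1$. Write $\alpha\in(0,1/2)$ to be chosen, and set
\[
\Gamma^\e_\mu(z)(t)=\frac{\sin(\pi\alpha)}{\pi}\int_0^t(t-\sigma)^{\alpha-1}S^\e_\mu(t-\sigma)Y^\alpha_\mu(z)(\sigma)\,d\sigma,\qquad
Y^\alpha_\mu(z)(\sigma)=\int_0^\sigma(\sigma-s)^{-\alpha}S^\e_\mu(\sigma-s)G_\mu(z(s),s)\,dw^Q(s).
\]
The plan is: (i) bound $Y^\alpha_\mu(z_1)-Y^\alpha_\mu(z_2)$ in $L^p(\Omega;H^{\beta})$ for a suitable $\beta$ and $\sigma\le T$, using the Burkholder–Davis–Gundy inequality for stochastic integrals and the Lipschitz bound on $G_\mu$; (ii) pass from $Y^\alpha_\mu$ to $\Gamma^\e_\mu(z)$ by the deterministic convolution with $(t-\sigma)^{\alpha-1}S^\e_\mu(t-\sigma)$, using Young's inequality and the bound $\|S^\e_\mu(t)\|_{\mathcal L(\mathcal H_\theta)}\le M_{\mu,\e}e^{-\omega_{\mu,\e}t}$; (iii) collect the constants, which may depend on $\mu,\e,p,T$ but not on $z_1,z_2$, to arrive at \eqref{s15}.

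For step (i), recall $G_\mu(z,t)=\frac1\mu(0,G(\Pi_1 z,t))$ and $Qe_k=\lambda_k e_k$ with $\{\lambda_k\}$ bounded (Hypothesis \ref{H7}). Expanding the stochastic integral in the basis,
\[
\E\bigl|Y^\alpha_\mu(z_1)(\sigma)-Y^\alpha_\mu(z_2)(\sigma)\bigr|^p_{\mathcal H}
\le c_p\,\E\Bigl(\int_0^\sigma(\sigma-s)^{-2\alpha}\sum_{k}\bigl|S^\e_\mu(\sigma-s)(0,[G(\Pi_1 z_1(s),s)-G(\Pi_1 z_2(s),s)]Qe_k/\mu)\bigr|^2_{\mathcal H}\,ds\Bigr)^{p/2}.
\]
Here the key dimensional input is $d=1$: the Sobolev embedding $H^{1/2+\eta}(D)\hookrightarrow L^\infty(D)$ on a bounded interval lets one control $|[G(x_1,s)-G(x_2,s)]h|_H\le \kappa_G|x_1-x_2|_H|h|_\infty$ by $\kappa_G|x_1-x_2|_H\,\alpha_k^{1/4+\eta/2}\lambda_k|e_k|_H$, so that the $k$-sum converges provided $\{\lambda_k\}$ is bounded and one uses \eqref{s2} (with $\gamma=1$) to gain $\mu^2$ and a factor $\alpha_k^{-1}$ from $S^\e_\mu$ applied to $(0,\cdot)$; choosing $\alpha$ small enough that $\int_0^\sigma(\sigma-s)^{-2\alpha}\,ds<\infty$ and absorbing the resulting $\sigma$-dependence, one obtains
\[
\sup_{\sigma\le t}\E\bigl|Y^\alpha_\mu(z_1)(\sigma)-Y^\alpha_\mu(z_2)(\sigma)\bigr|^p_{\mathcal H}\le c(\e,\mu,p,T)\int_0^t\E\,|\Pi_1 z_1-\Pi_1 z_2|^p_{C([0,\sigma];H)}\,d\sigma.
\]
For step (ii), since $p>4$ one can pick $\alpha\in(1/p,1/2)$, so $t\mapsto t^{\alpha-1}$ lies in $L^{p/(p-1)}(0,T)$; then the deterministic factorization integral maps $L^p(0,T;\mathcal H)$ boundedly into $C([0,T];\mathcal H)$ with norm depending on $\alpha,p,T,M_{\mu,\e}$, and applying this to $Y^\alpha_\mu(z_1)-Y^\alpha_\mu(z_2)$ and then taking expectations yields \eqref{s15}. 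The membership $\Gamma^\e_\mu(z)\in L^p(\Omega;C([0,T];\mathcal H))$ is obtained the same way, using $|G(x,s)|_H\le\kappa_G(1+|x|_H)|\cdot|_\infty$ in place of the Lipschitz bound.

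The main obstacle is step (i): making the $k$-sum converge with a bound uniform over $s$ and independent of $z_1,z_2$. This is exactly where the restriction to $d=1$ enters, since it is the one-dimensional Sobolev embedding $H^{1/2+\eta}\hookrightarrow L^\infty$ that converts the $L^\infty$-type estimate on $G$ in Hypothesis \ref{H3} into a summable bound against $\alpha_k^{-\rho}\lambda_k^2$ with $\rho>1$, after the $\mu^2/\alpha_k$ gain from \eqref{s2}; in higher dimensions this gain is insufficient, which is the analytic reason the multiplicative case is confined to $d=1$. The bound on $\{\lambda_k\}$ from Hypothesis \ref{H7} then guarantees convergence. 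Everything else — BDG, Young's inequality, Gr\"onwall-type bookkeeping of the time integrals — is routine.
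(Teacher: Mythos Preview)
Your overall plan — factorization, BDG, then the deterministic convolution estimate — matches the paper. But step (i) contains a genuine gap that breaks the argument.

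You write that one can ``use \eqref{s2} (with $\gamma=1$) to gain $\mu^2$ and a factor $\alpha_k^{-1}$ from $S^\e_\mu$ applied to $(0,\cdot)$.'' The bound \eqref{s2} (and likewise \eqref{bound-1}) yields $\alpha_k^{-1}$-type decay only because it is applied to $(0,e_k)$, an \emph{eigenvector} of $A$: the proof of \eqref{bound-1} uses $|Qe_k|_{H^{-1}}=\lambda_k\alpha_k^{-1/2}$. The vector you feed in, $[G(\Pi_1 z_1,s)-G(\Pi_1 z_2,s)]e_k$, is \emph{not} an eigenvector, so you cannot read off any $\alpha_k^{-1}$ from its $H^{-1}$-norm. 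With only $|[G(x_1,s)-G(x_2,s)]\lambda_k e_k|_H\le c\,|x_1-x_2|_H$ (using $\lambda_k$ bounded and $|e_k|_\infty\le c$ in $d=1$), the resulting $k$-sum has infinitely many comparable terms and diverges. Your Sobolev bound $|e_k|_\infty\le c\,\alpha_k^{1/4+\eta/2}$ is also in the wrong direction: it is a loss, and the useful fact in $d=1$ is that $|e_k|_\infty$ is uniformly bounded.

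The missing idea, which the paper supplies, is to \emph{decouple} the semigroup from $G$ before summing. Because $S^\e_\mu$ acts block-diagonally on $\mathrm{span}\{e_{2h-1},e_{2h}\}$, one has
\[
|\Pi_1 S^\e_\mu(t)(0,v)|_H^2\le c\sum_{h}|\Pi_1 S^\e_\mu(t)(0,e_h)|_H^2\,|\langle v,e_h\rangle_H|^2.
\]
Substituting $v=[G(\Pi_1 z_1,s)-G(\Pi_1 z_2,s)]e_k$, summing in $k$, and \emph{swapping} the $k$- and $h$-sums turns the inner sum into $|[G^\star(\Pi_1 z_1,s)-G^\star(\Pi_1 z_2,s)]e_h|_H^2$, which by \eqref{s12} is bounded by $c\,|\Pi_1 z_1(s)-\Pi_1 z_2(s)|_H^2$ uniformly in $h$ (this is where $d=1$ actually enters). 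What remains is $\int_0^\sigma s^{-2\alpha}\sum_h|\Pi_1 S^\e_\mu(s)(0,e_h)|_H^2\,ds$, and now \eqref{bound-1} applies legitimately to give $\sum_h\alpha_h^{-(1-2\alpha)}$, convergent precisely when $\alpha<1/4$. This is the reason for the sharper window $\alpha\in(1/p,1/4)$ and the restriction $p>4$, not merely $\alpha\in(1/p,1/2)$ as you state. The $\Pi_2$-component $Y^\mu_2$ must be handled separately in $H^{\theta-1}$ via \eqref{bound-3}, which produces the $\mu$-dependent constant $\mu^{-p(1+2\alpha)/2}$; your single $\mathcal H$-norm estimate for $Y^\alpha_\mu$ glosses over this split.
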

\begin{proof}
It is sufficient to prove \eqref{s15}. By the factorization method, for any $\alpha \in (0,1/2)$ we have
  \[\begin{array}{l}
  \ds{\Gamma_\mu^\e(z_1)(t) - \Gamma_\mu^\e(z_2)(t) = \frac{\sin(\pi\alpha)}{\pi} \int_0^t (t-\sigma)^{\alpha-1}  S_\mu^\e(t-\sigma) Y^\mu(\sigma)d\sigma,}\end{array}\]
  where
  \[Y^\mu(\sigma)=:(Y_1^\mu(\sigma),Y_2^\mu(\sigma)),\]
  with
   \[Y^\mu_1(\sigma) = \int_0^\sigma (\sigma - s)^{-\alpha} \Pi_1 S_\mu^\e(\sigma - s) \le[G_\mu(z_1(s),s) - G_\mu(z_2(s),s)\r]dw^Q(s)\]
  and
  \[Y^\mu_2(\sigma) = \int_0^\sigma (\sigma -s)^{-\alpha} \Pi_2 S_\mu^\e(\sigma - s) \le[G_\mu(z_1(s),s) - G_\mu(z_2(s),s)\r]  dw^Q(s).\]
  Then, for any $p>1/\a$ we have
  \begin{equation}
  \label{s14}
\begin{array}{l}
\ds{  \le|\Gamma_\mu^\e(z_1)(t) - \Gamma_\mu^\e(z_2)(t)\r|_\H^p\leq c_{\mu,\e,p} \le(\int_0^T \sigma^{\frac{(\a-1)p}{p-1}}\,d\sigma\r)^{p-1}\int_0^t\le(|Y^\mu_1(\sigma)|_H^p+|Y^\mu_2(\sigma)|_{H^{-1}}^p\r)\,d\si.}
\end{array}
\end{equation}

By the Burkholder-Davis-Gundy inequality, we have
  \[\begin{array}{l}
  \ds{ E |Y^\mu_1(\sigma)|_{H}^{p} }\\
  \vs
  \ds{\leq \frac{c_p}{\mu^p}\, \E \left( \int_0^\sigma (\sigma -s)^{-2\alpha} \sum_{k=1}^\infty \la_k^2\left|\Pi_1 S_\mu^\e(\sigma -s)(0, [G(\Pi_1 z_1(s),s) - G(\Pi_1 z_2(s),s)]  e_k )\right|_{H}^2 ds\right)^{\frac p2}.}
  \end{array}\]
Now, for any $v \in\,H^{-1}$ we have
\[\begin{array}{l}
\ds{\Pi_1 S_\mu^\e(t)(0,v)}\\
\vs
\ds{=\sum_{h=1}^\infty \le[\le<\Pi_1 S^\e_\mu(t)(0,e_{2h-1}),e_{2h-1}\r>_H\le<v,e_{2h-1}\r>_H+\le<\Pi_1 S^\e_\mu(t)(0,e_{2h}),e_{2h-1}\r>_H\le<v,e_{2h}\r>_H\r]e_{2h-1}}\\
\vs
\ds{+\sum_{h=1}^\infty \le[\le<\Pi_1 S^\e_\mu(t)(0,e_{2h-1}),e_{2h}\r>_H\le<v,e_{2h-1}\r>_H+\le<\Pi_1 S^\e_\mu(t)(0,e_{2h}),e_{2h}\r>_H\le<v,e_{2h}\r>_H\r]e_{2h}.}
\end{array}\]
This easily implies
\[
\begin{array}{l}
\ds{|\Pi_1 S_\mu^\e(t)(0,v)|_H^2\leq c\sum_{h=1}^\infty\le|\Pi_1 S^\e_\mu(t)(0,e_{h})\r|_H^2\le|\le<v,e_{h}\r>_H\r|^2,}
\end{array}\]
so that
\[\begin{array}{l}
\ds{\sum_{k=1}^\infty \left|\Pi_1 S_\mu^\e(\sigma -s)(0, [G(\Pi_1z_1(s),s) - G(\Pi_1z_2(s),s)]  e_k )\right|_{H}^2}\\
\vs
\ds{\leq c\,\sum_{k=1}^\infty \sum_{h=1}^\infty \le|\Pi_1 S^\e_\mu(\sigma -s)(0,e_{h})\r|_H^2\le|\le<[G(\Pi_1z_1(s),s) - G(\Pi_1z_2(s),s)]  e_k,e_h\r>_H\r|^2}\\
\vs
\ds{=
\sum_{h=1}^\infty \le|\Pi_1 S^\e_\mu(\sigma -s)(0,e_{h})\r|_H^2 \sum_{k=1}^\infty \le|\le<[G^\star(\Pi_1z_1(s),s) - G^\star(\Pi_1z_2(s),s)]  e_h,e_k\r>_H\r|^2}\\
\vs
\ds{=\sum_{h=1}^\infty \le|\Pi_1 S^\e_\mu(\sigma -s)(0,e_{h})\r|_H^2  \le|[G^\star(\Pi_1z_1(s),s) - G^\star(\Pi_1z_2(s),s)]  e_h \r|_H^2.}
\end{array}\]
Therefore, thanks to \eqref{s12} and \eqref{bound-1}, for any $\a<1/2$ we get
\[\begin{array}{l}
\ds{E |Y^\mu_1(\sigma)|_{H}^{p} \leq \frac{c_{p,T}}{\mu^p}\, \E \left( \int_0^\sigma (\sigma -s)^{-2\alpha} \sum_{h=1}^\infty \le|\Pi_1 S^\e_\mu(\si-s)(0,e_{h})\r|_H^2|\Pi_1z_1(s)-\Pi_1z_2(s)|_{H}^2 ds\right)^{\frac p2}}\\
\vs
\ds{\leq \frac{c_{p,T}}{\mu^p}\E\,|\Pi_1 z_1-\Pi_1 z_2|^p_{C([0,T];H)}\le(\int_0^\sigma s^{-2\alpha} \sum_{h=1}^\infty \le|\Pi_1 S^\e_\mu(s)(0,e_{h})\r|_H^2 ds\right)^{\frac p2}}\\
\vs
\ds{ \leq c_{p,T}\,\E\,|\Pi_1 z_1-\Pi_1 z_2|^p_{C([0,T];H)}\le(\sum_{h=1}^\infty\frac 1{\a_h^{1-2\a}}\r)^{\frac p2},}
\end{array}\]
and if we take  $\a<1/4$, we can conclude that
  \begin{equation} \label{eq:Y_1^2m-bound-mult}
    \E |Y^\mu_1(\sigma)|_{H}^{p} \leq c_{p,T}\, \E |\Pi_1 z_1-\Pi_1 z_2|_{C([0,\sigma];H)}^{p}.
  \end{equation}
By proceeding in the same way as for $Y^\mu_1(\si)$, for any $\theta<1/2$ we have
  \[\begin{array}{l}
  \ds{\E |Y^\mu_2(\sigma)|_{H^{\theta-1}}^{p} \leq \frac{c_{p,T}}{\mu^p}\E\,|\Pi_1 z_1-\Pi_1 z_2|^p_{C([0,\si];H)}\le(\int_0^\sigma s^{-2\alpha} \sum_{h=1}^\infty \le|\Pi_2 S^\e_\mu(s)(0,e_{h})\r|_{H^{\theta-1}}^2 ds\right)^{\frac p2},}
  \end{array}\]
  and then, thanks to
\eqref{bound-3}
  \begin{equation}\label{eq:Y_2^2m-bound-mult}
  \E |Y^\mu_2(\sigma)|_{H^{-1}}^p \leq c_{p,T}\,\mu^{-\frac{p(1 + 2\alpha)}{2}}\, \E |\Pi_1 z_1-\Pi_1 z_2|_{C([0,\si];H)}^{p}.
  \end{equation}
Therefore, according to \eqref{s14}, if $p>4$ we can find $\a_p \in\,(1/p,1/4)$ such that
\[\E\,\le|\Gamma_\mu^\e(z_1) - \Gamma_\mu^\e(z_2)\r|_{C([0,T];\H)}^p\leq c_p\,\int_0^T\E\,|\Pi_1 z_1-\Pi_1 z_2|_{C([0,\si];H)}^p\,d\si,\]
for a constant $c$ depending on $p,\ \mu,\ \e$ and $T$.
\end{proof}

\begin{Remark}
{\em From the proof of the Lemma above, we easily see that, as a consequence of estimates \eqref{energy-est-1} and \eqref{s2}, for any $z_1, z_2 \in\,L^p(\Omega;C([0,T];\H))$
  \begin{equation}
  \label{s30}
  \sup_{\mu>0} \E | \Pi_1\Gamma_\mu^\e(z_1) - \Pi_1\Gamma_\mu^\e(z_2) |^p_{C([0,T];\H)} \leq c_p\, \int_0^T\E |\Pi_1 z_1-\Pi_1 z_2|^p_{C([0,\si];\H)}\,d\si
,\end{equation}
for a constant $c=c(\e,p,T)>0$.}
\end{Remark}

In Lemma \ref{lem:stoch-conv-Lip} we have proven that the mapping
\[z \in\,L^p(\Omega;C([0,T];\H))\mapsto \Gamma_\mu^\e(z) \in\,L^p(\Omega;C([0,T];\H)),\]
is Lipschitz continuous. Therefore, as the mapping $B_\mu(\cdot,t):\H\to \H$, is Lipschitz continuous, uniformly for $t \in\,[0,T]$, we have that for any initial condition $z_0=(u_0,v_0) \in\,\H$, system \eqref{s9} admits uniqued adapted mild solution $z_\mu^\e \in\,L^p(\Omega;C([0,T];\H))$.

\begin{Lemma}
\label{lem:stoch-conv-epsilon-Lip}
 Under Hypotheses \ref{H3} and \ref{H7}, for any $\e, T \geq 0$ and any $p>4$
   \[u \in\,L^p(\Omega;C([0,T];H)\Longrightarrow \Gamma_\e(u) \in L^p(\Omega;C([0,T];H)). \]
   Moreover,
there exists a constant $c:=c(\e,p,T)$ such that for any $u,v \in\,L^p(\Omega;C([0,T];H))$
 \begin{equation}
 \label{s46}
 \E \left| \Gamma_\e(u) - \Gamma_\e(v) \right|_{C([0,T];H)}^{p} \leq c\, \int_0^T\E|u-v|_{C([0,\si];H)}^{p}\,d\si.\end{equation}
 If we assume that
 \[\sum_{k=1}^\infty \la_k^2<\infty,\]
 then the constant $c$ in \eqref{s46} is independent of $\e>0$.
\end{Lemma}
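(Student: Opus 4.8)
The plan is to follow, almost verbatim, the scheme of the proof of Lemma~\ref{lem:stoch-conv-Lip}, replacing the group $S_\mu^\e(t)$ by the semigroup $T_\e(t)$ and $G_\mu$ by $G_\e=J_\e^{-1}G$. It is enough to prove \eqref{s46}: the membership claim then follows by taking $v\equiv 0$ and observing, by the same factorization argument together with \eqref{bound-2}, that the convolution $\Gamma_\e(0)(t)=\int_0^t T_\e(t-s)J_\e^{-1}G(0,s)\,dw^Q(s)$ (which has deterministic, hence Gaussian, integrand) lies in $L^p(\Omega;C([0,T];H))$. For $\a\in(0,1/2)$ the factorization formula (see \cite[Chapter~5]{dpz}) gives
\[\Gamma_\e(u)(t)-\Gamma_\e(v)(t)=\frac{\sin(\pi\a)}{\pi}\int_0^t(t-\si)^{\a-1}T_\e(t-\si)\,Y_\e(\si)\,d\si,\qquad Y_\e(\si)=\int_0^\si(\si-s)^{-\a}T_\e(\si-s)\big[G_\e(u(s),s)-G_\e(v(s),s)\big]\,dw^Q(s).\]
Since $\|T_\e(t)\|_{{\cal L}(H)}\le 1$ by \eqref{Se-bound}, H\"older's inequality in $\si$ gives, for any $p>1/\a$,
\[\sup_{t\le T}|\Gamma_\e(u)(t)-\Gamma_\e(v)(t)|_H^p\le c\,\Big(\int_0^T\si^{\frac{(\a-1)p}{p-1}}\,d\si\Big)^{p-1}\int_0^T|Y_\e(\si)|_H^p\,d\si,\]
the first integral being finite precisely when $\a>1/p$. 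So everything reduces to an $L^p(\Omega)$ estimate of $Y_\e(\si)$.

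For this I would apply the Burkholder--Davis--Gundy inequality and use that $[G_\e(x,s)-G_\e(y,s)]Qe_k=\la_k J_\e^{-1}[G(x,s)-G(y,s)]e_k$ to obtain
\[\E|Y_\e(\si)|_H^p\le c_p\,\E\Big(\int_0^\si(\si-s)^{-2\a}\sum_k\la_k^2\,\big|T_\e(\si-s)J_\e^{-1}[G(u(s),s)-G(v(s),s)]e_k\big|_H^2\,ds\Big)^{p/2}.\]
The inner sum is handled exactly as in Lemma~\ref{lem:stoch-conv-Lip}: since $T_\e(t)J_\e^{-1}$ leaves each two‑dimensional eigenspace $\text{span}\{e_{2j-1},e_{2j}\}$ of $A$ invariant and these eigenspaces are mutually orthogonal, one has $|T_\e(t)J_\e^{-1}w|_H^2\le 2\sum_h|\langle w,e_h\rangle_H|^2\,|T_\e(t)J_\e^{-1}e_h|_H^2$; applying this with $w=[G(u(s),s)-G(v(s),s)]e_k$ (which is a well-defined element of $H$ because $d=1$ forces $\sup_h|e_h|_\infty<\infty$), summing over $k$ by Parseval, and then using \eqref{s12} together with the boundedness of $\{\la_k\}$ from Hypothesis~\ref{H7}, I get
\[\sum_k\la_k^2\,\big|T_\e(\si-s)J_\e^{-1}[G(u(s),s)-G(v(s),s)]e_k\big|_H^2\le c_\e\,|u(s)-v(s)|_H^2\sum_h\big|T_\e(\si-s)J_\e^{-1}e_h\big|_H^2.\]

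Since $Q_\e e_h=\la_h J_\e^{-1}e_h$, estimate \eqref{bound-2} (equivalently, the argument behind it, which rests only on the scalar identity $\partial_t u_\e=-\a_k J_\e^{-1}u_\e$) gives $\int_0^\infty s^{-2\a}|T_\e(s)J_\e^{-1}e_h|_H^2\,ds\le c(\e,\a)\,\a_h^{2\a-1}$, and the Weyl asymptotics $\a_h\asymp h^2$ (valid since $d=1$) make $\sum_h\a_h^{2\a-1}$ finite exactly when $\a<1/4$. Pulling $\sup_{s\le\si}|u(s)-v(s)|_H^p$ out of the integral then gives $\E|Y_\e(\si)|_H^p\le c(\e,p,T)\,\E|u-v|_{C([0,\si];H)}^p$, and, since $p>4$, choosing $\a=\a_p\in(1/p,1/4)$ in the two displays of the first paragraph yields \eqref{s46}.

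For the last assertion I would bypass the $\e$-dependent estimate entirely: when $\sum_k\la_k^2<\infty$, instead of the eigenspace decomposition use only $\|T_\e(t)J_\e^{-1}\|_{{\cal L}(H)}\le 1$ together with $|[G(u(s),s)-G(v(s),s)]e_k|_H\le\kappa_G(T)(\sup_h|e_h|_\infty)|u(s)-v(s)|_H$, so that $\sum_k\la_k^2\,|T_\e(\si-s)J_\e^{-1}[G(u(s))-G(v(s))]e_k|_H^2\le c\,\big(\sum_k\la_k^2\big)|u(s)-v(s)|_H^2$ with $c$ free of $\e$; then $\int_0^\si(\si-s)^{-2\a}\,ds<\infty$ for every $\a<1/2$ and the rest of the argument produces an $\e$-independent constant. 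The delicate point throughout is precisely the control of $\sum_k\la_k^2|T_\e(\si-s)J_\e^{-1}[G(u(s))-G(v(s))]e_k|_H^2$: the eigenspace-decomposition trick and the adjoint identity $\sum_k|\langle\,\cdot\,e_k,e_h\rangle_H|^2=|(\,\cdot\,)^\star e_h|_H^2$ are what force $d=1$ (both so that $\{e_h\}$ is uniformly bounded in $L^\infty$ and so that $\sum_h\a_h^{2\a-1}$ converges for some $\a>0$), while the degeneration of the smoothing of $T_\e(t)$ as $\e\downarrow 0$ is exactly why the $\e$-uniform statement requires $Q$ to be trace class.
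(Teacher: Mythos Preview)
Your argument is correct and is exactly the approach the paper takes: its entire proof reads ``The proof is obtained from the same arguments used in the proof of Lemma~\ref{lem:stoch-conv-Lip}, just by replacing the use of Lemmas~\ref{energy-est-lem}, \ref{Smu-bound-lem} and \ref{l1}, with the use of Lemma~\ref{l7},'' and you have faithfully fleshed out precisely that substitution (factorization, BDG, the eigenspace decomposition, then \eqref{bound-2} for the $\e$-dependent bound and \eqref{s45}/$\|T_\e(t)J_\e^{-1}\|_{\mathcal L(H)}\le 1$ for the trace-class, $\e$-uniform case).
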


\begin{proof}
The proof is obtained from the same arguments used in the proof of Lemma \ref{lem:stoch-conv-Lip}, just by replacing the use of Lemmas \ref{energy-est-lem}, \ref{Smu-bound-lem} and \ref{l1}, with the use of Lemma \ref{l7}.
\end{proof}

As a consequence of this lemma, since the mapping $B_\e(\cdot,t):H\to H$ is Lipschitz continuous, uniformly for $t \in\,[0,T]$, we have that for any initial condition $u_0 \in\,\H$, system \eqref{s9} admits uniqued adapted mild solution $u_\e \in\,L^p(\Omega;C([0,T];H))$.

\begin{Theorem} \label{thm:Gamma-mu-to-Gamma-e}
  For any  fixed $\e>0$, $T>0$ and $p\geq 1$, there exists $c:=c(T,\e,p)$ such that for any $u \in L^{p}(\Omega, C([0,T];H))$
  \[\lim_{\mu \to 0} \E \left|\Pi_1\Gamma_\mu^\e((u,0)) - \Gamma_\e(u) \right|_{C([0,T];H)}^{p}=0.\]
\end{Theorem}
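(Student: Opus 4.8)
The plan is to adapt the factorization argument from Lemma~\ref{lem:stoch-conv-Lip} to the difference $\Pi_1\Gamma_\mu^\e((u,0))-\Gamma_\e(u)$, splitting it into several pieces according to whether one replaces $\Pi_1 S_\mu^\e$ by $T_\e$ or keeps the small-mass term. Fix $\a\in(0,1/4)$ to be chosen large enough that $p(\a-1)/(p-1)>-1$. By the stochastic factorization formula,
\[
\frac{\pi}{\sin(\pi\a)}\Big(\Pi_1\Gamma_\mu^\e((u,0))(t)-\Gamma_\e(u)(t)\Big)
=\int_0^t(t-\sigma)^{\a-1}\Pi_1 S_\mu^\e(t-\sigma)\Pi_1^\star \,Z_\mu(\sigma)\,d\sigma
+\int_0^t(t-\sigma)^{\a-1}\Pi_1 S_\mu^\e(t-\sigma)(0,W_\mu(\sigma))\,d\sigma,
\]
plus a term where $\Pi_1 S_\mu^\e$ is replaced by $T_\e$ acting on the analogue of $Z_\e$; here
\[
Z_\mu(\sigma)=\int_0^\sigma(\sigma-s)^{-\a}\big[\Pi_1 S_\mu^\e(\sigma-s)\Pi_1^\star-T_\e(\sigma-s)\big]G_\e(u(s),s)\,dw^Q(s),
\]
$W_\mu(\sigma)$ collects the $\Pi_2$-component stochastic integral of $G_\mu(u(s),s)$, and one also carries along $Y_\mu(\sigma)=\int_0^\sigma(\sigma-s)^{-\a}\Pi_1 S_\mu^\e(\sigma-s)\Pi_1^\star G_\e(u(s),s)\,dw^Q(s)$ so that the $T_\e$--$S_\mu^\e$ discrepancy is isolated exactly as in the proof of Theorem~\ref{conv-in-mu-thm}. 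The structure is: (i) $Z_\mu$ small because the two semigroups converge; (ii) the $W_\mu$ contribution small because it carries a factor $\mu^{1+\d}$ by Lemma~\ref{Smu-bound-lem}; (iii) the genuine $T_\e$ versus $S_\mu^\e$ gap on $Y_\mu$ handled by the uniform convergence \eqref{Smu-to-Se-1} together with a projection $P_n$, exactly as in Section~\ref{sec4}.

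For step (i), by the Burkholder--Davis--Gundy inequality and the computation in Lemma~\ref{lem:stoch-conv-Lip} that diagonalizes $\Pi_1 S_\mu^\e(t)(0,\cdot)$, one bounds $\E|Z_\mu(\sigma)|_H^p$ by a constant times $\E|u|^p_{C([0,T];H)}$ times a power of
\[
\int_0^T s^{-2\a}\sum_{h=1}^\infty\big\|\big[\Pi_1 S_\mu^\e(s)\Pi_1^\star-T_\e(s)\big]\,(\text{basis-related vectors})\big\|^2\,ds,
\]
and this integral tends to $0$ as $\mu\to0$ by dominated convergence: the integrand is dominated uniformly in $\mu$ using \eqref{bound-1} and \eqref{bound-2} (here $d=1$ so $\sum_h \a_h^{-(1-2\a)}<\infty$ for $\a<1/4$ is not even needed—boundedness of $\{\la_k\}$ and one-dimensionality give summability directly, via $\a_h\sim h^2$), and it converges pointwise in $s$ by the corollary \eqref{Smu-to-Se-at-a-point}. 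Then $\E|Z_\mu(\sigma)|_H^p\to0$ uniformly in $\sigma\in[0,T]$, and the convolution against $(t-\sigma)^{\a-1}T_\e(t-\sigma)$, bounded by \eqref{Se-bound}, passes this to $0$ after taking $\E\sup_{t\le T}$ and using Young/Jensen on the singular kernel. For step (ii), by \eqref{bound-3} one gets $\E|W_\mu(\sigma)|_{H^{\d-1}}^p\le c\,\mu^{-p(1+\d/2)}\E|u|^p_{C([0,T];H)}$ (up to renaming exponents), and by Lemma~\ref{Smu-bound-lem} with $\gamma=\d$ the factor $\Pi_1 S_\mu^\e(t-\sigma)(0,\cdot)$ contributes $\mu^{(1+\d)/2}$ to the norm, i.e.\ $\mu^{1+\d}$ after squaring; the net power of $\mu$ is $p(1+\d)/2 - p(1+\d/2)/2\cdot 2$—arranged (exactly as in Section~\ref{sec4}, choosing the factorization exponent $\a$ suitably, e.g.\ $\a=\d/4$) to be a positive power $\mu^{p\d/4}$, so this term vanishes as $\mu\to0$.

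For step (iii), write $Y_\mu(\sigma)=P_n Y_\mu(\sigma)+(I-P_n)Y_\mu(\sigma)$; the tail is controlled uniformly in $\mu$ by \eqref{bound-1} summed over $k>n$ (again finite and small by one-dimensionality and boundedness of $\{\la_k\}$), giving $\sup_{\mu>0}\sup_{\sigma\le T}\E|(I-P_n)Y_\mu(\sigma)|_H^p\to0$ as $n\to\infty$, while on the finite-dimensional part $\|[\Pi_1 S_\mu^\e(t)\Pi_1^\star - T_\e(t)]P_n\|_{\mathcal L(H)}\to0$ uniformly in $t\le T$ by \eqref{Smu-to-Se-1}, together with the uniform bound $\sup_\mu\sup_\sigma\E|P_nY_\mu(\sigma)|_H^p<\infty$ from \eqref{bound-1}. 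Choosing $n$ large, then $\mu$ small, kills this term. Combining (i)--(iii), and using that the singular kernel $(t-\sigma)^{\a-1}$ is integrable with the chosen $\a$ so that $\E\sup_{t\le T}$ of each convolution is controlled by $\int_0^T\E|\cdot(\sigma)|^p\,d\sigma$, yields the claim. The main obstacle is step (i): one must check that the dominating function for the dominated-convergence argument—namely the $\mu$-uniform bound on $\int_0^T s^{-2\a}\sum_h\|[\Pi_1 S_\mu^\e(s)\Pi_1^\star-T_\e(s)](\cdot)\|^2\,ds$—is genuinely integrable and $\mu$-independent; this is where the interplay of the near-$0$ estimate \eqref{s2} (which gives the $s^{-2\a}$-integrable behavior near $s=0$) and the exponential-decay estimate \eqref{energy-est-2}/\eqref{Se-bound} (controlling large $s$) must be combined carefully, exactly mirroring the delicate analysis of $I_3$ in the proof of Theorem~\ref{conv-in-mu-thm}.
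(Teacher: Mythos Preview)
Your strategy is essentially the paper's: a three–term decomposition via stochastic factorization, with (i) an inner semigroup–difference term handled by dominated convergence, (ii) a $\Pi_2$–contribution carrying a positive power of $\mu$ from Lemma~\ref{Smu-bound-lem} and \eqref{bound-3}, and (iii) an outer semigroup–difference term treated by projecting onto $H_n$ and using \eqref{Smu-to-Se-1}. These are precisely the paper's $I_1^\mu$, $I_3^\mu$, $I_2^\mu$.

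However, the explicit formulas you write for $Z_\mu$ and $Y_\mu$ are wrong, and as written your displayed identity is not an identity. The inner integrand of $\Pi_1\Gamma_\mu^\e((u,0))$ after factorization is
\[
S_\mu^\e(\sigma-s)\,G_\mu((u(s),0),s)=\tfrac{1}{\mu}\,S_\mu^\e(\sigma-s)\big(0,\,G(u(s),s)\cdot\big),
\]
not $S_\mu^\e(\sigma-s)\Pi_1^\star G_\e(u(s),s)=S_\mu^\e(\sigma-s)\big(J_\e^{-1}G(u(s),s)\cdot,\,0\big)$. Thus the correct inner pieces are
\[
Y_2^\mu(\sigma)=\int_0^\sigma(\sigma-s)^{-\a}\Pi_1 S_\mu^\e(\sigma-s)G_\mu((u(s),0),s)\,dw^Q(s),\qquad
Y_3^\mu(\sigma)=\int_0^\sigma(\sigma-s)^{-\a}\Pi_2 S_\mu^\e(\sigma-s)G_\mu((u(s),0),s)\,dw^Q(s),
\]
and the inner difference is $Y_1^\mu=Y_2^\mu-\int_0^\sigma(\sigma-s)^{-\a}T_\e(\sigma-s)G_\e(u(s),s)\,dw^Q(s)$. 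Consequently, the pointwise convergence needed in step (i) is \eqref{Smu-to-Se-3-at-a-point} (i.e.\ $\frac{1}{\mu}\Pi_1 S_\mu^\e(t)(0,y)\to T_\e(t)J_\e^{-1}y$), not \eqref{Smu-to-Se-at-a-point} as you cite. Once these formulas are corrected, your steps (i)–(iii) go through exactly as in the paper and the proof is complete.
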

\begin{proof}
Once again, by the factorization method,
 we can write
  \[\begin{array}{l}
  \ds{\frac \pi{\sin(\pi \a)}\le[\Pi_1\Gamma_\mu^\e(u,0)(t) - \Gamma_\e(u)(t)\r]=\int_0^t (t-\sigma)^{\alpha -1} T_\e(t-\sigma)Y^\mu_1(\sigma) d\sigma}\\
  \vs
  \ds{+ \int_0^t (t-\sigma)^{\alpha-1} [\Pi_1 S_\mu^\e(t-\sigma) \Pi_1^\star - T_\e(t-\sigma)] Y^\mu_2(\sigma) d\sigma}\\
  \vs
  \ds{+ \int_0^t (t-\sigma)^{\alpha-1} \Pi_1 S_\mu^\e(t-\sigma) \left(0, Y^\mu_3(\sigma) \right)  d\sigma}\\
  \vs
  \ds{:= I^\mu_1(t) + I^\mu_2(t) + I^\mu_3(t).}
  \end{array}\]
  where
  \[\begin{array}{l}
  \ds{Y^\mu_1(\sigma) = \int_0^\sigma (\sigma-s)^{-\alpha}[\Pi_1 S_\mu^\e(\sigma-s) G_\mu((u(s),0),s) - T_\e(\sigma-s) G_\e(u(s),s)]dw^Q(s) }\\
  \ds{Y^\mu_2(\sigma) = \int_0^\sigma (\sigma-s)^{-\alpha} \Pi_1 S_\mu^\e(\sigma -s) G_\mu((u(s),0),s)  dw^Q(s)}\\
  \ds{Y^\mu_3(\sigma) = \int_0^\sigma (\sigma - s)^{-\alpha} \Pi_2 S_\mu^\e(\sigma -s) G_\mu((u(s),0),s)  dw^Q(s)}
  \end{array}\]
  By the Burkholder-Davis-Gundy inequality, for any $p\geq 2$
  \[\begin{array}{l}
    \ds{ \E |Y_1^\mu(\sigma)|_{H}^{p}}\\
    \vs
    \ds{ \leq c_p\,\E \left( \int_0^\sigma (\sigma-s)^{-2\alpha} \sum_{k=1}^\infty \le| [\Pi_1 S_\mu^\e(\sigma-s)G_\mu((u(s),0),s) - T_\e(\sigma -s) G_\e(u(s),s)]e_k \right|_{H}^2 ds  \right)^{\frac p2}.}
  \end{array}\]
Now, proceeding as in the proof of Lemma \ref{lem:stoch-conv-Lip}, we have
  \[\begin{array}{l}
  \ds{\sum_{k=1}^\infty \int_0^\sigma (\sigma -s)^{-2\alpha} \left| [\Pi_1 S_\mu^\e(\sigma-s)G_\mu((u(s),0),s) - T_\e(\sigma -s) G_\e(u(s),s)]e_k \right|_H^2 }\\
  \vs
  \ds{ \leq c_{\a,T}\, ( 1 + |u|_{C([0,T];H)}) \sum_{h=1}^\infty \frac 1{\a_h^{1-2\a}}.}
  \end{array}\]
Moreover, according to
 \eqref{Smu-to-Se-3-at-a-point}, for any  fixed $0\leq s < \sigma$ and $k \in \nat$
  \[\lim_{\mu \to 0} \left| [\Pi_1 S_\mu^\e(\sigma-s)G_\mu((u(s),0),s) - T_\e(\sigma -s) G_\e(u(s),s)]e_k \right|_{H} = 0,\ \ \ \Pro-\text{a.s.}\]
  Therefore, by the dominated convergence theorem for any $\sigma \leq T$,
  \[\lim_{\mu \to 0} \E|Y_1^\mu(\sigma)|_H^{p} =0,\]
so that, if $p>4$ there exists $\a \in\,(1/p,1/4)$  such that
  \[|I^\mu_1|_{C([0,T];H)}^{p} \leq \left(\int_0^T \sigma^{\frac{p(\alpha-1)}{p-1}} d\sigma \right)^{p-1} \sup_{s\geq0} \|T_\e(s)\|_{{\cal L}(H)}^{p}  \int_0^T |Y^\mu_1(\sigma)|_H^{p} d\sigma.\]
Due to the dominated convergence theorem, we can conclude that
  \begin{equation}
  \label{s18}
  \lim_{\mu \to 0} \E|I^\mu_1|_{C([0,T];H)}^{p} = 0.\end{equation}

For each $n \in\,\nat$ we rewrite $I^\mu_2(t)$ as
  \[\begin{array}{l}
  \ds{I^\mu_2(t) = \int_0^t (t-\sigma)^{1-\alpha} [\Pi_1 S_\mu^\e(t-\sigma) \Pi_1^\star - T_\e(t-\sigma)] (Y^\mu_2(\sigma) - P_n Y^\mu_{2}(\sigma)) d\sigma}\\
  \vs
  \ds{+\int_0^t (t-\sigma)^{1 - \alpha} [\Pi_1 S_\mu^\e(t-\sigma) \Pi_1^\star - T_\e(t-\sigma)]P_n Y^\mu_{2}(\sigma) d\sigma.}
  \end{array}\]
Therefore, if $\a<1/p$
  \[\begin{array}{l}
    \ds{|I^\mu_2|_{C([0,T];H)}^{p} \leq c_p \left( \int_0^T (T-\sigma)^{\frac{p(\alpha-1)}{p-1}} d\sigma \right)^{p-1}}\\
    \vs
     \ds{\le[\,\sup_{t\geq0} \left(\|\Pi_1 S_\mu^\e(t)\Pi_1^\star\|^p_{{\cal L}(H)} + \|T_\e(t)\|^p_{{\cal L}(H)} \right)\int_0^T |Y^\mu_2(\sigma) - P_n Y^\mu_{2}(\sigma)|_H^{p} d\sigma \r.}\\
     \vs
    \ds{\le.+  \sup_{0 \leq t \leq T} \sup_{|x|_H\leq 1} \left|S_\mu^\e(s)\Pi_1^\star P_n x - T_\e(s)P_n x \right|_H^{p} \int_0^T |P_n Y^\mu_{2}(\sigma)|_H^{p} d \sigma \right]:= I^\mu_{2,1} (n)+ I^\mu_{2,2}(n).}
  \end{array}\]

  Now, as we have seen above for $Y^\mu_1(t)$, for any $n \in\,\nat$ we have
    \begin{equation}
    \label{s20}
    \begin{array}{l}
  \ds{\E|P_n Y^\mu_{2}(\sigma)|_H^{p} \leq c_{p,T}\,  \left(1 + \E\,|u|_{C([0,T];H)}^{p} \right).}
  \end{array}\end{equation}
Moreover, by proceeding as in the proof of Lemma \ref{lem:stoch-conv-Lip}, we have  By the same arguments, we can show that
  \[\begin{array}{l}
  \ds{\E \left|Y^\mu_2(\sigma) - P_n Y^\mu_{2}(\sigma) \right|_H^{p}}\\
  \vs
  \ds{ \leq \frac{c_{p,T}}{\mu^p}\le(1 + \E\,|u|_{C([0,T];H)}^{p} \right)\le(\int_0^\si s^{-2\a}\sum_{h=n+1}^\infty |\Pi_1 S^\e_\mu(s)(0,e_h)|_H^2\,ds\r)^{\frac p2}}\\
  \vs
  \ds{\leq c_{p,T}\,\le(1 + \E\,|u|_{C([0,T];H)}^{p} \right)\le(\sum_{h=n+1}^\infty\frac 1{\a_h^{1-2\a}}\r)^{\frac p2}.}
  \end{array}\]
  Therefore, if $\a<1/4$ we get
  \begin{equation} \label{eq:Y_2-to-Y_2,N}
  \lim_{n \to \infty } \E \left|Y^\mu_2(\sigma) - P_n Y^\mu_{2}(\sigma) \right|_H^{p}=0.
  \end{equation}
Since by \eqref{energy-est-1}, $\|\Pi_1 S_\mu^\e(s) \Pi_1^\star \|_{{\cal L}(H)}$ is uniformly bounded independently of $s$ and $\mu$, this means that for any $\eta>0$there exists  $n_\eta \in\,\nat$ such that
  \[\sup_{\mu>0} \E I^\mu_{2,1}(n_\eta) < \frac{\eta}{2}.\]
  By \eqref{Smu-to-Se-1} and \eqref{s20} we can find $\mu_0>0$ small enough such that
  \[\E I^\mu_{2,2}(n_\eta) < \frac{\eta}{2},\ \ \ \ \mu\leq \mu_0,\]
and then,   since $\eta$ was arbitrary, we can conclude that
  \begin{equation}
  \label{s19}
  \lim_{\mu \to 0} \E |I^\mu_2|_{C([0,T];H)}^{p} = 0.\end{equation}

  It remains to estimate $I^\mu_3(t)$.
By proceeding as in the proof of Lemma \ref{lem:stoch-conv-Lip}, we have
  \[\begin{array}{l}
    \ds{|Y^\mu_3(\sigma)|_H^{p} \leq c_{p,T} \mu^{-\frac{p(1 + 2\alpha)}{2}} \E \left( 1 + |u|_{C([0,T];H)}^{p} \right).}
  \end{array}\]
Then, for $\a<1/p$ we have
  \[|I^\mu_3|_{C([0,T];H)}^{p} \leq \left(\int_0^T \si^{\frac{2m(\alpha-1)}{2m-1}} d\sigma \right)^{p-1} \sup_{t\geq 0} \|\Pi_1 S_\mu^\e(t) \Pi_2^\star\|_{{\cal L}(H)}^{p} \left( \int_0^T |Y^\mu_3(\sigma)|_H^{p} d\sigma\right).\]
  From Lemma \ref{Smu-bound-lem},
  \[\|\Pi_1 S_\mu^\e(s) \Pi_2^\star\|_{{\cal L}(H)}^{p} \leq c \mu^{p}.\]
  Therefore,
  \[\E|I^\mu_3|_{C([0,T];H)}^{2m} \leq c \mu^{\frac{p(1 - 2\alpha)}{2}} \E (1 + |u|^{p}_{C([0,T];H)}),\]
  and we can conclude that
  \[\lim_{\mu \to 0} \E|I^\mu_3|_{C([0,T];H)}^{p} = 0.\]

This, together with \eqref{s18} and \eqref{s19} implies that for any $p>4$
  \[\lim_{\mu\to 0}\E |\Pi_1\Gamma_\mu^\e(u,0) - \Gamma_\e(u)|_{C([0,T];H)}^{p} = 0.\]
  The case $p\geq 1$ is a consequence of the H\"older inequality.
\end{proof}

\begin{Theorem}
\label{ts31}
Let $z^\e_\mu=(u^\e_\mu,v^\e_\mu)$ and $u_\e$ be the mild solutions of problems \eqref{s9} and \eqref{s10}, with initial conditions $z_0 \in\,\H$ and $u_0=\Pi_1 z_0 \in\,H$, respectively.
 Then, under Hypotheses \ref{H2}, \ref{H3} and \ref{H7}, for any $T>0$,  $\e>0$ and $p\geq 1$ we have
  \[\lim_{\mu \to 0}\E |u^\e_\mu - u_\e|_{C([0,T];H)}^{p}=0.\]
\end{Theorem}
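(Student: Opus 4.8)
First I would put both solutions in mild form. Since $B_\mu(z,t)=\frac1\mu(0,B(\Pi_1 z,t))$ and $B_\e=J_\e^{-1}B$, applying $\Pi_1$ to the mild equation for $z_\mu^\e$ gives
\[u_\mu^\e(t)=\Pi_1 S_\mu^\e(t)z_0+\frac1\mu\int_0^t\Pi_1 S_\mu^\e(t-s)(0,B(u_\mu^\e(s),s))\,ds+\Pi_1\Gamma_\mu^\e(z_\mu^\e)(t),\]
while
\[u_\e(t)=T_\e(t)u_0+\int_0^t T_\e(t-s)J_\e^{-1}B(u_\e(s),s)\,ds+\Gamma_\e(u_\e)(t).\]
I would then estimate $u_\mu^\e(t)-u_\e(t)$ by decomposing it into the initial datum term $\Pi_1 S_\mu^\e(t)z_0-T_\e(t)u_0$, a drift term, and a stochastic convolution term, the aim being to arrive at a Gr\"onwall inequality whose constants are independent of $\mu$.

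For the initial datum term, \eqref{Smu-to-Se-at-a-point} already gives $\sup_{t\le T}|\Pi_1 S_\mu^\e(t)z_0-T_\e(t)u_0|_H\to0$. For the drift term I would add and subtract $B(u_\e(s),s)$ and $T_\e(t-s)J_\e^{-1}B(u_\e(s),s)$, so that it splits into the ``Lipschitz'' part $\frac1\mu\int_0^t\Pi_1 S_\mu^\e(t-s)(0,B(u_\mu^\e(s),s)-B(u_\e(s),s))\,ds$ and the remainder $\frac1\mu\int_0^t\Pi_1 S_\mu^\e(t-s)(0,B(u_\e(s),s))\,ds-\int_0^t T_\e(t-s)J_\e^{-1}B(u_\e(s),s)\,ds$. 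The first part is controlled by Lemma \ref{Smu-bound-lem} with $\theta=0$ and $\gamma=1$, which gives $|\Pi_1 S_\mu^\e(t)(0,y)|_H\le 2\mu|y|_H$: the factor $\mu$ cancels the $1/\mu$, and the Lipschitz continuity of $B$ (Hypothesis \ref{H2}) bounds this part, uniformly in $\mu$, by $2\kappa_B(T)\int_0^t|u_\mu^\e(s)-u_\e(s)|_H\,ds$. The remainder is exactly of the form handled by Corollary \ref{cor3.7}, applied with $\psi(s)=B(u_\e(s),s)$, which belongs to $L^p(\Omega;L^p([0,T];H))$ thanks to the linear growth of $B$ and the fact that $u_\e\in L^p(\Omega;C([0,T];H))$; hence it tends to $0$ in $L^p(\Omega;C([0,T];H))$.

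For the stochastic convolution term, using that $\Gamma_\mu^\e$ depends on $z$ only through $\Pi_1 z$, I would write $\Pi_1\Gamma_\mu^\e(z_\mu^\e)(t)-\Gamma_\e(u_\e)(t)$ as $[\Pi_1\Gamma_\mu^\e(z_\mu^\e)(t)-\Pi_1\Gamma_\mu^\e((u_\e,0))(t)]+[\Pi_1\Gamma_\mu^\e((u_\e,0))(t)-\Gamma_\e(u_\e)(t)]$. The $\mu$-uniform estimate \eqref{s30} — which holds precisely because the $1/\mu$ in $G_\mu$ is compensated by the factor $\mu$ coming from $\Pi_1 S_\mu^\e(t)(0,\cdot)$ in \eqref{s2} — bounds the $p$-th moment of the supremum over $[0,t]$ of the first bracket by $c(\e,p,T)\int_0^t\E\sup_{r\le\sigma}|u_\mu^\e(r)-u_\e(r)|_H^p\,d\sigma$, while the second bracket tends to $0$ in $L^p(\Omega;C([0,T];H))$ by Theorem \ref{thm:Gamma-mu-to-Gamma-e} applied with $u=u_\e$.

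Collecting these bounds and raising to the $p$-th power — first for $p$ large enough for the factorization and Burkholder--Davis--Gundy estimates underlying the quoted results to apply — I obtain an inequality $g_\mu(t)\le C_1 R_\mu(T)+C_2\int_0^t g_\mu(\sigma)\,d\sigma$, where $g_\mu(t)=\E\sup_{r\le t}|u_\mu^\e(r)-u_\e(r)|_H^p$ is finite because $z_\mu^\e\in L^p(\Omega;C([0,T];\H))$ and $u_\e\in L^p(\Omega;C([0,T];H))$, the constants $C_1,C_2$ depend only on $\e,p,T$ and $\kappa_B(T)$ but not on $\mu$, and $R_\mu(T)\to0$ as $\mu\to0$. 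Gr\"onwall's lemma then yields $g_\mu(T)\le C_1 R_\mu(T)e^{C_2 T}\to0$, and the case of arbitrary $p\ge1$ follows by H\"older's inequality. The main obstacle is exactly what this bookkeeping circumvents: the direct Lipschitz bounds for $\Gamma_\mu^\e$ (Lemma \ref{lem:stoch-conv-Lip}) and for the drift convolution degenerate like $\mu^{-p}$ as $\mu\to0$, so one cannot close the Gr\"onwall loop with them; the loop must instead be closed through the estimates for the first component only — Lemma \ref{Smu-bound-lem}, \eqref{s30} and Corollary \ref{cor3.7} — where the $1/\mu$ weights are absorbed by the compensating factor $\mu$ carried by $\Pi_1 S_\mu^\e(t)(0,\cdot)$.
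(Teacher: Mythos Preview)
Your proposal is correct and follows essentially the same route as the paper: the same mild-form decomposition into initial datum, a drift Lipschitz part controlled via Lemma~\ref{Smu-bound-lem}, a drift remainder handled by Corollary~\ref{cor3.7}, a stochastic Lipschitz part controlled via \eqref{s30}, and a stochastic remainder handled by Theorem~\ref{thm:Gamma-mu-to-Gamma-e}, all fed into Gr\"onwall. Your explicit remark that the loop must be closed through first-component estimates (where the $1/\mu$ is compensated) is exactly the point the paper is exploiting, and your reduction to large $p$ followed by H\"older matches what the paper leaves implicit.
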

\begin{proof}
We have
  \[u^\e_\mu(t) = \Pi_1 S_\mu^\e(t)(u_0,v_0) + \Pi_1 \int_0^t S_\mu^\e(t-s) B_\mu(z_\mu^\e(s),s) ds + \Pi_1 \Gamma_\mu^\e(z^\e_\mu)(t), \]
and
  \[u_\e(t) = T_\e(t)u_0 + \int_0^t T_\e(t-s) B_\e(u_\e(s),s) + \Gamma_\e(u_\e)(t).\]
  Then
  \[\begin{array}{l}
  \ds{\left|u_\mu^\e(t) - u_\e(t) \right|_H \leq \left| \Pi_1 S_\mu^\e(t) (u_0,v_0) - T_\e(t)u_0\right|_H}\\
  \vs
  \ds{ + \left| \int_0^t \Pi_1 S_\mu^\e(t-s)[B_\mu(z^\e_\mu(s),s) - B_\mu((u_\e(s),0),s)] ds \right|_H }\\
  \vs
  \ds{ + \left| \frac 1\mu\int_0^t \Pi_1 S_\mu^\e(t-s) (0,B(u_\e(s),s))ds - \int_0^t T_\e(t-s) J_\e^{-1} B(u_\e(s),s) ds \right|_H}\\
  \vs
   \ds{+ \left|\Pi_1\le[\Gamma_\mu^\e(z^\e_\mu)(t) - \Gamma_\mu^\e((u_\e(t),0))\r] \right|_{H} + \left|\Pi_1 \Gamma_\mu^\e(u_\e(t),0) - \Gamma_\e(u_\e)(t) \right|_H.}
  \end{array}\]
 By Lemma \ref{Smu-bound-lem}, and Hypothesis \ref{H2}, there is a constant independent of $\mu$ and of  $0<s<t$, such that
  \[\left|\Pi_1 S_\mu^\e(t-s)[B_\mu(z^\e_\mu(s),s) -B_\mu((u_\e(s),0),s)] \right|_H \leq c\, |u_\mu^\e(s) - u_\e(s)|_H, \]
  so that for any $p\geq 2$
  \[\left|\int_0^t \Pi_1 S_\mu^\e(t-s)[B_\mu(z^\e_\mu(s),s) - B_\mu((u_\e(s),0),s)] ds\right|_H^{p}
  \leq c_p\, t^{p-1} \int_0^t |u^\e_\mu-u_\e|_{C([0,s];H)}^{p} ds. \]
Thanks to \eqref{s30}, this implies
  \[\begin{array}{l}
    \ds{\E\left|u^\e_\mu - u_\e \right|_{C([0,t];H)}^{p} \leq c_p\, T^{p-1} \int_0^t \E \left| u^\e_\mu - u_\e \right|_{C([0,s];H)}^{p} ds}\\
    \vs
     \ds{+c_p\,\sup_{s\leq t} \left| \Pi_1 S_\mu^\e(s) (u_0,v_0) - T_\e(t)u_0\right|_H^{p} + c_p\E\left| \Pi_1\Gamma_\mu^\e((u_\e,0)) -
     \Gamma_\e(u_\e) \right|_{C([0,t];H)}^{p} }\\
     \vs
  \ds{ +c_p\,\E\sup_{s \leq t} \left|\frac 1\mu\int_0^s \Pi_1 S_\mu^\e(s-r) (0,B(u_\e(r),r))dr - \int_0^s T_\e(s-r) B_\e(u_\e(r),r) dr \right|_H^{p},}
    \end{array}\]
and the Gr\"onwall's inequality yields
  \[\begin{array}{l}
    \ds{\E \left| u^\e_\mu - u_\e \right|_{C([0,T];H)}^{p} }\\
    \vs
 \ds{   \leq c_p(T)\le( \sup_{s\leq T} \left| \Pi_1 S_\mu^\e(s) (u_0,v_0) - T_\e(t)u_0\right|_H^{p} + \E\left| \Pi_1\Gamma_\mu^\e((u_\e,0)) -
     \Gamma_\e(u_\e) \right|_{C([0,T];H)}^{p} \r)}\\
     \vs
  \ds{ +c_p(T)\,\E\sup_{s \leq T} \left|\frac 1\mu\int_0^s \Pi_1 S_\mu^\e(s-r) (0,B(u_\e(r),r))dr - \int_0^s T_\e(s-r) B_\e(u_\e(r),r) dr \right|_H^{p}.}
  \end{array}\]
  Finally, the result follows because of \eqref{Smu-to-Se-at-a-point}, \eqref{Smu-to-Se-2}, and Theorem \ref{thm:Gamma-mu-to-Gamma-e}.
\end{proof}

\section{The convergence for $\e \downarrow 0$ } \label{sec6}

In the previous sections, we have shown that under suitable conditions on the coefficients and the noise, for any fixed $\e>0$, $T>0$ and $p\geq 1$
\[\lim_{\mu \to 0} \E \left|u^\e_\mu-u_\e  \right|_{C([0,T];H)}^p =0. \]
This limit is not uniform in $\e>0$, and the limit is not true for $\e=0$. In this section we want  to show that
\begin{equation}
\label{limite}
\lim_{\e \to 0} \E \left|u_\e - u \right|_{C([0,T];H)}^p = 0,\end{equation}
where $u$ is the mild solution of the problem
\begin{equation}
\label{s33}
du(t)=\le[A_0 u(t)+B_0(u(t),t)\r]\,dt+G_0(u(t),t)\,dw^Q(t),\ \ \ \ \ u(0)=u_0,\end{equation}
with
\[A_0:=J_0^{-1} A,\ \ \ B_0=J_0^{-1} B,\ \ \ \ G_0=J_0^{-1} G.\]

This statement is true  if we strengthen Hypothesis \ref{H5}. Actually, Hypothesis \ref{H5} is the weakest assumption on the regularity of the noise that implies Theorem \ref{conv-in-mu-thm} and Theorem \ref{ts31}, for $\e>0$. But in order to prove \eqref{limite} we need to assume the following stronger condition on the covariance $Q$.

\begin{Hypothesis} \label{H6}
There exists a non-negative sequence $\{\la_k\}_{k \in\,\nat}$ such that   $Q e_k = \lambda_k e_k$, for any $k \in\,\nat$, and
   \[\sum_{k=1}^\infty \lambda_k^2 < +\infty.\]
\end{Hypothesis}

In what follows, we shall denote by $T_0(t)$, $t\geq 0$, the semigroup generated by the differential operator $A_0$ in $H$, with $D(A_0)=D(A)$.
The semigroup $T_0(t)$ is strongly continuous in $H$. Moreover, if we define $u(t)=T_0(t)x$, for $x \in\,D(A_0)$, we have
\[\le\{\begin{array}{l}
\ds{\frac{\partial u_1}{\partial t}(t)=-\Delta u_2(t),\ \ \ \ u_1(0)=x_1}\\
\vs
\ds{\frac{\partial u_2}{\partial t}(t)=\Delta u_1(t),\ \ \ \ u_2(0)=x_2}
\end{array}\r.\]
This means that if we take the scalar product in $H^\theta$ of the first equation by $u_1$ and of the second equation by $u_2$, we get
\[\frac d{dt}|u(t)|^2_{H^\theta}=0,\]
so that
\begin{equation}
\label{s32}
|T_0(t)x|_{H^\theta}=|x|_{H^\theta},\ \ \ \ t\geq 0,\end{equation}
for any $\theta \in\,\reals$ and $x \in\,H$.

Now, let us consider the stochastic convolution associated with problem \eqref{s33}, in the simple case $G=I$
\[\Gamma(t)=\int_0^t T_0(t-s)Qdw(s),\ \ \ \ t \geq0.\]
As a consequence of \eqref{s32}, we have
\[\E\,|\Gamma(t)|_H^2=\int_0^t\sum_{k=1}^\infty |T_0(s)Qe_k|_H^2\,ds=\int_0^t\sum_{k=1}^\infty |Qe_k|_H^2\,ds=t \sum_{k=1}^\infty \la_k^2,\]
and this implies that Hypothesis \ref{H6} is necessary in order to have a solution in $H$ for the limiting equation \eqref{s33}.

\begin{Lemma}
  The matrix $J_\e^{-1}$ converges to $J_0^{-1}$ in ${\cal L}(\mathbb{R}^2)$. Furthermore,  for any $T\geq0$,
  \begin{equation} \label{A_e^inv-conv-1}
  \lim_{\e \to 0} \sup_{t \in\,[-T,T]} \left\|e^{tJ_\e^{-1}} - e^{t J_0^{-1}} \right\|_{{\cal L}(\mathbb{R}^2)} =0
  \end{equation}
\end{Lemma}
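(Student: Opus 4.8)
The plan is to first establish the norm convergence $J_\e^{-1}\to J_0^{-1}$ by an explicit computation, and then to upgrade it to convergence of the matrix exponentials, exploiting the fact that all the $2\times2$ matrices in sight are polynomials in $J_0$ (hence mutually commuting) together with the elementary bound $\|e^C-I\|\le\|C\|\,e^{\|C\|}$ for matrices $C$.

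First I would write the inverse down explicitly. Since $J_\e=J_0+\e I$ and a direct multiplication gives $J_\e(\e I-J_0)=(1+\e^2)I$ (using $J_0^2=-I$), we have
\[J_\e^{-1}=\frac{\e I-J_0}{1+\e^2}=\frac{1}{1+\e^2}\begin{pmatrix}\e & -1\\ 1 & \e\end{pmatrix}.\]
Comparing with $J_0^{-1}=-J_0$, one gets $J_\e^{-1}-J_0^{-1}=\frac{\e}{1+\e^2}\,(I+\e J_0)$; equivalently, from the resolvent identity $J_\e^{-1}-J_0^{-1}=-\e\,J_\e^{-1}J_0^{-1}$ together with $\|J_\e^{-1}\|_{{\cal L}(\reals^2)}=(1+\e^2)^{-1/2}\le1$ and $\|J_0^{-1}\|_{{\cal L}(\reals^2)}=1$, one obtains the quantitative bound $\|J_\e^{-1}-J_0^{-1}\|_{{\cal L}(\reals^2)}\le\e$. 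This proves the first assertion.

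For \eqref{A_e^inv-conv-1}, the key remark is that $J_\e^{-1}=(\e I-J_0)/(1+\e^2)$ and $J_0^{-1}=-J_0$ are both polynomials in $J_0$, hence commute. Therefore
\[e^{tJ_\e^{-1}}-e^{tJ_0^{-1}}=e^{tJ_0^{-1}}\Big(e^{t(J_\e^{-1}-J_0^{-1})}-I\Big).\]
Now $J_0^{-1}$ is skew-symmetric, so $e^{tJ_0^{-1}}$ is orthogonal and $\|e^{tJ_0^{-1}}\|_{{\cal L}(\reals^2)}=1$ for all $t$; and from the series $e^C-I=\sum_{n\ge1}C^n/n!$ one has $\|e^C-I\|_{{\cal L}(\reals^2)}\le e^{\|C\|_{{\cal L}(\reals^2)}}-1\le\|C\|_{{\cal L}(\reals^2)}\,e^{\|C\|_{{\cal L}(\reals^2)}}$. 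Applying this with $C=t(J_\e^{-1}-J_0^{-1})$ and using $\|J_\e^{-1}-J_0^{-1}\|_{{\cal L}(\reals^2)}\le\e$, we get for every $|t|\le T$
\[\big\|e^{tJ_\e^{-1}}-e^{tJ_0^{-1}}\big\|_{{\cal L}(\reals^2)}\le T\e\,e^{T\e},\]
and the right-hand side tends to $0$ as $\e\to0$, uniformly in $t\in[-T,T]$; this is precisely \eqref{A_e^inv-conv-1}.

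There is no real obstacle here: the statement is elementary precisely because we are in finite dimension and every relevant operator is a function of the single matrix $J_0$. The only point worth flagging is that one should use commutativity, or equivalently the closed form $e^{tJ_\e^{-1}}=e^{t\e/(1+\e^2)}\,e^{-tJ_0/(1+\e^2)}$ that follows from $J_\e^{-1}=\tfrac{\e}{1+\e^2}I-\tfrac{1}{1+\e^2}J_0$; without it one could still run a Duhamel-type comparison $e^{tJ_\e^{-1}}-e^{tJ_0^{-1}}=\int_0^t e^{(t-s)J_0^{-1}}(J_\e^{-1}-J_0^{-1})e^{sJ_\e^{-1}}\,ds$, but this would additionally require the (immediate, from the same closed form) uniform bound $\sup_{|t|\le T,\,0<\e<1}\|e^{tJ_\e^{-1}}\|_{{\cal L}(\reals^2)}\le e^{T}$.
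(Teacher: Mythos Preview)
Your proof is correct. The approach differs slightly from the paper's: after the same explicit computation of $J_\e^{-1}$ and of the difference $J_\e^{-1}-J_0^{-1}$, the paper writes down the closed form
\[
e^{tJ_\e^{-1}} = e^{\frac{\e t}{\e^2+1}} \begin{pmatrix}
   \cos\frac{t}{\e^2+1} & -\sin\frac{t}{\e^2+1}\\
   \sin\frac{t}{\e^2+1} & \cos\frac{t}{\e^2+1}
 \end{pmatrix}
\]
and simply appeals to the uniform continuity of the entries on $[-T,T]$. You instead exploit the commutativity (both matrices are polynomials in $J_0$) to factor $e^{tJ_\e^{-1}}-e^{tJ_0^{-1}}=e^{tJ_0^{-1}}\big(e^{t(J_\e^{-1}-J_0^{-1})}-I\big)$, combine the orthogonality of $e^{tJ_0^{-1}}$ with the series bound $\|e^C-I\|\le\|C\|e^{\|C\|}$, and arrive at the explicit quantitative estimate $T\e\,e^{T\e}$. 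Your route is a bit more abstract but yields a clean rate of convergence and avoids writing out the rotation matrix; the paper's route is shorter but less quantitative. Either argument is perfectly adequate for this elementary lemma.
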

\begin{proof}
Recall that
 \[J_\e^{-1} = \frac{1}{1+ \e^2} \begin{pmatrix} \e & -1 \\ 1 & \e \end{pmatrix}=\frac{\e}{1+\e^2} I+\frac{1}{1+\e^2} J_0^{-1}.\]
 Then,
 \[J_\e^{-1} - J_0^{-1} = \frac{\e}{\e^2+1}I-\frac{\e^2}{1+\e^2}J_0^{-1}\]
 and this means that
 \[\|J_\e^{-1} - J_0^{-1}\|_{{\cal L}(\mathbb{R}^2)} \leq c\,\frac{\e}{\e^2+1}.\]
Moreover, we have
 \[e^{tJ_\e^{-1}} = e^{\frac{\e t}{\e^2+1}} \begin{pmatrix}
   \cos\,\frac{t}{\e^2+1} & -\sin\,\frac{t}{\e^2+1}\\
   \sin\,\frac{t}{\e^2+1} & \cos\,\frac{t}{\e^2+1}
 \end{pmatrix}.\]
 Therefore, limit \eqref{A_e^inv-conv-1} follows and is uniform  with respect to $t \in [-T,T]$.
\end{proof}

\begin{Lemma}
   For any $n \in \nat$, and $T\geq0$,
   \begin{equation} \label{Se-to-S0-1}
     \lim_{\e \to 0}\,\sup_{t  \in\,[0,T]} \sup_{|x|_H \leq 1} \left|T_\e(t)P_{n} x - T_0(t)P_n x \right|_H = 0.
   \end{equation}
\end{Lemma}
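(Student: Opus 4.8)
The plan is to exploit the fact that on the finite-dimensional space $H_n=\span\{e_1,\dots,e_{2n}\}$ both semigroups $T_\e(t)$ and $T_0(t)$ act block-diagonally as $2\times2$ matrix exponentials, and then to reduce \eqref{Se-to-S0-1} to the uniform convergence $e^{sJ_\e^{-1}}\to e^{sJ_0^{-1}}$ established in \eqref{A_e^inv-conv-1}. Since $|P_nx|_H\leq|x|_H$ and $P_nx\in H_n$ for every $x\in H$, it suffices to show that
\[
\lim_{\e\to0}\,\sup_{t\in[0,T]}\,\sup_{x\in H_n,\,|x|_H\leq1}\big|T_\e(t)x-T_0(t)x\big|_H=0.
\]

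First I would observe that for each $k\in\nat$ the two-dimensional block $H^{(k)}:=\span\{(\hat e_k,0),(0,\hat e_k)\}$ is invariant under both $A_\e$ and $A_0$: indeed $A(\hat e_k,0)=-\hat\alpha_k(\hat e_k,0)$ and $A(0,\hat e_k)=-\hat\alpha_k(0,\hat e_k)$, while $J_\e^{-1}$ and $J_0^{-1}$ act only on the $\reals^2$-component, so that in the natural identification $H^{(k)}\simeq\reals^2$ the operator $A_\e=J_\e^{-1}A$ restricted to $H^{(k)}$ is the matrix $-\hat\alpha_k J_\e^{-1}$, and likewise $A_0$ restricted to $H^{(k)}$ is $-\hat\alpha_k J_0^{-1}$. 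Since $H_n=\bigoplus_{k=1}^nH^{(k)}$ is finite-dimensional and contained in the domains of $A_\e$ and $A_0$, a standard uniqueness argument (if $w(\cdot)$ is a classical solution of $w'=A_\e w$ with $w(0)=0$, then $s\mapsto T_\e(t-s)w(s)$ is constant, hence $w\equiv0$) shows that the restrictions of $T_\e(t)$ and $T_0(t)$ to $H_n$ coincide with the exponentials of the bounded restricted generators. In particular, on the block $H^{(k)}$, $T_\e(t)$ acts as $e^{-\hat\alpha_ktJ_\e^{-1}}$ and $T_0(t)$ acts as $e^{-\hat\alpha_ktJ_0^{-1}}$.

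Next, for $x\in H_n$ with $|x|_H\leq1$ I would write $x=\sum_{k=1}^n\big((\hat e_k,0)\,a_k+(0,\hat e_k)\,b_k\big)$, so that $\sum_{k=1}^n(a_k^2+b_k^2)=|x|_H^2\leq1$. Using the orthonormality of $\{e_k\}$ and the block decomposition,
\[
\big|T_\e(t)x-T_0(t)x\big|_H^2=\sum_{k=1}^n\Big|\big(e^{-\hat\alpha_ktJ_\e^{-1}}-e^{-\hat\alpha_ktJ_0^{-1}}\big)(a_k,b_k)\Big|_{\reals^2}^2\leq\omega_n(\e)^2\sum_{k=1}^n(a_k^2+b_k^2)\leq\omega_n(\e)^2,
\]
where $\omega_n(\e):=\sup_{|s|\leq\hat\alpha_nT}\big\|e^{sJ_\e^{-1}}-e^{sJ_0^{-1}}\big\|_{{\cal L}(\reals^2)}$ and I used that $-\hat\alpha_kt\in[-\hat\alpha_nT,0]$ for $t\in[0,T]$ and $k\leq n$. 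Taking the supremum over $t\in[0,T]$ and $|x|_H\leq1$ gives $\sup_{t\in[0,T]}\sup_{|x|_H\leq1}|T_\e(t)P_nx-T_0(t)P_nx|_H\leq\omega_n(\e)$, and by \eqref{A_e^inv-conv-1}, applied with $T$ replaced by $\hat\alpha_nT$, we have $\omega_n(\e)\to0$ as $\e\to0$. This proves \eqref{Se-to-S0-1}.

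The only mildly delicate point is the first step, namely the identification of the restricted semigroups with matrix exponentials on the invariant finite-dimensional blocks; once this is in place, the statement reduces to the elementary computation above, and there is no genuine obstacle. Note that it is essential here that $n$ be fixed: the bound $\omega_n(\e)$ deteriorates as $n\to\infty$, which is exactly why one cannot upgrade this to convergence on all of $H$, consistent with the fact that $T_0(t)$ is an isometry and not analytic.
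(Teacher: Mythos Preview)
Your proof is correct and follows essentially the same approach as the paper: both exploit the block-diagonal action of $T_\e(t)$ and $T_0(t)$ on the two-dimensional spans $\span\{e_{2k-1},e_{2k}\}$ as the matrix exponentials $e^{-\hat\alpha_k t J_\e^{-1}}$ and $e^{-\hat\alpha_k t J_0^{-1}}$, and then invoke \eqref{A_e^inv-conv-1}. The paper's version is terser, treating a single block and then asserting the extension to $H_n$, whereas you spell out the orthogonal decomposition and the uniform bound $\omega_n(\e)$ explicitly; but the underlying idea is identical.
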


\begin{proof}
  If $x \in \span\{e_{2k-1}, e_{2k}\}$, then $T_\e(t)x = e^{-\alpha_{2k} J_\e^{-1}t} x$ and $T_0(t)x = e^{-\alpha_{2k} J_0^{-1}t} x$.
  Therefore, by \eqref{A_e^inv-conv-1},
  \[\lim_{\e \to 0}\, \sup_{t  \in\,[0,T]} \sup_{|x|_H \leq 1} \left| T_\e(t)P_n x - T_0(t)P_nx \right|_H \leq \lim_{\e \to 0} \,\sup_{t \in\,[-T \alpha_{2k},0]} \left\|e^{tJ_\e^{-1}} - e^{t J_0^{-1}} \right\|_{{\cal L}(\mathbb{R}^2)} = 0.\]
As we can extend this result to $\span\{e_k\}_{k=1}^{2n}$, for any $n$, our thesis follows.
\end{proof}

Notice that, as in the proof of Theorem \ref{conv-in-mu-thm}, this implies that for any $x \in\,H$
\begin{equation}
\label{s48}
\lim_{\e\to 0}\,\sup_{t \in\,[0,T]}|T_\e(t)x-T_0(t)x|_H=0.
\end{equation}

Now, as a consequence of \eqref{Se-to-S0-1}, by proceeding as in the proof of Corollary \ref{cor3.7},  we obtain the following result.
\begin{Lemma}
\label{s34}
  For any $\psi \in L^1(\Omega; L^1([0,T];H))$
  \begin{equation} \label{Se-to-S0-integral}
  \lim_{\e \to 0}\E \sup_{t  \in\,[0,T]} \left| \int_0^t \left( T_\e(t-s)J_\e^{-1}\psi(s) - T_0(t-s)J_0^{-1} \psi(s) \right)ds \right|_H = 0.
  \end{equation}
\end{Lemma}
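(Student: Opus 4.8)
The plan is to adapt the proof of Corollary~\ref{cor3.7} to the present situation, where it becomes in fact simpler: the convergence \eqref{Se-to-S0-1} holds uniformly over the whole interval $[0,T]$, so no neighbourhood of the singularity $s=t$ has to be excised and no auxiliary parameter is needed. I introduce the error operator
\[\Psi_\e(t)\,y := T_\e(t) J_\e^{-1} y - T_0(t) J_0^{-1} y,\ \ \ \ y \in\,H,\ \ t\geq 0.\]
The first step is a uniform operator bound: since $\|T_\e(t)\|_{{\cal L}(H)}\leq 1$ by \eqref{Se-bound}, $\|T_0(t)\|_{{\cal L}(H)}=1$ by \eqref{s32}, and $\|J_\e^{-1}\|_{{\cal L}(\reals^2)}\leq 1$, $\|J_0^{-1}\|_{{\cal L}(\reals^2)}=1$, one immediately obtains
\[\sup_{\e>0}\ \sup_{t\in\,[0,T]}\|\Psi_\e(t)\|_{{\cal L}(H)}\leq 2.\]

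The second step is the finite-dimensional convergence
\[\lim_{\e\to 0}\ \sup_{t\in\,[0,T]}\ \sup_{|y|_H\leq 1}|\Psi_\e(t)P_n y|_H=0,\ \ \ \ n\in\,\nat.\]
To see this I decompose $\Psi_\e(t)P_n y = T_\e(t)(J_\e^{-1}-J_0^{-1})P_n y + (T_\e(t)-T_0(t))J_0^{-1}P_n y$; the first term is bounded by $\|J_\e^{-1}-J_0^{-1}\|_{{\cal L}(\reals^2)}\,|y|_H$, which tends to $0$ uniformly in $t$ by \eqref{A_e^inv-conv-1}, while for the second term I note that $J_0^{-1}$ commutes with $P_n$ and maps $H_n$ into $H_n$ (indeed $J_0^{-1}e_{2k-1}=e_{2k}$ and $J_0^{-1}e_{2k}=-e_{2k-1}$), so that $J_0^{-1}P_n y\in\,H_n$ and \eqref{Se-to-S0-1} applies.

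With these two ingredients the truncation argument of Corollary~\ref{cor3.7} goes through verbatim. For $n\in\,\nat$ I set $\psi_n(t)=I_{\{|\psi(t)|_H\leq n\}}P_n\psi(t)$, so that $\psi_n\in\,L^\infty(\Omega\times[0,T];H_n)$ and, by dominated convergence, $\E|\psi_n-\psi|_{L^1([0,T];H)}\to 0$ as $n\to\infty$. Writing
\[\int_0^t\Psi_\e(t-s)\psi(s)\,ds=\int_0^t\Psi_\e(t-s)(\psi(s)-\psi_n(s))\,ds+\int_0^t\Psi_\e(t-s)\psi_n(s)\,ds,\]
the first summand is dominated, uniformly in $t\in\,[0,T]$ and $\e>0$, by $2\,|\psi-\psi_n|_{L^1([0,T];H)}$; given $\eta>0$, I pick $n_\eta$ so that the expectation of this bound is less than $\eta/2$. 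For the second summand, since $|\psi_{n_\eta}(s)|_H\leq n_\eta$ and $\psi_{n_\eta}(s)\in\,H_{n_\eta}$,
\[\sup_{t\in\,[0,T]}\le|\int_0^t\Psi_\e(t-s)\psi_{n_\eta}(s)\,ds\r|_H\leq T\,n_\eta\,\sup_{r\in\,[0,T]}\ \sup_{|y|_H\leq 1}|\Psi_\e(r)P_{n_\eta}y|_H,\]
which vanishes as $\e\to 0$ by the finite-dimensional convergence of the second step, hence its expectation is less than $\eta/2$ for $\e$ small enough. Since $\eta$ is arbitrary, \eqref{Se-to-S0-integral} follows. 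The argument presents no real obstacle; the only points deserving care are the uniform bound of the first step, which crucially uses that $T_\e(t)$ and $T_0(t)$ are contraction (not analytic) semigroups, and the elementary remark that $J_0^{-1}$ preserves the subspaces $H_n$, which is what allows \eqref{Se-to-S0-1} to be invoked.
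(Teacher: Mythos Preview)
Your proof is correct and follows exactly the approach the paper indicates, namely adapting the proof of Corollary~\ref{cor3.7} using \eqref{Se-to-S0-1}. Your observation that the $\delta$-splitting of the integral is unnecessary here---because \eqref{Se-to-S0-1} gives convergence uniformly on all of $[0,T]$ rather than only away from $t=0$---is correct and makes the argument cleaner than the original corollary; the remaining details (the uniform operator bound, the invariance of $H_n$ under $J_0^{-1}$, and the $L^1$ truncation) are all accurate.
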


Now, $u_\e$ is the unique mild solution in $L^p(\Omega;C([0,T];H)$ of problem \eqref{first-abst} (in the case of additive noise) or problem \eqref{s10} (in the case of multiplicative noise), so that
\[u_\e(t)=T_\e(t)u_0+\int_0^t T_{\e}(t-s)B_\e(u_\e(s),s)\,ds+\Gamma_\e(u_\e)(t).\]
Moreover, $u(t)$ is the unique mild solution in $L^p(\Omega;C([0,T];H))$ of the problem
\[du(t)=\le[A_0 u(t)+B_0(u(t),t)\r]\,dt+G_0(u(t),t)\,dw^Q(t),\ \ \ \ u(0)=u_0,\]
with
$G_0=J_{0}^{-1}I$ or $G_0=J_0^{-1} G$, so that
\[u(t)=T_0(t)u_0+\int_0^t T_{0}(t-s)B_0(u_\e(s),s)\,ds+\Gamma_0(u_\e)(t).\]

Then, in view the previous two lemmas, we have that the arguments used in the proof of Theorem \ref{conv-in-mu-thm} and Theorems \ref{thm:Gamma-mu-to-Gamma-e} and \ref{ts31} can be repeated and we have the following result.

\begin{Theorem}
\label{ts56}
Assume either $G$ satisfies Hypothesis \ref{H3} or $G(x,t)=I$. Then, under Hypotheses \ref{H2} and \ref{H6}, we have that for any $T>0$ and $p\geq 1$
\begin{equation}
\label{s35}
\lim_{\e\to 0}\,\E\,|u_\e-u|^p_{C([0,T];H)}=0.
\end{equation}
\end{Theorem}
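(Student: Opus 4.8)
The plan is to run the same Gr\"onwall-plus-factorization scheme that was used for Theorems \ref{conv-in-mu-thm} and \ref{ts31}, but now comparing the $\e$-regularized first order equation with the limiting problem \eqref{s33}, using the semigroup convergences \eqref{s48}, \eqref{Se-to-S0-1} and Lemma \ref{s34} in place of the $\mu\to 0$ convergences of Section \ref{sec3}, and using the $\e$-uniform Lipschitz estimate for the stochastic convolution from Lemma \ref{lem:stoch-conv-epsilon-Lip}, which is available precisely because Hypothesis \ref{H6} gives $\sum_k\la_k^2<\infty$. Concretely, I would start from the mild identities
\[u_\e(t)=T_\e(t)u_0+\int_0^t T_\e(t-s)J_\e^{-1}B(u_\e(s),s)\,ds+\Gamma_\e(u_\e)(t),\]
\[u(t)=T_0(t)u_0+\int_0^t T_0(t-s)J_0^{-1}B(u(s),s)\,ds+\Gamma_0(u)(t),\]
subtract, and in the drift and in the noise terms insert the intermediate quantities $\int_0^t T_\e(t-s)J_\e^{-1}B(u(s),s)\,ds$ and $\Gamma_\e(u)(t)$, respectively.

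For the drift, the term $\int_0^t T_\e(t-s)J_\e^{-1}[B(u_\e(s),s)-B(u(s),s)]\,ds$ is bounded in $H$ by $c\int_0^t|u_\e(s)-u(s)|_H\,ds$ thanks to $\|T_\e(t)\|_{\mathcal L(H)}\le 1$ (from \eqref{Se-bound}), $\|J_\e^{-1}\|_{\mathcal L(\reals^2)}\le 1$ and Hypothesis \ref{H2}; this is absorbed by Gr\"onwall. The term $\int_0^t[T_\e(t-s)J_\e^{-1}-T_0(t-s)J_0^{-1}]B(u(s),s)\,ds$ tends to $0$ in $L^p(\Omega;C([0,T];H))$ by Lemma \ref{s34}, applied with $\psi(s)=B(u(s),s)$, which lies in $L^1(\Omega;L^1([0,T];H))$ by the linear growth of $B$ and $u\in L^p(\Omega;C([0,T];H))$. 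The initial term $T_\e(t)u_0-T_0(t)u_0$ tends to $0$ uniformly on $[0,T]$ by \eqref{s48}. For the noise, $\Gamma_\e(u_\e)-\Gamma_\e(u)$ is controlled by Lemma \ref{lem:stoch-conv-epsilon-Lip}, whose constant is $\e$-independent under Hypothesis \ref{H6}, giving $\E|\Gamma_\e(u_\e)-\Gamma_\e(u)|^p_{C([0,t];H)}\le c\int_0^t\E|u_\e-u|^p_{C([0,s];H)}\,ds$, again absorbed by Gr\"onwall.

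The genuinely new estimate, and the main obstacle, is $\Gamma_\e(u)-\Gamma_0(u)\to 0$. Here $T_0$ is only an isometry (not analytic), so there is no parabolic smoothing and one cannot bound the convolution by a spatial norm of the noise as for $\e>0$; instead I would use the factorization method exactly as in Theorem \ref{thm:Gamma-mu-to-Gamma-e}. Picking a suitable $\alpha\in(1/p,1/2)$ (resp. $\alpha\in(1/p,1/4)$ in the multiplicative case, so that $\sum_h\alpha_h^{-(1-2\alpha)}<\infty$ in $d=1$; hence $p>2$, resp. $p>4$), write
\[\Gamma_\e(u)(t)-\Gamma_0(u)(t)=\frac{\sin\pi\alpha}{\pi}\int_0^t(t-\sigma)^{\alpha-1}\Big[T_\e(t-\sigma)\big(Z^\e(\sigma)-Z^0(\sigma)\big)+\big(T_\e(t-\sigma)-T_0(t-\sigma)\big)Z^0(\sigma)\Big]d\sigma,\]
with $Z^\e(\sigma)=\int_0^\sigma(\sigma-s)^{-\alpha}T_\e(\sigma-s)J_\e^{-1}G(u(s),s)\,dw^Q(s)$ and $Z^0$ defined with $T_0,J_0^{-1}$. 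By the Burkholder--Davis--Gundy inequality, $\E|Z^\e(\sigma)-Z^0(\sigma)|_H^p$ is dominated by the $p/2$-th moment of $\int_0^\sigma(\sigma-s)^{-2\alpha}\sum_k\big|[T_\e(\sigma-s)J_\e^{-1}-T_0(\sigma-s)J_0^{-1}]G(u(s),s)e_k\big|_H^2\,ds$; each summand tends to $0$ as $\e\to 0$ by \eqref{s48} and $J_\e^{-1}\to J_0^{-1}$, and is dominated using $\|T_\e\|_{\mathcal L(H)}\le 1$, \eqref{s32}, $\|J_\e^{-1}\|\le 1$, the linear growth of $G$ and Hypothesis \ref{H6} (together with Lemma \ref{l7} in the additive case), so the dominated convergence theorem yields $\sup_{\sigma\le T}\E|Z^\e(\sigma)-Z^0(\sigma)|_H^p\to 0$. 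For the second summand I would split $Z^0(\sigma)=P_nZ^0(\sigma)+(I-P_n)Z^0(\sigma)$: on the range of $P_n$ one uses \eqref{Se-to-S0-1} and the uniform-in-$\e$ bound $\sup_{\sigma\le T}\E|P_nZ^0(\sigma)|_H^p<\infty$, while on the complement one uses $\|T_\e(t)-T_0(t)\|_{\mathcal L(H)}\le 2$ together with $\lim_{n\to\infty}\sup_{\sigma\le T}\E|(I-P_n)Z^0(\sigma)|_H^p=0$, the latter because $Z^0$ carries the tail $\sum_{k>n}$ of a series summable by Hypothesis \ref{H6}; a routine $\eta/2$ argument and integration in $\sigma$ against $(t-\sigma)^{\alpha-1}$ then give $\E|\Gamma_\e(u)-\Gamma_0(u)|^p_{C([0,T];H)}\to 0$.

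Collecting everything, the mild identity and Gr\"onwall's lemma give
\[\E|u_\e-u|^p_{C([0,T];H)}\le c_p(T)\Big(\sup_{t\le T}|T_\e(t)u_0-T_0(t)u_0|_H^p+\E\sup_{t\le T}\big|\int_0^t[T_\e(t-s)J_\e^{-1}-T_0(t-s)J_0^{-1}]B(u(s),s)\,ds\big|_H^p+\E|\Gamma_\e(u)-\Gamma_0(u)|^p_{C([0,T];H)}\Big),\]
and letting $\e\to 0$ yields \eqref{s35} for $p$ large enough; the case $p\ge 1$ follows by H\"older's inequality. The only delicate point, as anticipated in the Introduction, is the stochastic-convolution step $\Gamma_\e(u)\to\Gamma_0(u)$: the absence of smoothing for $T_0$ forces the factorization argument, and the $\e$-uniform dominating function for the noise series can be produced only under $\sum_k\la_k^2<\infty$, which is exactly why the stronger Hypothesis \ref{H6} (rather than Hypothesis \ref{H5} or \ref{H7}) is imposed in this section.
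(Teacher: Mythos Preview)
Your proposal is correct and follows exactly the approach the paper indicates: the paper's own proof is the single sentence ``in view of the previous two lemmas, the arguments used in the proof of Theorem \ref{conv-in-mu-thm} and Theorems \ref{thm:Gamma-mu-to-Gamma-e} and \ref{ts31} can be repeated,'' and you have carried out precisely that repetition, with the Gr\"onwall--factorization scheme, the semigroup convergences \eqref{s48}, \eqref{Se-to-S0-1}, Lemma \ref{s34}, and the $\e$-uniform version of Lemma \ref{lem:stoch-conv-epsilon-Lip} under Hypothesis \ref{H6}, spelled out in the right places.
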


We conclude this section by showing that the convergence result proved above for $\e\downarrow 0$ is also valid for the second order system, that is for every $\mu>0$ fixed.

\begin{Theorem}
\label{ts55}
Assume either $G$ satisfies Hypothesis \ref{H3} or $G(x,t)=I$. Then, under Hypotheses \ref{H2} and \ref{H6}, we have that
  for any initial conditions $(u_0,v_0)$ and $\mu>0$
  \[\lim_{\e \to 0} \E\,|z_\mu^\e - z_\e|_{C([0,T];\H)}^{2m},\]
  for any $T>0$ and $p\geq 1$.
\end{Theorem}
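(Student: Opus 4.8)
The plan is to repeat the scheme of Theorems \ref{conv-in-mu-thm}, \ref{thm:Gamma-mu-to-Gamma-e} and \ref{ts31}, but now comparing $S_\mu^\e(t)$ with the group $S_\mu(t)$ generated by $A_\mu$ instead of with $T_\e(t)$; the whole point is that this comparison is \emph{much easier} than the small-mass one, since here $\mu>0$ is fixed. Indeed $A_\mu^\e=A_\mu-\tfrac\e\mu P$, where $P(u,v)=(0,v)$ is bounded on every $\H_\d$ with $\|P\|_{\mathcal L(\H_\d)}\leq1$, so the bounded-perturbation (Duhamel) identity gives
\[S_\mu^\e(t)=S_\mu(t)-\frac\e\mu\int_0^t S_\mu(t-s)\,P\,S_\mu^\e(s)\,ds,\]
and, since \eqref{energy-est-1} with $\theta=0$ yields $\|S_\mu^\e(r)\|_{\mathcal L(\H)}\vee\|S_\mu(r)\|_{\mathcal L(\H)}\leq C_\mu$ for all $r\geq0$ and all $\e\geq0$, with $C_\mu$ depending only on $\mu$, we obtain $\sup_{t\leq T}\|S_\mu^\e(t)-S_\mu(t)\|_{\mathcal L(\H)}\leq C_\mu^2\e T/\mu\to0$. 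This is genuine operator-norm convergence on $\H$, so --- unlike in Section \ref{sec3} --- no finite-rank reduction of the semigroups will be needed.

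The first real step is to show that, for every fixed $v\in L^p(\Omega;C([0,T];\H))$,
\[\lim_{\e\to0}\E\bigl|\Gamma_\mu^\e(v)-\Gamma_\mu(v)\bigr|_{C([0,T];\H)}^p=0,\qquad \Gamma_\mu(v)(t):=\int_0^t S_\mu(t-s)G_\mu(v(s),s)\,dw^Q(s)\]
(a deterministic convolution when $G=I$). As in Lemma \ref{lem:stoch-conv-Lip} I would use the factorization formula with $\a\in(1/p,1/4)$, split $\Gamma_\mu^\e(v)-\Gamma_\mu(v)$ into a piece carrying the difference of the inner convolutions $Z^\e-Z^0$ (premultiplied by the bounded operator $S_\mu^\e(t-\si)$) and a piece carrying $S_\mu^\e(t-\si)-S_\mu(t-\si)$ (applied to the bounded process $Z^0$), and estimate both by the Burkholder--Davis--Gundy inequality. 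Both pieces are controlled by $\sup_{r\leq T}\|S_\mu^\e(r)-S_\mu(r)\|_{\mathcal L(\H)}$ times finite constants; the finiteness of those constants is exactly where Hypothesis \ref{H6} (trace class, via $\sum_k|G_\mu(v(s),s)Qe_k|_\H^2\leq c_\mu(1+|\Pi_1v(s)|_H)^2\sum_k\la_k^2$) and, in the multiplicative case, the summability of $\{\a_k^{-(1-2\a)}\}$ and the bound $\sup_k|e_k|_\infty<\infty$, both of which require $d=1$, come into play. The case $1\leq p\leq4$ then follows by H\"older's inequality, and for $G=I$ the argument simplifies, since $\Gamma_\mu^\e(t)$ is Gaussian and $\sum_k|Q_\mu e_k|_\H^2<\infty$ under \ref{H6}.

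The argument is then closed by a Gr\"onwall estimate on $z_\mu^\e-z_\mu$: writing both in mild form and subtracting,
\begin{align*}
z_\mu^\e(t)-z_\mu(t)
&=\bigl(S_\mu^\e(t)-S_\mu(t)\bigr)z_0
+\int_0^t\bigl(S_\mu^\e(t-s)-S_\mu(t-s)\bigr)B_\mu(z_\mu^\e(s),s)\,ds\\
&\quad+\int_0^t S_\mu(t-s)\bigl[B_\mu(z_\mu^\e(s),s)-B_\mu(z_\mu(s),s)\bigr]\,ds\\
&\quad+\bigl(\Gamma_\mu^\e(z_\mu^\e)(t)-\Gamma_\mu^\e(z_\mu)(t)\bigr)
+\bigl(\Gamma_\mu^\e(z_\mu)(t)-\Gamma_\mu(z_\mu)(t)\bigr).
\end{align*}
The Lipschitz continuity of $B_\mu$ on $\H$ (Hypothesis \ref{H2}) and that of $\Gamma_\mu^\e$ --- the latter with a constant that can be taken \emph{uniform in} $\e\in[0,1]$, since Lemma \ref{lem:stoch-conv-Lip} only relies on the energy estimates \eqref{energy-est-1}, \eqref{energy-est-2} and \eqref{s2}, which remain valid at $\e=0$ --- allow one to absorb the drift difference and the fourth term into the left-hand side by Gr\"onwall, while the first two terms are killed by $\sup_{t\leq T}\|S_\mu^\e(t)-S_\mu(t)\|_{\mathcal L(\H)}\to0$ together with the (standard) $\e$-uniform a priori bound $\sup_{\e\in(0,1]}\E\sup_{t\leq T}|z_\mu^\e(t)|_\H^p<\infty$, and the last term by the convolution convergence of the previous step applied with $v=z_\mu$. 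The main obstacle is purely one of bookkeeping: one must re-derive the estimates of Section \ref{sec3} in $\e$-uniform form --- in particular versions of \eqref{bound-1} and \eqref{bound-3} obtained by splitting the time integral at $\a_k^{-1}$ and using the conservation identities \eqref{energy-est-1}--\eqref{energy-est-2} in place of the $\e$-dissipation --- and run them with the non-analytic group $S_\mu$ instead of $T_\e$. No genuinely new analytic difficulty arises, because the singular behaviour in this problem is confined to the limit $\mu\to0$, not to $\e\to0$.
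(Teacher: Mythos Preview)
Your proposal is correct and follows essentially the same route as the paper. The paper's proof is organised exactly around the bounded-perturbation identity you write down: it observes (in the lemma immediately following the theorem) that $S_\mu^\e(t)z-S_\mu^0(t)z=-\tfrac{\e}{\mu}\int_0^t S_\mu^\e(t-s)(0,\Pi_2 S_\mu^0(s)z)\,ds$ and, working in the equivalent norm $|(x,y)|_{\H(\mu)}^2=|x|_H^2+\mu|y|_{H^{-1}}^2$ in which both groups are contractions, deduces $\|S_\mu^\e(t)-S_\mu^0(t)\|_{\mathcal L(\H(\mu))}\leq \e t/\mu$; the remainder of the argument is then delegated to the scheme of Theorems \ref{conv-in-mu-thm} and \ref{ts31}, just as you outline.
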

As long as we can show that $S^\e_\mu(t) P_n z\to S^0_\mu(t) P_n z$ for any fixed $n$, we can prove Theorem \ref{ts55} by following the arguments of Theorems  \ref{conv-in-mu-thm} and \ref{ts31}. Fortunately, we can prove something stronger. We show that $ \sup_{t \geq0}\|S^\e_\mu(t) - S^0_\mu(t)\|_{\mathcal{L}(\H)}=0.$

To this purpose, we introduce an equivalent norm  on $H \times H^{-1}$ by setting
\[|(x,y)|_{\H(\mu)}^2 = |x|_H^2 + \mu|y|_{H^{-1}}^2.\]
Because of \eqref{energy-est-1}, for any $\e\geq0$,
\begin{equation} \label{eq:Smu-contraction}
  \sup_{t \geq 0}\| S_\mu^\e(t)\|_{{\cal L}(\H)} \leq 1.
\end{equation}
Note that if $\e=0$, then, by \eqref{energy-est-1}, for any $z \in \H$ and $t \geq0$,
\[|S_\mu^0(t) z|_{\H(\mu)} = |z|_{\H(\mu)}.\]
\begin{Lemma}
  For fixed $\mu>0$ and  $T>0$,
  \begin{equation} \label{eq:Smue-to-Smu0-strongly}
    \lim_{\e \to 0} \sup_{t \in\,[0,T]} \|S_\mu^\e(t) - S_\mu^0(t) \|_{{\cal L}(\H(\mu))} =0.
  \end{equation}
\end{Lemma}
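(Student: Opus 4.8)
The plan is to exploit that, for fixed $\mu>0$, the operator $A_\mu^\e$ is a \emph{bounded} perturbation of the generator $A_\mu^0$, so that $S_\mu^\e(t)$ and $S_\mu^0(t)$ are linked by a variation of constants formula, and then to close the resulting estimate in the equivalent norm $|\cdot|_{\H(\mu)}$, which is precisely the norm in which $S_\mu^0(t)$ acts as an isometry (by \eqref{energy-est-1} with $\theta=0$ and $\e=0$). Concretely, for $(u,v)\in\H_1=D(A_\mu^\e)=D(A_\mu^0)$ one has
\[A_\mu^\e(u,v)-A_\mu^0(u,v)=\frac1\mu\bigl(0,-\e v\bigr)=-\frac{\e}{\mu}\,P(u,v),\qquad P(u,v):=(0,v),\]
and $P$ extends to a bounded operator on $\H$, with $|P(x,y)|_{\H(\mu)}^2=\mu|y|_{H^{-1}}^2\leq|(x,y)|_{\H(\mu)}^2$. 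Hence $A_\mu^\e$ differs from $A_\mu^0$ by a bounded operator, and the bounded perturbation theorem for strongly continuous semigroups (see \cite{pazy}) gives, for every $z\in\H$ and $t\geq0$,
\begin{equation}
\label{duhamel-plan}
S_\mu^\e(t)z-S_\mu^0(t)z=-\frac{\e}{\mu}\int_0^t S_\mu^0(t-s)\bigl(0,\Pi_2 S_\mu^\e(s)z\bigr)\,ds;
\end{equation}
one first checks \eqref{duhamel-plan} for $z\in\H_1$ by differentiating $s\mapsto S_\mu^0(t-s)S_\mu^\e(s)z$, and then extends it to all of $\H$ by density, using the boundedness of $P$ and the local boundedness of the two semigroups.

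Next I would estimate the right-hand side of \eqref{duhamel-plan} in the norm $|\cdot|_{\H(\mu)}$. Since $S_\mu^0(t)$ preserves this norm and $|(0,w)|_{\H(\mu)}=\sqrt\mu\,|w|_{H^{-1}}$, we get
\[\bigl|S_\mu^\e(t)z-S_\mu^0(t)z\bigr|_{\H(\mu)}\leq\frac{\e}{\sqrt\mu}\int_0^t\bigl|\Pi_2 S_\mu^\e(s)z\bigr|_{H^{-1}}\,ds.\]
Applying now \eqref{energy-est-1} to $S_\mu^\e$ itself, again with $\theta=0$, yields $\mu\,|\Pi_2 S_\mu^\e(s)z|_{H^{-1}}^2\leq|z|_{\H(\mu)}^2$ for every $s\geq0$, so that
\[\bigl|S_\mu^\e(t)z-S_\mu^0(t)z\bigr|_{\H(\mu)}\leq\frac{\e}{\sqrt\mu}\cdot t\cdot\frac1{\sqrt\mu}\,|z|_{\H(\mu)}=\frac{\e t}{\mu}\,|z|_{\H(\mu)},\qquad t\geq0.\]
Taking the supremum over $t\in[0,T]$ and over $|z|_{\H(\mu)}\leq1$ gives
\[\sup_{t\in[0,T]}\bigl\|S_\mu^\e(t)-S_\mu^0(t)\bigr\|_{{\cal L}(\H(\mu))}\leq\frac{\e T}{\mu}\longrightarrow0\quad\text{as }\e\to0,\]
which is exactly \eqref{eq:Smue-to-Smu0-strongly}. (If a bound uniform in $t\geq0$ were desired, one could combine \eqref{duhamel-plan} with the dissipativity estimate $2\e\int_0^t|\Pi_2 S_\mu^\e(s)z|_{H^{-1}}^2\,ds\leq|z|_{\H(\mu)}^2$ from \eqref{energy-est-1} and Cauchy--Schwarz, obtaining $\sup_{t\geq0}\|S_\mu^\e(t)-S_\mu^0(t)\|_{{\cal L}(\H(\mu))}\leq(\e/2\mu)^{1/2}$.)

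I do not expect any genuine obstacle here: this is a textbook bounded-perturbation estimate, and the proof is forced once one makes the two observations above — that $A_\mu^\e-A_\mu^0$ is bounded on $\H$, so that \eqref{duhamel-plan} holds on all of $\H$ rather than only on $D(A_\mu^\e)$, and that the estimate should be run in the norm $|\cdot|_{\H(\mu)}$ in which $S_\mu^0$ is isometric (in the ambient norm $|\cdot|_\H$ one would only have the uniform bound $\|S_\mu^0(t)\|_{{\cal L}(\H)}\leq1$ from \eqref{eq:Smu-contraction}, which still works but makes the bookkeeping heavier). The only point requiring a line of care is the density argument justifying \eqref{duhamel-plan} on $\H$; it can be avoided altogether if one is content to estimate $\|S_\mu^\e(t)-S_\mu^0(t)\|_{{\cal L}(\H(\mu))}$ as the supremum of $|S_\mu^\e(t)z-S_\mu^0(t)z|_{\H(\mu)}$ over the dense set $\{z\in\H_1:|z|_{\H(\mu)}\leq1\}$.
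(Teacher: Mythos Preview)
Your proof is correct and follows essentially the same approach as the paper: both derive a Duhamel formula linking $S_\mu^\e(t)$ and $S_\mu^0(t)$ via the bounded perturbation $A_\mu^\e-A_\mu^0=-\tfrac{\e}{\mu}P$ and then estimate in the $\H(\mu)$-norm to obtain the explicit bound $\tfrac{\e t}{\mu}$. The only cosmetic difference is that the paper uses the symmetric variant $S_\mu^\e(t)z-S_\mu^0(t)z=-\tfrac{\e}{\mu}\int_0^t S_\mu^\e(t-s)\bigl(0,\Pi_2 S_\mu^0(s)z\bigr)\,ds$ (obtained by writing the PDE for $\gamma_\mu^\e=u_\mu^\e-u_\mu^0$ as an inhomogeneous equation driven by $S_\mu^\e$), whereas you place $S_\mu^0$ on the outside; both lead to the same final estimate.
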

\begin{proof}
If we fix $z \in \H$ and define
$u^\e_\mu(t) = \Pi_1 S_\mu^\e(t)z$, then
  \[\mu \frac{\partial^2 u_\mu^\e}{\partial t^2} (t) + J_\e \frac{\partial u_\mu^\e}{\partial t}(t) = \Delta u_\mu^\e(t).\]
  Therefore, if we define $\gamma_\mu^\e(t) = u_\mu^\e(t) - u_\mu^0(t)$, we have
  \[\mu \frac{\partial^2 \gamma_\mu^\e}{\partial t^2}(t) + J_\e \frac{\partial u_\mu^\e}{\partial t}(t) - J_0 \frac{\partial u_\mu^0}{ \partial t}(t) = \Delta \gamma_
  \mu^\e(t). \]
Since $J_\e - J_0 = \e I$, we have
  \[J_\e \frac{\partial u_\mu^\e}{\partial t}(t) - J_0 \frac{\partial u_\mu^0}{ \partial t} = J_\e \frac{\partial \gamma_\mu^\e}{\partial t} + \e \frac{\partial u_\mu^0}{\partial t}, \]
so that
  \[\mu \frac{\partial^2 \gamma_\mu^\e}{\partial t^2}(t) + J_\e \frac{\partial \gamma_\mu^\e}{\partial t}(t) = \Delta \gamma_\mu^\e(t) - \e \frac{\partial u_\mu^0}{\partial t}(t),\ \ \ \gamma_\mu^\e(0) = \frac{\partial \gamma_\mu^\e}{\partial t} (0) = 0. \]
This yields
  \begin{equation} \label{eq:Smue-minus-Smu0}
    \begin{array}{l}
      \ds{S_\mu^\e(t)z - S_\mu^0(t)z = \left(\gamma_\mu^\e(t), \frac{\partial \gamma_\mu^\e}{\partial t}(t) \right) = - \frac  \e\mu \int_0^t S_\mu^\e(t-s)\le(0,  \frac{\partial u_\mu^0}{\partial s}(s) \r)ds}\\
      \ds{ = -  \frac{\e}{\mu} \int_0^t S_\mu^\e(t-s) \le(0,S_\mu^0(s)z \r)ds.}
    \end{array}
  \end{equation}
  Then, by \eqref{eq:Smu-contraction}, since  $|(0,\Pi_2 z)|_{\H(\mu)} \leq |z|_{\H(\mu)}$, we conclude
  \[\begin{array}{l}
    \ds{\left| S_\mu^\e(t)z - S_\mu^0(t)z \right|_{\H(\mu)} \leq \frac{\e}{\mu} \int_0^t \left|(0, \Pi_2 S_\mu^0(s) z) \right|_{\H(\mu)}}ds \leq \frac{\e\, t}{\mu} |z|_{\H(\mu)},
  \end{array}\]
and \eqref{eq:Smue-to-Smu0-strongly} follows.
\end{proof}

\end{document}